\newtheorem{remark}{Remark}
\newtheorem{theorem}{Theorem}
\newtheorem{definition}[theorem]{Definition}
\newtheorem{lemma}[theorem]{Lemma}
\newtheorem{proposition}[theorem]{Proposition}
\newtheorem{corollary}[theorem]{Corollary}
\def\T{\hat{T}}
\def\g {\gamma}
\newcommand{\sgn}{\mbox{\rm sgn}}							
\newcommand{\var}{\mbox{\rm var}}								
\newcommand{\ind}{\mathbbm{1}}
\title{Number of visits in arbitrary sets for $\phi$-mixing dynamics}
\author{Sandro Gallo}
\address{Sandro Gallo: Departamento de Estat\'istica, Universidade Federal de S\~ao Carlos, CEP: 13565-905, S\~ao Paulo, Brasil}
\email{sandro.gallo@ufscar.br}
\author{Nicolai Haydn}
\address{Nicolai Haydn: Mathematics Department, USC, Los Angeles, 90089-2532.}
\email{nhaydn@math.usc.edu}
\author{Sandro Vaienti}
\address{Sandro Vaienti: Aix Marseille Université, Université de Toulon, CNRS, CPT, 13009 Marseille, France}
\email{vaienti@cpt.univ-mrs.fr}
\date{\today}
\begin{document}
\maketitle

\begin{abstract}
It is well-known that, for sufficiently mixing dynamical systems, the number of visits to balls and cylinders of vanishing measure is approximately Poisson compound distributed in the Kac scaling. Here we extend this kind of results when the target set is an arbitrary set with vanishing measure in the case of $\phi$-mixing systems. The error of approximation in total variation is derived using Stein-Chen method. An important part of the paper is dedicated to examples to illustrate the assumptions, as well as applications to temporal synchronisation of $g$-measures.  \end{abstract}

\tableofcontents


\section{Introduction}\label{sec:intro}
The  recurrence in small sets, which could be seen alternatively as a {\em rare} or {\em extreme} event, turned out to have very rich probabilistic features and established itself as a major statistical property of dynamical systems. We consider in this paper the general situation of a measurable deterministic dynamical system and try to characterise the distribution of the number of visits to sets whose measure will tend to zero. Since the probability to visit the set coincides with its measure for ergodic systems, one should normalise the length of the trajectory with the measure of the set, in order to get meaningful asymptotic distributions. We called it, in the paper, the Kac's scaling. If the system looses memory fast enough in the future, which is achieved with relatively strong mixing properties, the number of visits of a trajectory of length $n$ tends to follow a binomial distribution $B(n,p_n),$ where $p_n$ is the measure of the small set. Kac's scaling requires that the product $np_n$ equals asymptotically the constant $t$ and therefore one gets a Poisson law  of parameter $t$ in the limit of large $n$ for the number of visits up to time $\frac{t}{p_n}.$ The implementation of this heuristic  argument for a given measure preserving dynamical system, requires not only mixing properties, as we said above, but also some control on the {\em nature} of the small sets. When the map acts on a metric space, the small set is usually taken as  a ball around a given point $z$ and with radius shrinking to zero. The {\em nature} of the point $z$ could change the limit distribution. Suppose in fact that $z$ is a periodic point; even if the system is mixing, the orbits starting or passing close to $z$ tend to sojourn for a longer time in the small set. This produces an effect of {\em clusterization} which will alter the Poisson law  into a more general {\em compound Poisson distribution}.

The aim of the present paper is to obtain such results for measurable dynamical systems and for a wide class of small sets. The latter are obtained by fixing an initial measurable generating partition and by taking its  backward (and eventually forward for invertible systems), join. An arbitrary countable disjoint union of elements of the join of order $n$ will be a small set $U_n.$ We will also assume that the sequence $\{U_n\}_{n \ge1}$ is nested and that it converges to a set of measure zero. The asymptotic {distribution} of successive visits {to} $U_n$ will be assured by requiring that the invariant  measure is $\phi,$ or $\psi-$mixing with respect to the initial partition.

First of all we proceed to adapt  the Stein-Chen method \cite{chen2005stein, barbour1992compound, stein1986approximate, roos1994stein} to compare a given probability measure, in our case the distribution of the number $W_n$ of visits to a set,  to a compound Poisson distribution.  This will give us an error for the total variation distance between the two distribution. Any compound Poisson distribution depends upon a set of parameters $\lambda_l, l\ge 1.$ It has been shown in \cite{haydn2020limiting}, that those parameters are related to another sequence $\alpha_l, l\ge 1,$ (see Section \ref{sec:asympt}) which quantify the distribution of higher order returns. Whenever the limits defining the $\alpha_l$ exist and the latter verify a summable condition, the error term given by the Stein-Chen method will go to zero, and therefore we recover the expected convergence to a compound Poisson law: this is the content of the main result, Theorem \ref{main.theorem}. Applications to concrete examples  basically require to check two conditions on the system: (i) first of all the $\phi,$ or $\psi-$mixing property, which enters the estimate of the error in the Stein-Chen approach; (ii) secondly the existence and summability of the $\alpha_l$, which instead depend on the system and on the choice of the nested sequence of small sets $U_n$. A similar program was carried over in \cite{haydn2020limiting}, with a few substantial differences which in particular imply that the  examples given in the present paper cannot be covered by the theory developed in \cite{haydn2020limiting}. The latter targets $C^2$ local diffeomorphisms on smooth manifolds and satisfying a few geometrical and metric  conditions, among which the most relevant are: a)  local hyperbolicity  and distortion; b) the annulus-type condition which allows to control the relative measure of the neighborhoods of the small sets; and finally c) the decay of correlations which is stated in terms of Lipschitz against $L^{\infty}$ norms. The technique of the proof of \cite{haydn2020limiting} was different from the Stein-Chen' used here and it had a more geometric flavour, adapted to differentiable dynamical systems. In particular
it was possible to handle partially hyperbolic maps and synchronisation of coupled map lattices. In the latter case and for the invariant absolutely continuous
measure, it has been established that the returns to the diagonal is compound Poisson distributed
where the coefficients are given by certain integrals along the diagonal. This example is reconsidered in this paper and compared with a different way to collect points close to each other in the attempt to synchronise two or more trajectories. In the spirit of the present paper, a neighborhood of the diagonal will be constructed with the  elements of the join partition of increasing order, also called cylinders. As we said above, cylinders and union of cylinders will be our small sets. If the dynamical system is encoded in  a symbolic space, we could transport our theory in the domain of symbolic dynamics and cover new panels of examples which are unattainable with the previous geometric approach. Among the applications investigated in the paper, we quote here the {\em House of cards process}, for which the distribution of the number of visits to runs of length above a given threshold is found to be P\'olya-Aeppli, and a class of (not necessarily Markovian) regenerative processes for which we compute explicitly the parameters of the compound Poisson distribution. In particular  we exhibit the existence of the quantity $\alpha_1$ which takes on a particular role in {\em extreme value theory}, where it coincides with   the {\em extremal index}.

 An important part of the paper is dedicated to $g$-measures (see Section \ref{sec:examples2}). These are equilibrium states with normalized potentials of the form $\phi=\log g,$ where $g$ is the $g$-function \cite{keane/1972}. These objects form the counterpart, in the dynamical system setting, of the (possibly long memory)  stochastic processes. For this class of models, we give mild sufficient conditions (strict positivity and summable variation), allowing us to apply our theorem for the number of visits in cylinders around periodic points. It has been recently shown \cite{abadi/cardeno/gallo/2015} that for a particular class of $g$-measures called {\em renewal measures}, it is possible to show that the limit defining the extremal index does not exist even though the measure is $\phi$-mixing and this was due to   an essential discontinuity of $g$ in a given point. Here we will prove that uniform continuity is enough for the existence of the extremal index and, by discussing an example due to Furstenberg and Furstenberg, that the lack of continuity of $g$  does not prevent the existence of the parameter, leading to a P\'olya-Aeppli distribution around any periodic point. We will {then} consider a  decreasing cover by cylinders  of the diagonal in the $m$-dimensional product space where a given $g$-measure is seen as the coupling of the $m$ coordinates $g$-measures.  This will allow us to study the synchronisation of the coordinates, what we called {\em temporal synchronisation for $g$-measures}. In the general case where the coordinate $g$-measures are not independent, we will show the converge to a P\'olya-Aeppli distribution whose parameter is related to the topological pressure of a given potential, see Theorem \ref{theo:tempsync}. It is interesting to observe that whenever the coordinate $g$-measures are independent (the uncoupling case), the previous parameter can be expressed in terms of the Renyi entropy of order $m-1.$ We also address the more general question of the interaction of possibly distinct $g$-measures and we propose two ways to construct such an interaction.

Finally, in a discussion section on synchronisation, we highlight a difference between the geometric approach of \cite{haydn2020limiting} and the symbolic approach of the present work. In a simple example of two uncoupled identical deterministic dynamical systems, we show that the asymptotic distribution of synchronisations is P\'olya-Aeppli when we target the diagonal by cylinder sets (symbolic approach) but it is not P\'olya-Aeppli when we target the diagonal by tubular neighbourhoods (geometric approach). We conclude the discussion by considering yet another situation, of two uncoupled copies of the same Markov chain on $[0,1]$, with strong ergodicity conditions. We show that the asymptotic synchronisation always follows a pure Poisson distribution, meaning that there is no clustering phenomenon in that setting.\\

We previously compared our achievements with the results obtained in the  paper \cite{haydn2020limiting}; several other contributions deserve to be quoted and we will give here a brief survey of them. We should first remind  the seminal papers by   \cite{pitskel/1991} and   \cite{hirata/1993} who showed that generic points have, in the limit, Poisson distributed return times if one uses
cylinder neighbourhoods, while  at periodic points the return times distribution has a point
mass at the origin which corresponds to the periodicity of the point. This dichotomy inspired  and originated several successive works:  it  was proved in \cite{abadi/2001} for $\phi$-mixing systems in the symbolic setting, and in \cite{haydn2009distribution} for more general classes of dynamical
systems with various kind of mixing properties. The latter paper derived also the  error terms for the convergence to the limiting compound Poissonian distribution.  The extension to $\psi$-mixing shifts was given in \cite{kifer2014poisson}; for $\phi$-mixing systems a recent contribution is provided in \cite{kifer2018geometric}. The Chen-Stein method, which is at the base of the actual work, was firstly used in \cite{haydn2014return} for $\phi$-mixing measures and cylinder sets. A complementary approach to the statistics of the number of visits, has been developed in the framework of extreme value theory, where it is more often called point process, or particular kinds of it as the marked point process associated to extremal observations corresponding to exceedances of high
thresholds. See for instance \cite{freitas2013compound, freitas2018convergence, freitas2020point} for applications to deterministic and random dynamical systems, and the book \cite{lucarini2016extremes} for a panorama and an account
on extreme value theory and point processes applied to dynamical systems. The distribution of the number of visits to vanishing balls has been studied for systems modeled by a Young tower: in \cite{chazottes2013poisson} for the H\'enon attractor, in \cite{pene2016poisson}  for some nonuniformly hyperbolic
invertible dynamical systems, in \cite{Haydn2016Limiting} and \cite{haydn2017derivation} for polynomially decaying correlations.
Recurrence in billiards provided recently  several new contributions; for planar billiards in \cite{pene2010back} and \cite{freitas2014convergence}; in  \cite{pene2020spatio} the  spatio-temporal Poisson processes was obtained from recording not
only the successive times of visits to a set, but also the positions.

The paper is structured as follows. We directly state the main results in Section \ref{sec:results}, and follow with a discussion concerning assumptions and examples in Section \ref{sec:discussion}. Next, Section \ref{sec:examples2} specializes to the  case of $g$-measures. We provide one further discussion on synchronization in Section \ref{sec:geovscyl}, and conclude with Sections \ref{sec:proofs} and \ref{sec:remaining_results} containing  the proofs of all the results.

\section{Main results}\label{sec:results}

\noindent Let $T$ be a measurable map on a measure space $\Omega$ and 
$\mu$ a $T$-invariant measure on $\Omega$. Moreover let $\mathcal{A}$ be
a countable measurable partition on $\Omega$ and denote by $\mathcal{A}^n
 =\bigvee_{j=0}^{n-1}T^{-j}\mathcal{A}$ be the joins of $\mathcal{A}$.
 In the two-sided case when the map $T$ is invertible then the $n^{\text{th}}$ join is
 $\mathcal{A}^n=\bigvee_{j=-n+1}^{n-1}T^{-j}\mathcal{A}$.
We assume that  $\mathcal{A}$ is {\em generating}, that is $\mathcal{A}^\infty$ consists
of singletons. 
For every measurable set $U$ we will denote by $\mu_U$ with $\mu(U)>0$  the  measure conditioned on
 (the points starting in) $U$, that is  $\mu_U(A):=\frac{\mu(U\cap A)}{\mu(U)}$. As usual, for any collection of sets $\mathcal B$ we denote by $\sigma(\mathcal B)$ the smallest $\sigma$-algebra generated by $\mathcal B$.

 \subsection{Distribution of the number of visits in a fixed set $U$}
 
Initially, our  interest will be to characterise the distribution of the number of visits to sets with small measure. To this end, we define for any fixed set $U$ and for any $t>0$ the $\mathbb N$-valued random variable 
\begin{equation}\label{eq:W}
W=\sum_{i=0}^{t/\mu(U)}\ind_{U}\circ T^i\,,
\end{equation}
 which counts the number of visits to $U$ in the Kac's scaling $t/\mu(U)$. Although $W$ depends on $t$ and $U$, we do not explicit this dependence in the notation for the sake of simplicity.  Our first theorem gives an upper bound on the total variation distance\footnote{The total variation between two probability distributions $P$ and $Q$ on some measurable space $(\Omega,\mathcal F)$ is defined as $||P(\cdot)-Q(\cdot)||=\sup_{B\in\mathcal F}|P(B)-Q(B)|$.} between $\mu(W\in\cdot)$ and a {compound Poisson distribution $\tilde\nu$ with parameters $t\tilde\lambda_\ell ,\ell\ge1$}, that is, a probability distribution with generating function 
\begin{equation}\label{eq:poiscomp}
\varphi_{\tilde\nu}(z)=e^{\sum_{k\ge1}t\tilde\lambda_k\left(e^{zk}-1\right)}.
\end{equation}
Naturally,  the parameters $\tilde\lambda_\ell,\ell\ge1$ will depend on the dynamic. 

\vspace{0.2cm}

We stand in the world of $\phi$-mixing measures. 

\begin{definition}\label{def:phi}
We say  a $T$-invariant probability measure $\mu$ on $\Omega$ is {\em left $\phi$-mixing} 
with respect to the partition $\mathcal{A}$ if there exists a decreasing sequence
$\phi(k)\searrow0$ so that for every $n, m\ge1$, $ U\in\sigma(\mathcal{A}^n)$ and  
$V\in\sigma(\bigcup_{m=1}^\infty\mathcal{A}^m)$:
 \begin{equation}\label{def:phi_left}
 \left|\frac{\mu(U\cap T^{-n-k}V)}{\mu(U)}-\mu(V)\right|\le\phi(k).
 \end{equation}
 Similarly we say  $\mu$ is {\em right $\phi$-mixing} if under the same conditions
\begin{equation}\label{def:phi_right}
 \left|\frac{\mu(U\cap T^{-n-k}V)}{\mu(V)}-\mu(U)\right|\le\phi(k).
 \end{equation}
 \end{definition}

For any $i\ge1$ and any $1\le K\le i$, the variable counting the number of visits to $U$ at a distance less or equal to $K$ around $i$ is
\[
Z^{(K)}_i:=\sum_{j=i-K}^{i+K}\ind_{U}\circ T^j.
\]
{For $x\in \Omega$ denote by $A_j(x)$ the the unique atom of $\mathcal{A}^j$ which contains $x$.}
More generally for a set $U\subset \Omega$ we put for its outer $j$-cylinder approximation of $U$ ($j\ge1$) 
\begin{equation}\label{eq:outer_approx}
U^j:=A_j(U)=\bigcup_{A\in\mathcal{A}^j, A\cap U\not=\varnothing}A.
\end{equation}
{Similarly for $U\subset\Omega$ and and integer then, for $j<n$, we also define the $n$-right $j$-cylinder approximation by:
\begin{equation}\label{eq:tilde_approx}
\tilde{U}^j_n=T^{-(n-j)}A_j(T^{n-j}U)
=T^{-(n-j)}\bigcup_{A\in\mathcal{A}^j, A\cap T^{n-j}U\not=\varnothing}A.
\end{equation}
In the case when $U\in\sigma(\mathcal{A}^n)$ (union of $n$-cylinders) then we shall write below $\tilde{U}^j$
for $\tilde{U}^j_n$}\\
{(In Remark 5 we will give an example of a null set whose $n$-right $j$-cylinder approximation is the 
entire space for all $j<n/2$.)}
We are now ready to state our first main result where we denote by $\phi^1(\ell)=\sum_{j=\ell}^\infty\phi(j)$ 
the tail sum of $\phi$.
\begin{theorem}\label{theorem.compound.poisson}
Let $\mu$ be a $T$-invariant probability measure on $\Omega$ which is right $\phi$-mixing with  $\phi$ summable.
Then there exists a constant $C_1$ so that, for any measurable set $U\in \sigma(\mathcal{A}^n)$, any $t>0$
and any $K<t/\mu(U)$, we have 
\begin{equation}\label{eq:th1}
||\mu(W\in\cdot)-\tilde\nu_{K,U}||_{TV}\le C_1t\inf_{K<\Delta<t/\mu(U)} \!\left(K\frac{\phi(\Delta-n)}{\mu(U)}+\Delta\mu(U)
+\phi^1(K/2)+\sum_{j=K/2}^n\mu(U^j)\right),
\end{equation}
{($U^j$ as defined in \eqref{eq:outer_approx})} where 
 $\tilde\nu_{K,U}$ is the compound Poisson distribution with parameters
$t\tilde\lambda_\ell({K,U})$, $\ell\ge1$, where
\begin{equation}\label{eq:tildelambda}
\tilde\lambda_\ell{(K,U)}:=\frac{1}{\ell}\mathbb{E}\left(\ind_{Z^{(K)}_i=\ell}|\ind_{U}\circ T^i=1\right)\,\,,\,\,\,\forall i\ge K.
\end{equation}
{If we assume left $\phi$-mixing instead of right $\phi$-mixing, the same statement holds after 
replacing the $j$-cylinder approximations $U^j$ by the $n$-right $j$-cylinder approximations $\tilde U^j$ (as defined in \eqref{eq:tilde_approx}) of
$U\in\sigma(\mathcal{A}^n)$. }

\end{theorem}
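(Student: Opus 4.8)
The plan is to apply the Stein--Chen method for compound Poisson approximation to the array of indicator random variables $X_i = \ind_U\circ T^i$, $0\le i\le t/\mu(U)$, where the "clump" at location $i$ is formed by the block $Z^{(K)}_i$ of visits within distance $K$. First I would set up the declumping: decompose $W = \sum_i X_i$ and identify the local contribution $\sum_\ell \ell\, \ind_{\{\text{a clump of size }\ell\text{ starts at }i\}}$. The natural candidate parameters are precisely $t\tilde\lambda_\ell(K,U)$ with $\tilde\lambda_\ell(K,U) = \frac1\ell\, \mu(Z^{(K)}_i=\ell \mid X_i=1)$, since $\mathbb E W \approx t$ and the conditional law of the clump size given a visit occurs is what distributes mass among the $\lambda_\ell$'s. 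I would invoke the standard compound-Poisson Stein bound (from \cite{barbour1992compound, roos1994stein}) which controls $\|\mu(W\in\cdot)-\tilde\nu_{K,U}\|_{TV}$ by three types of terms: (i) a "local" term measuring how badly the block $Z^{(K)}_i$ fails to decouple from nearby blocks at range $\le\Delta$ (the window between the clump size $K$ and a larger buffer $\Delta$); (ii) a "long-range" term measuring dependence beyond distance $\Delta$, which is exactly where right $\phi$-mixing enters and produces the $K\phi(\Delta-n)/\mu(U)$ and $\Delta\mu(U)$ contributions after multiplying by the number $\sim t/\mu(U)$ of indices and the block length; and (iii) an error from the fact that our chosen $\tilde\lambda_\ell$ from a single reference index $i$ may not be consistent across all indices, plus the error in truncating the clump at window $K$ rather than letting it run — this last piece is controlled by $\phi^1(K/2)$ together with the "return into the cylinder approximation" sum $\sum_{j=K/2}^n \mu(U^j)$.

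The core analytic step, and the one I expect to be the main obstacle, is the estimate of type (iii): bounding the probability that there is a visit to $U$ at some time strictly between $K/2$ and (roughly) $\Delta$ away from a reference visit, i.e. the "medium-range recurrence" that would merge two clumps the Stein coupling wants to treat as independent. Because $U$ is an \emph{arbitrary} union of $n$-cylinders rather than a single cylinder or a metric ball, one cannot use a clean annulus/distortion argument; instead one must pass through the outer $j$-cylinder approximations $U^j$ and write $\mu(U\cap T^{-s}U)\le \mu(U^j\cap T^{-s}U)$ for an appropriate $j\approx s$, then apply right $\phi$-mixing with the gap $s-n$ to split $\mu(U^j\cap T^{-s}U)\lesssim \mu(U^j)\mu(U)(1+\phi(s-n))$, and finally sum over $s$ in the relevant range. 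Summing $\mu(U^j)$ over $j$ from $K/2$ to $n$ is exactly what yields the last term in \eqref{eq:th1}; the tail $\phi^1(K/2)$ absorbs the error contributions from the mixing defects across all the gaps $\ge K/2$. The geometric decrease of the window means only scales $j\ge K/2$ matter, which is why the sum starts there.

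Optimizing over the free buffer parameter $\Delta$ with $K<\Delta<t/\mu(U)$ gives the infimum in \eqref{eq:th1}, and collecting the constants from the Stein bound into a single $C_1$ completes the right $\phi$-mixing case. For the \emph{left} $\phi$-mixing variant, the only change is which end of the block one "looks forward" from: left $\phi$-mixing \eqref{def:phi_left} controls $\mu(U\cap T^{-n-k}V)/\mu(U)$, so the cylinder approximation must be taken on the image side, i.e. one approximates $T^{n-j}U$ by $j$-cylinders and pulls back — precisely the $n$-right $j$-cylinder approximation $\tilde U^j$ of \eqref{eq:tilde_approx}. Every estimate where $U^j$ appeared (the medium-range recurrence bound and the declumping truncation error) is then rerun verbatim with $\tilde U^j$ in place of $U^j$, using \eqref{def:phi_left} in place of \eqref{def:phi_right}, and the identical bound \eqref{eq:th1} results with $U^j\mapsto\tilde U^j$. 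I do not expect any new difficulty in the left case beyond this bookkeeping swap.
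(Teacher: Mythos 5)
Your proposal follows essentially the same route as the paper's proof: the Stein--Chen setup with the close-range/gap/principal decomposition $W=Z_i+V_i+Y_i$, the resulting three error terms (gap term, long-range mixing term of size $\phi(\Delta-n)/\mu(U)$, and the $\Delta\mu(U)$ consistency term), the medium-range recurrence estimate via the outer cylinder approximations $U^j$ summed from $K/2$ to $n$ with the mixing defects absorbed into $\phi^1(K/2)$, the optimization over the buffer $\Delta$, and the observation that the left $\phi$-mixing case only requires replacing $U^j$ by $\tilde U^j$ in that single estimate. The only slips are cosmetic: in the medium-range bound the paper approximates at scale $j/2$ so that the mixing gap is $j/2$ (not $s-n$) for lags up to $2n$, and right $\phi$-mixing yields the additive bound $\mu(U)\left(\mu(U^{j/2})+\phi(j/2)\right)$ rather than the multiplicative form you wrote, but neither affects the structure or the conclusion.
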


\vspace{0.2cm}

We now consider the case in which we have a stronger kind of mixing called $\psi$-mixing. 

\begin{definition}
We say a $T$-invariant probability measure $\mu$ on $\Omega$ is {\em $\psi$-mixing} 
with respect to the partition $\mathcal{A}$ if there exists a decreasing sequence
$\psi(k)\searrow0$ so that for every $n, m\ge1$, $ U\in\sigma(\mathcal{A}^n)$ and  
$V\in\sigma(\bigcup_{m=1}^\infty\mathcal{A}^m)$:
 $$
 \left|\frac{\mu(U\cap T^{-n-k}V)}{\mu(U)\mu(V)}-1\right|\le\psi(k).
 $$
 \end{definition}

This stronger assumption naturally yields a stronger result. 
\begin{theorem}\label{corollary.compound.poisson_psi}
Let $\mu$ be a $T$-invariant probability measure on $\Omega$ which is $\psi$-mixing
where $\psi(j)\to 0$ as $j\to\infty$.
Then there exists a constant $C_1'$ so that, for any measurable set $U\in \sigma(\mathcal{A}^n)$, 
any $t>0$ and any $K<t/\mu(U)$, one has
\begin{equation}\label{eq:cor1}
\|\mu(W\in\cdot)-\tilde\nu_{K,U}\|_{TV}
\le
C_1't\inf_{K<\Delta<t/\mu(U)} \!\left(\psi(\Delta-n)+\Delta\mu(U)+\sum_{j=K/2}^n\mu(U^j)\right),
\end{equation}
where 
 $\tilde\nu_{K,U}$ is the compound Poisson distribution with parameters $t\tilde\lambda_\ell({K,U}),\ell\ge1$ given by~\eqref{eq:tildelambda}. 
 
{ By symmetry of $\psi$-mixing, the same inequality holds with $\tilde U^j$ instead of $U^j$ on the RHS. }
\end{theorem}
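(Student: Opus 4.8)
The plan is to re-run the proof of Theorem~\ref{theorem.compound.poisson} and, at every point where the $\phi$-mixing inequality \eqref{def:phi_right} (or \eqref{def:phi_left}) is used, to replace it by the stronger multiplicative $\psi$-mixing inequality. The first thing I would record is that $\psi$-mixing implies both left and right $\phi$-mixing, and moreover that in the right-mixing estimate one may use the $U$-dependent rate $\mu(U)\psi(k)$ in place of $\phi(k)$: from $\bigl|\mu(U\cap T^{-n-k}V)/(\mu(U)\mu(V))-1\bigr|\le\psi(k)$ one gets $\bigl|\mu(U\cap T^{-n-k}V)-\mu(U)\mu(V)\bigr|\le\mu(U)\mu(V)\psi(k)$, which improves \eqref{def:phi_left}--\eqref{def:phi_right} by a factor $\mu(U)$, respectively $\mu(V)$. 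Since this inequality is symmetric in its two factors, any estimate that in Theorem~\ref{theorem.compound.poisson} relied on the outer $j$-cylinder approximation $U^j$ (because right $\phi$-mixing was applied ``from the left'') can be carried out equally well with the $n$-right $j$-cylinder approximation $\tilde U^j$; this yields the last sentence of the statement.

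Next I would set up the same blocking and Stein--Chen scheme as in Section~\ref{sec:proofs}: fix $\Delta\in(K,t/\mu(U))$, cut the orbit window $\{0,\dots,\lfloor t/\mu(U)\rfloor\}$ into roughly $t/(\Delta\mu(U))$ blocks of length $\Delta$, and record in each block the windowed counts $Z^{(K)}_i$ of \eqref{eq:tildelambda}, which is precisely the decomposition for which the $\tilde\lambda_\ell(K,U)$ are the cluster-size parameters. The compound-Poisson Stein--Chen estimate then bounds $\|\mu(W\in\cdot)-\tilde\nu_{K,U}\|_{TV}$ by three kinds of error: a local term coming from index pairs lying in the same block or at distance $\le\Delta$; a cylinder-approximation term coming from index pairs at distance $\le n$, where the two relevant $n$-cylinders overlap and must be coarsened to scale-$j$ cylinders; and a long-range term coming from the decoupling of far-apart blocks. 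The first two terms I would treat verbatim as in Theorem~\ref{theorem.compound.poisson}, obtaining $Ct\Delta\mu(U)$ and $Ct\sum_{j=K/2}^n\mu(U^j)$ (or $\mu(\tilde U^j)$); these use nothing beyond $\phi$-mixing.

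The point where $\psi$-mixing genuinely pays off---and where the summability of the mixing rate becomes unnecessary---is the long-range term. For a block-occupation event $B$ and the $\sigma$-algebra $\mathcal G$ generated by the blocks at distance $\ge\Delta-n$ from it, $\psi$-mixing gives the \emph{pointwise} bound $\bigl|\mu(B\mid\mathcal G)-\mu(B)\bigr|\le\mu(B)\,\psi(\Delta-n)$; summing over blocks and using $\sum_B\mu(B)\le\mathbb E(W)\le t+1$ yields the single term $Ct\,\psi(\Delta-n)$, whereas the additive $\phi$-peeling of Theorem~\ref{theorem.compound.poisson} had to sum $\phi$ over all gaps $\ge K/2$ and left the residual $\phi^1(K/2)$. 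Likewise, the medium-range contributions at distances between $K$ and $\Delta$, which under $\phi$-mixing produced the $\phi^1(K/2)$ term, are now bounded by $\mu(U)\sum_{d=K}^{\Delta}\bigl(1+\psi(d-n)\bigr)\le\Delta\mu(U)\bigl(1+\psi(K/2-n)\bigr)$ and thus absorbed into the local term. Taking the infimum over $\Delta$ then gives \eqref{eq:cor1}. As in the $\phi$-mixing case, the hard part is not conceptual but organizational: pinning down exactly which index pairs feed which of the three error terms, handling the overlap region $d\le n$ through the cylinder approximations, and verifying that $\tilde\nu_{K,U}$ is a bona fide probability distribution with the announced parameters. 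Since all of this is shared with the proof of Theorem~\ref{theorem.compound.poisson}, the only genuinely new ingredient is the multiplicative decoupling above, and it is exactly what removes both the summability hypothesis on the mixing rate and the $\phi^1(K/2)$ term from the bound.
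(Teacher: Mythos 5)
Your proposal is correct and follows essentially the same route as the paper: it re-runs the Stein--Chen decomposition into the gap term, the long-range term and the local term, and replaces the additive $\phi$-estimates by the multiplicative bound $|\mu(U\cap T^{-n-k}V)-\mu(U)\mu(V)|\le\mu(U)\mu(V)\psi(k)$, which is precisely what absorbs the former $\phi^1(K/2)$ contribution into $\sum_{j=K/2}^n\mu(U^j)+\Delta\mu(U)$ and removes the $1/\mu(U)$ factor from the long-range term, yielding $Ct\,\psi(\Delta-n)$. Your closing observation that the symmetry of the $\psi$-mixing inequality allows $U^j$ to be replaced by $\tilde U^j$ also matches the paper's remark.
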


\subsection{Asymptotic distribution of the number of visits in a nested sequence $\{U_n\}_{n\ge1}$}\label{sec:asympt}

Now we will consider nested sequences of measurable sets $U_1\supset U_{2}\supset\ldots$ satisfying $\mu(U_n)\rightarrow0$. We will denote by $\Gamma$ the limiting null-set. Our interest is to study the convergence in distribution of 
\[
W_n:=\sum_{i=0}^{t/\mu(U_n)}\ind_{U_n}\circ T^i
\]
as $n$ diverges, for any $t>0$. 

Naturally, it is expected that, if in the Poisson compound approximations of the preceding theorems  the involved  parameters \eqref{eq:tildelambda} converge and we can further control the error terms, then we would have a Poisson compound distribution in the limit, parametrised by the limiting parameters. The statement of such a result  needs some more definitions on the entry/return time probabilities and the corresponding limiting quantities. 

For a subset $U\subset\Omega$ we define the first entry/return time
 $\tau_U$ by $\tau_U(x)=\min\{j\ge1: T^jx\in U\}$. Similarly we get higher order
 returns by defining recursively $\tau_U^\ell(x)=\tau_U^{\ell-1}+\tau_U(T^{\tau_U^{\ell-1}}(x))$
 with $\tau_U^1=\tau_U$. We also write $\tau_U^0=0$ on $U$. 
 
 We now come back to our nested sequence of sets $U_n,n\ge1$ and define (provided the limits exist) for $k,L,n\ge1$
\begin{align}\label{eq:alpha}
\nonumber\alpha_k(L,U_n)&:=\mu_{U_n}(\tau_{U_n}^{k-1}\le L<\tau_{U_n}^{k})\\
\nonumber\alpha_k(L)&:=\lim_{n\to\infty}\alpha_k(L,U_n)\\
\alpha_k&:=\lim_{L\to\infty}\alpha_k(L).
\end{align}

As promised, using Theorems \ref{theorem.compound.poisson} and  \ref{corollary.compound.poisson_psi}, and under proper further assumptions, we establish that the limiting distribution of the number of visits to the $U_n$'s is asymptotically compound Poisson. 
 
 \begin{theorem}\label{main.theorem}
Consider a nested sequence of sets $U_n\in \sigma(\mathcal{A}^n),n\ge1$, converging to a null-set $\Gamma$.
Suppose the  $T$-invariant probability measure $\mu$ satisfies: 
\begin{enumerate}
\item {either $\psi$-mixing, or right $\phi$-mixing with $\phi$ summable}, 
\item {there exists a vanishing sequence of positive real numbers $a_k,k\ge1$ such that $\sum_{i=k}^n\mu(U_n^i)\le a_k$ for all sufficiently large $n$'s,}
\item $\sum_{k=1}^\infty k^2\alpha_k<\infty$ (and naturally that the $\alpha_k$, $k\ge1$, exist, see \eqref{eq:alpha}).
\end{enumerate}
Then, for
%
%
%
%
 every $E\subset \mathbb{N}_0$ one has 
$$
\mu(W_n\in E)\longrightarrow\tilde\nu(E)
$$
as $n\to\infty$, where $\tilde\nu$ is the \emph{compound Poisson distribution with parameters 
$t\tilde\lambda_\ell$, $\ell\ge1$} and
\[
\tilde\lambda_k:=\alpha_k-\alpha_{k+1}.
\]
{If in assumption (1) we rather assume left $\phi$-mixing with $\phi$ summable, then we have to change $U^i_n$ to $\tilde U^i_n$ in (2), and the same statement applies . }
\end{theorem}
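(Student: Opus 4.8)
The plan is to feed the nested sequence into the finite-$n$ estimates of Theorem~\ref{theorem.compound.poisson} (under right $\phi$-mixing, with its left-$\phi$ variant) or Theorem~\ref{corollary.compound.poisson_psi} (under $\psi$-mixing), let the two free parameters $K$ and $\Delta$ depend on $n$ so that the total-variation error vanishes, and then prove separately that the approximating compound Poisson laws $\tilde\nu_{K_n,U_n}$, whose parameters are $t\,\tilde\lambda_\ell(K_n,U_n)$, converge to the claimed limit $\tilde\nu$ with parameters $t(\alpha_\ell-\alpha_{\ell+1})$. The conclusion then follows from the triangle inequality for $\|\cdot\|_{TV}$.

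\emph{Step 1 (killing the error term).} I would first record that assumption~(2) already forces $n\mu(U_n)\to0$: for $i\le n$ the cylinder approximation $U_n^i$ is coarser than $U_n=U_n^n$, so $\mu(U_n^i)\ge\mu(U_n)$, hence $(n-k+1)\mu(U_n)\le\sum_{i=k}^n\mu(U_n^i)\le a_k$ for every fixed $k$ and all large $n$; letting $n\to\infty$ and then $k\to\infty$ gives $n\mu(U_n)\to0$. One may therefore choose $n<\Delta_n$ with $\Delta_n\mu(U_n)\to0$ and $\Delta_n-n\to\infty$, together with a sequence $K_n\to\infty$ growing slowly enough, compared with the decay of $\phi$ and of $\mu(U_n)$, that $K_n\phi(\Delta_n-n)/\mu(U_n)\to0$. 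With such a choice the first two terms in \eqref{eq:th1} vanish, $\phi^1(K_n/2)\to0$ because $\phi$ is summable, and $\sum_{j=K_n/2}^n\mu(U_n^j)\le a_{K_n/2}\to0$ by assumption~(2); the $\psi$-mixing bound \eqref{eq:cor1} is handled identically (and more easily). Hence $\|\mu(W_n\in\cdot)-\tilde\nu_{K_n,U_n}\|_{TV}\to0$. For left $\phi$-mixing one invokes the corresponding version of Theorem~\ref{theorem.compound.poisson}, in which $U_n^j$ is replaced by $\tilde U^j_n$, and uses assumption~(2) with $\tilde U^i_n$.

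\emph{Step 2 (identifying the limiting parameters).} The technical core is to show, for each fixed $\ell\ge1$, that $\tilde\lambda_\ell(K_n,U_n)=\tfrac1\ell\,\mu_{U_n}\!\big(Z^{(K_n)}_0=\ell\big)\to\alpha_\ell-\alpha_{\ell+1}$ as $n\to\infty$. Using assumption~(2) to bound the probability of \emph{short} returns to $U_n$ (at times up to some slowly growing $g_n$, controlled by $\sum_j\mu(U_n^j)$) together with $\phi$- (or $\psi$-) mixing and $K_n\mu(U_n)\to0$ to bound the probability of \emph{intermediate} returns (times in $(g_n,K_n]$), one shows that, conditionally on $U_n$, the window count $Z^{(K_n)}_0$ concentrates on the ``immediate cluster'', i.e.\ on the count over $[-g_n,g_n]$. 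Writing this count as (visits preceding the base visit)$+1+$(visits following it) and using the mixing property to decouple the two one-sided contributions, one expresses $\mu_{U_n}(Z^{(K_n)}_0=\ell)$ as a finite convolution of one-sided return probabilities and checks, via the elementary identity that produces the factor $\ell$, that $\tfrac1\ell\mu_{U_n}(Z^{(K_n)}_0=\ell)$ agrees with $\alpha_\ell(L,U_n)-\alpha_{\ell+1}(L,U_n)$ up to an error that vanishes as $n\to\infty$ and then $L\to\infty$; assumption~(3) (existence of the $\alpha_k$ and $\sum k^2\alpha_k<\infty$) ensures the two limits exist and are attained uniformly enough in $\ell$. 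I expect this step, and in particular the decoupling of the two sides of the window when $U_n\in\sigma(\mathcal A^n)$ with $n$ itself growing, to be the main obstacle.

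\emph{Step 3 (convergence of the compound Poisson laws and conclusion).} By construction $\sum_{\ell\ge1}\ell\,\tilde\lambda_\ell(K_n,U_n)=\mu_{U_n}\!\big(Z^{(K_n)}_0\ge1\big)=1$ for every $n$, so $\sum_{\ell>M}\tilde\lambda_\ell(K_n,U_n)\le 1/M$ uniformly in $n$. Combined with the pointwise convergence from Step~2 this yields $\sum_{\ell}\tilde\lambda_\ell(K_n,U_n)(s^\ell-1)\to\sum_\ell(\alpha_\ell-\alpha_{\ell+1})(s^\ell-1)$ for all $s\in[0,1]$, i.e.\ convergence of the probability generating functions, hence $\tilde\nu_{K_n,U_n}(E)\to\tilde\nu(E)$ for every $E\subset\mathbb N_0$, the limit $\tilde\nu$ being a genuine distribution because $\sum_\ell\ell^2(\alpha_\ell-\alpha_{\ell+1})<\infty$ by assumption~(3). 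The triangle inequality together with Step~1 then gives $\mu(W_n\in E)\to\tilde\nu(E)$. The left $\phi$-mixing statement follows verbatim, the only change being the replacement of $U_n^i$ by $\tilde U^i_n$ in Step~1.
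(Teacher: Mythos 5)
Your overall architecture is exactly the paper's: triangle inequality between $\mu(W_n\in\cdot)$, $\tilde\nu_{K,U_n}$ and $\tilde\nu$, with the first gap killed by Theorem~\ref{theorem.compound.poisson} (resp.\ Theorem~\ref{corollary.compound.poisson_psi}) and the second by convergence of the parameters. Your Step~1 is correct and in one respect sharper than the paper's: the observation that assumption~(2) already forces $n\mu(U_n)\to0$ (via $U_n\subset U_n^i$) is not made explicit in the paper, and your choice of $\Delta_n$ works under mere summability of $\phi$ (using $j\phi(j)\to0$ plus a diagonal argument), whereas the paper's choice $\Delta=\mu(U_n)^{-\beta}$ with ``$\beta>1/\gamma$'' tacitly assumes polynomial decay. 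Your Step~3 (uniform tail bound $\sum_{\ell>M}\tilde\lambda_\ell(K_n,U_n)\le 1/M$ from $\sum_\ell\ell\tilde\lambda_\ell(K_n,U_n)=1$, then convergence of generating functions) correctly fills in a passage the paper leaves implicit.

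The genuine gap is in Step~2, which you yourself flag as the main obstacle. Your proposed mechanism --- decouple the backward and forward half-window counts by the mixing property and write $\mu_{U_n}(Z^{(K_n)}_0=\ell)$ as a convolution of one-sided return probabilities --- cannot work as stated: the events $\{Z_0^-=a^-\}$ and $\{Z_0^+=a^+\}$ sit on adjacent time windows with no gap (and the sets involved are unions of $n$-cylinders with $n$ growing), so neither \eqref{def:phi_left}, \eqref{def:phi_right} nor $\psi$-mixing applies across the split at $0$; moreover, conditionally on $I_0=1$ the two half-counts are precisely the dependent quantities whose joint behaviour constitutes the clustering, so no product formula holds in general. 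The paper's proof uses a different mechanism, Lemma~\ref{tail.lemma} (quoted from \cite{haydn2020limiting}): by $T$-invariance, $\mathbb{E}\bigl(\ind_{Z_i^+=k}\ind_{Z_i^-=\ell-k}I_i\bigr)$ is, up to an error $\eta\mu(U_n)$, \emph{independent of the split} $k\in\{1,\dots,\ell\}$ --- each value of $k$ merely re-anchors the same cluster of $\ell$ visits at a different one of its points --- provided the cluster is isolated in a window of length $L$, and the hypothesis $\sum_k k^2\alpha_k<\infty$ is exactly what controls the failure of isolation. This exchangeability immediately gives $\mathbb{E}(\ind_{Z_i^+=k}\ind_{Z_i^-=\ell-k}I_i)=\frac1\ell\mathbb{E}(\ind_{Z_i=\ell}I_i)(1+\mathcal{O}(\eta))$, hence $\alpha_k(K,U_n)=(1+\mathcal{O}(\eta))\sum_{\ell\ge k}\tilde\lambda_\ell(K,U_n)$, and differencing yields $\lim_K\lim_n\tilde\lambda_k(K,U_n)=\alpha_k-\alpha_{k+1}$. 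So the ``elementary identity that produces the factor $\ell$'' is a stationarity/re-anchoring argument, not a mixing/independence argument, and without it (or an equivalent) your Step~2 does not close.
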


\section{Discussion of the results}\label{sec:discussion}

In this section, we list a series of remarks concerning the results presented in the previous section, together with some example illustrating these remarks.

\subsection{Concerning the assumptions}
Here we discuss the assumptions of the above theorems. 
\begin{itemize}
\item  It is classical in recurrence theory for dynamical systems to require some mixing conditions on the dynamic. Here we have two alternative assumptions which are not included one in the other. For Theorem \ref{main.theorem} for instance, we need either that the measure be $\psi$-mixing, or we require   right {(or left)} $\phi$-mixing with  polynomially decaying $\phi$. {The difference between assuming right or left $\phi$-mixing is made in order to handle the case of invertible maps (see Remark \ref{rem:bothsides} where, after the proof of Theorem \ref{theorem.compound.poisson}, this is explained)}. Plenty of examples satisfying these assumptions can be found in the literature \citep{bradley/2005,bradley2007introduction}. We will give some examples in Sections \ref{sec:examples} and \ref{sec:examples2}.

\item 
The assumption (2) of Theorem \ref{main.theorem} is necessary in our 
setting because in general, the $U_n$'s may be large unions of cylinders whose measures 
have to be controlled. It is clear that in the case where $\Gamma$ is a point, then our mixing 
assumptions automatically imply that $\mu(U_n)$ decays exponentially fast and thus 
satisfies the assumptions.

{If $U_n$ is the outer $n$-cylinder  approximation of $\Gamma$, then $U_n^j=U_j$ for any $j\le n$ and the condition simplifies to $\sum_{j\ge n}\mu(U_j)\to0$. }

\item Finally, we need that the $\alpha_k$'s exist and decay sufficiently fast so that $\sum_{k=1}^\infty k^2\alpha_k<\infty$. As we will explain, the existence/computation of the parameters $\alpha_k,k\ge1$ is not obvious in general, it is not granted by our mixing assumptions, and can only be, at most, guaranteed case by case. 
\end{itemize}

\subsection{Interpretation of the compound Poisson distribution}\label{rem:compPois} 

The definition  \eqref{eq:poiscomp} of the  Poisson compound distribution is not the most common in the literature. Let us explain that it indeed coincides with the classical definition.  Put $r:=\sum _\ell t\tilde \lambda_\ell$ and $\lambda_\ell:=t\tilde\lambda_\ell/r$. (Proposition~\ref{proposition.lambda} below will give conditions under which we have that $\lambda_\ell=\alpha_\ell-\alpha_{\ell+1}$.) With these quantities, we have
\[
\varphi_{\tilde\nu}(z)=e^{\sum_{k\ge1}t\tilde\lambda_k\left(e^{zk}-1\right)}=e^{r\sum_{k\ge1}\lambda_k\left(e^{zk}-1\right)}=e^{r\left(\sum_{k\ge1}\lambda_ke^{zk}-1\right)}.
\]
We recognise the moment generating function of the random variable $Z=\sum_{i=1}^NX_i$ in which $N\sim\textrm{Poisson}(r)$ and $X_i,i\ge1$ are i.i.d. integer valued r.v's with distribution
\[
P_X(\ell)
=\lambda_\ell
=\frac{\tilde\lambda_\ell}{\sum_k\tilde\lambda_k}\,,\,\,\ell\ge1.
\] 

When $\tilde\lambda_1=\lambda$ and $\tilde\lambda_k=0$, $k\ge2$, we obtain the straight Poisson distribution with parameter $t\lambda$. 

We can now make the relation with our results concerning the count of limiting
  returns to  sets with small measure. The interpretation of the Poisson random variable $N$ is that it gives the distribution
  of clusters which occur on a large
  timescale as suggested by Kac's formula. And the number of returns 
  in each cluster is given by the i.i.d.\ random variables $X_j$'s.
  These returns are on a fixed timescale and nearly independent of 
  the size of the return set as its measure is shrunk to zero. 

An important non-trivial compound Poisson distribution is the P\'olya-Aeppli
 distribution which happens when  the $X_j$'s are geometrically distributed with parameter $1-p$, 
 that is $P_X(k)=(1-p)p^{k-1},k\ge1$.
 
For instance, when $\tilde\lambda_\ell=(1-p)^2p^{\ell-1}$, the compound Poisson distribution with parameters 
$t\tilde\lambda_\ell,\ell\ge1$, is P\'olya-Aeppli since $\lambda_\ell=(1-p)p^{\ell-1}$. In this particular case 
we have moreover that $N\sim\text{Poisson}(t(1-p))$. This specific case will be called \emph{``P\'olya-Aeppli 
distribution with parameter $t(1-p)$''}.  This means in explicit form that 
$$
\tilde{\nu}(\{k\})=
e^{-(1-p)t}\sum_{j=1}^k\binom{k-1}{j-1}\frac{((1-p)^2t)^j}{j!}p^{k-j}.
$$

Several asymptotic distributions will appear along the paper, P\'olya-Aeppli or not, depending of the examples (and the setting).


\subsection{Relation to the extremal index in extreme value theory}\label{eq:EI}
Assuming that the $\alpha_k,k\ge1$ exist and vanish as $k$ diverges, we  have that $\sum_\ell\tilde\lambda_\ell=\sum_\ell(\alpha_\ell-\alpha_{\ell+1})$ telescopes to $\alpha_1$. This quantity, $\alpha_1:=\lim_{K\to\infty}\lim_{n\to\infty}\mu_{U_n}(K<\tau_{U_n})$, is called the {\em extremal index} and has a particular importance in extreme value theory \citep{freitas2013compound}.
 Under some circumstances \citep{abadi2020dynamical}, it is equal to the inverse of the mean cluster size. Indeed, according to Section~\ref{rem:compPois}, the expected cluster size is given by 
\[
\sum_{\ell}\ell\frac{\tilde\lambda_\ell}{\sum_k\tilde\lambda_k}
=\frac{\sum_{\ell}\ell\tilde\lambda_\ell}{\alpha_1}
=\frac{\sum_\ell\alpha_\ell}{\alpha_1}.
\]
It is explained in \cite{haydn2020limiting} (see for instance Theorem 2 and Remark 2 therein or see Proposition \ref{proposition.lambda} below) that, if $\sum_kk\sum_{\ell\ge k}\alpha_\ell<\infty$ (so in particular $\alpha_k$ exists and vanishes as $k$ diverges), then $\sum_\ell\alpha_\ell=1$ and we obtain the desired result $\frac{1}{\alpha_1}$ for the mean size of a cluster.

An important issue however is to know, for given dynamical systems, whether or not the limits appearing in all these quantities actually exist. We will investigate this question in Subsections \ref{sec:rege} and  \ref{sec:hoc} on some examples, and in Section \ref{sec:examples2} for the case of $g$-measures.


\subsection{Example 1: the House of cards process}\label{sec:hoc}

The house of cards process is a Markov chain on $\mathcal A=\{0,1,2,\ldots\}$ with transition matrix $Q$ parametrized by a sequence of $[0,1]$ real numbers $r_i,i\ge0$: 
\begin{equation}
Q(i,j)=\left\{
\begin{array}{ccc}
r_i&\text{if}&j=0\\
1-r_i&\text{if}&j=i+1.
\end{array}
\right.
\end{equation}
It has a stationary version if and only if $\sum_{i\ge1}\prod_{j=0}^{i-1}(1-r_j)<\infty$, which is the condition ensuring that the expecting distance between two consecutive occurrences of a $0$ is finite. In this case, the row vector $\pi$ satisfying $\pi Q=\pi$ is 
\[
\pi(k)=\pi(0)\prod_{i=0}^{k-1}(1-r_i)
\]
where
\[
\pi(0)=\frac{1}{1+\sum_{i\ge1}\prod_{j=0}^{i-1}(1-r_j)}.
\]

 For the stationary version of this Markov chain, we want to study the asymptotic distribution of the number of visits to runs of length $n$ above a threshold $l\ge1$. 

 We will use the stochastic process notation involving random variables, but in order to relate to the framework of Section \ref{sec:results}, we could let $\mu$ denote the measure on $\mathcal A^\mathbb N$ associated to the stationary process. This measure is $\sigma$-invariant, where $\sigma$ is the shift operator $\sigma:\mathcal A^{\mathbb N}\circlearrowleft$  defined through $(\sigma (x))_i=x_{i+1}$ for any $x=(x_0x_1x_1\dots)\in\mathcal A^{\mathbb N}$. We are interested in studying the statistics of visits of this symbolic system in 
$U_n=U_{n}(l)=\bigcap_{i=1}^{n} \sigma^{-i}[l,+\infty)$ as $n$ diverges for some fixed $l\ge1$.

Let $\{X_{i}\}_{i\ge0}$ be a stationary House of Cards Markov chain. By the Markov property, successive visits to $0$ parse the process into independent blocks.  Let us  denote 
\[
T:=\inf\{k\ge1:X_k=0\},
\]
and for any $i\ge0$
\[
q_i(k):=\mathbb P(T=k|X_0=i)=r_{i+k}\prod_{j=i}^{i+k-1}(1-r_j)
\]
the probability that the time elapsed until the next $0$, starting with $X_0=i$, be equal to $k$. 

Recalling the definition \eqref{eq:alpha} of $\alpha_{k}(L,U_n)$, we have
\[
\alpha_{k+1}(L,U_n)\ge\mathbb P(T=k|X_0\ge n+l)\mathbb P(X_i<n,i=1,\ldots,L-k|X_0=0),
\]
and
\begin{align*}
\alpha_{k+1}(L,U_n)\le &\,\,\mathbb P(T=k|X_0\ge n+l)\mathbb P(X_i<n,i=1,\ldots,L-k|X_0=0)\\& +\mathbb P(T<k|X_0\ge n+l)\mathbb P(T\ge n|X_0=0).
\end{align*}
Naturally, $T$ and $X_i$ being a.s. finite, we have that, as $n$ diverges,  $\mathbb P(X_i<n,i=1,\ldots,L-k|X_0=0)$ converges to 1 and $\mathbb P(T\ge n|X_0=0)$ converges to 0. We will prove below that, if $r_i\rightarrow r_\infty\in(0,1)$, then for any $k\ge1$ and any $l\ge1$
\begin{equation}\label{eq:convHoC}
\lim_n\mathbb P(T=k|X_0\ge n+l)=r_\infty(1-r_\infty)^k
\end{equation}
and therefore
\[
\alpha_{k+1}:=\lim_L\lim_{n\rightarrow\infty}\alpha_{k+1}(L,U_n)=\lim_{n\rightarrow\infty}\mathbb P_\pi(T=k|X_0\in U_n)=r_\infty(1-r_\infty)^k
\]
exists and decays exponentially fast in $k$, which grants Condition (3) of theorem \ref{main.theorem}.

Moreover, under the assumption $r_i\rightarrow r_\infty\in(0,1)$ we have that the Markov chain is Doeblin, and thus automatically exponentially {({right})} $\phi$-mixing \citep{bradley/2005}. This grants condition (1) of Theorem \ref{main.theorem}. Moreover, in our case, we have $U_n^j=U_j=\{X_i\ge l,i=0,\ldots,j-1\}$, thus
\[
\mu(U_n^j)=\sum_{n\ge l}\pi(n)\mathbb P(T\ge j|X_0=n)=\pi(0)\sum_{n\ge l}\prod_{i=0}^{j+n-1}(1-r_i)
\]
which is summable in $j$ since for any $\epsilon>0$, $r_i\ge r_\infty-\epsilon$ for large enough $i$'s, granting Condition (2) of Theorem \ref{main.theorem}.

Thus if $r_i\rightarrow r_\infty\in(0,1)$, we can apply Theorem \ref{main.theorem}, which gives us that the number of visits to $U_n$ is, asymptotically, P\'olya-Aeppli distributed with parameter $t(1-r_\infty)$. According to Subsection \ref{eq:EI},  the corresponding extremal index is $1/r_\infty$.

It only remains to prove the convergence \eqref{eq:convHoC}. Let us compute
\begin{align*}
\mathbb P(T=k|X_0\ge n+l)
&=\frac{\sum_{i\ge n+l}\mathbb P(T=k|X_0=i)\pi(i)}{\sum_{i\ge n+l}\pi(i)}\\
&=\frac{\sum_{i\ge n+l}r_{i+k}\prod_{j=i}^{i+k-1}(1-r_j)\pi(i)}{\sum_{i\ge n+l}\pi(i)}\\
&=\frac{\sum_{i\ge n+l}r_{i+k}\prod_{j=i}^{i+k-1}(1-r_j)\pi(0)\prod_{j=0}^{i-1}(1-r_j)}{\sum_{i\ge n+l}\pi(0)\prod_{j=0}^{i-1}(1-r_j)}\\
&=\frac{\sum_{i\ge n+k+l}r_i\prod_{j=0}^{i-1}(1-r_j)}{\sum_{i\ge n+l}\prod_{j=0}^{i-1}(1-r_j)}.
\end{align*}
By Stolz-Ces\`aro 
\begin{align*}
\lim_{n}\mathbb P(T=k|X_0\ge n+l)&=\lim_n\frac{r_{n+l+k}\prod_{j=0}^{n+l+k-1}(1-r_j)}{\prod_{j=0}^{n+l-1}(1-r_j)}\\
&=\lim_nr_{n+l+k}\prod_{j=n+l}^{n+l+k-1}(1-r_j)=r_\infty(1-r_\infty)^k
\end{align*}
as we said.

\subsection{Return and entry times}\label{sec:returnandentry}

An important task, in order to apply Theorem \ref{main.theorem}, is to prove that the involved limiting quantities exist and  to compute the sequence $\tilde\lambda_k,k\ge1$ (or, equivalently, $\lambda_k,k\ge1$, see Subsection~\ref{rem:compPois}), parameter of the asymptotic compound Poisson distribution. Here we give some alternative ways to prove these facts, by defining other quantities related to $\tilde\lambda_k$, $k\ge1$, which are eventually easier to handle. 

Let us define $\hat\alpha_\ell(K,U_n):=\mu_{U_n}(\tau_{U_n}^{\ell-1}\le K)$ and  assume that
  $\hat\alpha_\ell(K)=\lim_{n\to\infty}\mu_{U_n}(\tau_{U_n}^{\ell-1}\le K)$
  exist for $K$ large enough.
  Since $\{\tau_{U_n}^{\ell+1}\le K\}\subset \{\tau_{U_n}^{\ell}\le K\}$ we get that
  $\hat\alpha_\ell(K)\ge\hat\alpha_{\ell+1}(K)$ for all $\ell$ and in particular $\hat\alpha_1(K)=1$.
  By monotonicity the limits $\hat\alpha_\ell=\lim_{K\to\infty}\hat\alpha_\ell(K)$
  exist and satisfy $\hat\alpha_1=1$ and $\hat\alpha_\ell\ge\hat\alpha_{\ell+1}$ for any $\ell\ge1$.
  Now assume that moreover the limits $p_i^{(\ell)}=\lim_{n\to\infty}\mu_{U_n}(\tau_{U_n}^{\ell-1}=i)$
  of the conditional size of the level sets of the $\ell^{\text{th}}$ return time $\tau_{U_n}^\ell$
  exist for $i\ge0$ (clearly $p_i^{(\ell)}=0$ for $i\le\ell-2$). 
  According to Lemma 1 in \cite{haydn2020limiting} one has, for $\ell\ge2$,
 \[
  \hat\alpha_\ell=\sum_ip_i^{(\ell)}.
\]
We also have, by definition, that $\alpha_\ell=\hat\alpha_\ell-\hat\alpha_{\ell+1},\ell\ge1$. So the existence of the $\alpha_\ell$'s  is granted once the $\hat\alpha_\ell$'s exist, moreover, according to what we just said
\[
\alpha_\ell=\sum_i(p_i^{(\ell)}-p_i^{(\ell+1)})\,,\,\,\ell\ge2.
\]
This  relation also holds for $\ell=1$. To see this, first recall that
$$
\alpha_1=\lim_{K\to\infty}\lim_{n\to\infty}\mu_{U_n}(K<\tau_{U_n})
$$
 and observe  that $p_0^{(1)}=1$ and $p^{(1)}_i=0, i\ge1$.
 It follows that  we can write
$\sum_i(p_i^{(1)}-p_i^{(2)})=1-\sum_ip^{(2)}_i=1-\hat\alpha_2=\alpha_1$.

%
%
%
%
%

Finally, we state without proof  the following result which was proven in~\cite{haydn2020limiting}, and which gives an important characterization of $\lambda_\ell$ under some conditions.

\begin{proposition}[\cite{haydn2020limiting}]\label{proposition.lambda}
Let $U_n\subset\Omega$ be a nested sequence so that $\mu(U_n)\to0$ as $n\to\infty$. 
Assume that the limits $\hat\alpha_\ell(L)=\lim_{n\to\infty}\hat\alpha_\ell(L,U_n)$ exist for $\ell=1,2,\dots$
and $L$ large enough.
Assume $\sum_\ell\ell\hat\alpha_\ell<\infty$, then 
$$
\lambda_k=\frac{\alpha_k-\alpha_{k+1}}{\alpha_1}
$$
where $\alpha_k=\hat\alpha_k-\hat\alpha_{k+1}$.
In particular the limit defining $\lambda_k$ exists.
\end{proposition}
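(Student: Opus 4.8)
The plan is to treat this as a purely measure-theoretic statement about the arrays $\hat\alpha_\ell(L,U_n)$ and $\alpha_k(L,U_n)$: no mixing is needed, only Poincar\'e recurrence (which ensures $\tau_{U_n}<\infty$ $\mu_{U_n}$-a.s., hence $\tau_{U_n}^k<\infty$ for every $k$), the hypothesis that the limits $\hat\alpha_\ell(L)$ exist, and the summability hypothesis to prevent escape of mass.

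First I would establish existence of the $\alpha_k$. Since $\alpha_k(L,U_n)=\mu_{U_n}(\tau_{U_n}^{k-1}\le L<\tau_{U_n}^{k})=\hat\alpha_k(L,U_n)-\hat\alpha_{k+1}(L,U_n)$, the assumed convergence in $n$ gives $\alpha_k(L):=\lim_n\alpha_k(L,U_n)=\hat\alpha_k(L)-\hat\alpha_{k+1}(L)$. As already noted in the text, each $\hat\alpha_\ell(L)$ is non-decreasing in $L$ and bounded by $1$, so $\hat\alpha_\ell=\lim_L\hat\alpha_\ell(L)$ exists, $\hat\alpha_\ell\ge\hat\alpha_{\ell+1}$, and therefore $\alpha_k=\lim_L\alpha_k(L)=\hat\alpha_k-\hat\alpha_{k+1}\ge0$ exists as well. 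Recalling that in the present setting $\lambda_k$ is by definition $\tilde\lambda_k/\sum_j\tilde\lambda_j$ with $\tilde\lambda_j=\alpha_j-\alpha_{j+1}$ (Subsection~\ref{rem:compPois} and Theorem~\ref{main.theorem}), this already accounts for the last sentence of the statement, once the normalising constant is shown to be $\alpha_1$.

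Next I would pin down that normalising constant. Because $\tau_{U_n}^{k-1}\ge k-1$, the vector $\{\alpha_k(L,U_n)\}_{k\ge1}$ is supported on $\{1,\dots,L+1\}$, and by Poincar\'e recurrence $\sum_{k\ge1}\alpha_k(L,U_n)=1$; letting $n\to\infty$ through this finite sum shows $\{\alpha_k(L)\}_k$ is again a probability vector, so that $\sum_{k=1}^{K}\alpha_k(L)=\hat\alpha_1(L)-\hat\alpha_{K+1}(L)=1-\hat\alpha_{K+1}(L)$ (using $\tau_{U_n}^0=0$, whence $\hat\alpha_1\equiv1$). Letting $L\to\infty$ gives $\sum_{k=1}^{K}\alpha_k=1-\hat\alpha_{K+1}$. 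Now $\sum_\ell\ell\hat\alpha_\ell<\infty$ forces $\ell\hat\alpha_\ell\to0$, in particular $\hat\alpha_{K+1}\to0$, so no mass escapes: $\sum_{k\ge1}\alpha_k=1$ and $\alpha_k\le\hat\alpha_k\to0$. Hence the telescoping series
\[
\sum_{j\ge1}\tilde\lambda_j=\sum_{j\ge1}(\alpha_j-\alpha_{j+1})=\alpha_1
\]
converges, and we obtain $\lambda_k=\tilde\lambda_k/\sum_j\tilde\lambda_j=(\alpha_k-\alpha_{k+1})/\alpha_1$, as claimed. (The full strength $\sum_\ell\ell\hat\alpha_\ell<\infty$, rather than merely $\hat\alpha_\ell\to0$, is what additionally yields $K\alpha_{K+1}\le K\hat\alpha_{K+1}\to0$ and hence the Abel resummation $\sum_k k(\alpha_k-\alpha_{k+1})=\sum_k\alpha_k=1$, i.e.\ the finite mean cluster size $\sum_k k\lambda_k=1/\alpha_1$ invoked in Subsection~\ref{eq:EI}.)

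I expect the only genuine subtlety to be the interchange of the $n\to\infty$ and $L\to\infty$ limits together with the control of mass potentially escaping to $k=\infty$; this is precisely where the hypotheses enter, Poincar\'e recurrence making each $\{\alpha_k(L,U_n)\}_k$ a finitely supported probability vector and the summability of the $\hat\alpha_\ell$ closing the $L$-limit without loss of mass. A secondary, harmless point worth flagging is the degenerate case $\alpha_1=0$ (equivalently $\hat\alpha_2=1$): then $\sum_j t\tilde\lambda_j=0$, the limiting law of Theorem~\ref{main.theorem} is the point mass at $0$, the normalisation of Subsection~\ref{rem:compPois} is vacuous, and the displayed formula for $\lambda_k$ is to be read as valid whenever $\alpha_1>0$.
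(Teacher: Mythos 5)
Your argument is correct. The paper states Proposition~\ref{proposition.lambda} explicitly without proof (deferring to \cite{haydn2020limiting}), but the route you take --- the identity $\alpha_k(L,U_n)=\hat\alpha_k(L,U_n)-\hat\alpha_{k+1}(L,U_n)$, monotonicity in $L$, $\hat\alpha_1\equiv1$, Poincar\'e recurrence making each $\{\alpha_k(L,U_n)\}_{k}$ a finitely supported probability vector, and the telescoping $\sum_\ell\tilde\lambda_\ell=\alpha_1$ once $\hat\alpha_K\to0$ --- is precisely the set of ingredients the paper itself assembles informally in Subsections~\ref{sec:returnandentry} and~\ref{eq:EI}, so your proof is essentially the intended one (with the sensible caveat, which you flag, that the displayed formula presupposes $\alpha_1>0$).
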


%
%

 
\subsection{Example 2: Regenerative processes}\label{sec:rege}

We recall that a a stochastic process $\{X_i\}_{i\ge0}$  is a regenerative process if there exist random times $T_1<T_2<\ldots$ such that the sigma fields $\sigma(X_{T_{n}}^{T_{n+1}-1}),n\ge1$ are i.i.d. and independent of $\sigma(X_0^{T_{1}-1})$. So the model is completely defined if we specify the distribution of $X_0^{T_{1}-1}$, and $X_{T_{1}}^{T_{2}-1}$. 
Here we consider a particular case in which  these vectors belong to $\bigcup_{a\in\mathcal A}\bigcup_{k\ge1}a^k$ where $a^k$ denotes the vector $(a,\ldots,a)$ of $k$ times the same symbol $a$ concatenated, and $\mathcal A\subset \mathbb N$. In other words, the independent blocks are filled up with only one symbol as follows
\[
X_0^\infty=\underbrace{X_0\ldots X_0}_{\text{$T_{1}$ times}}\underbrace{X_{T_1}\ldots X_{T_1}}_{\text{$T_2-T_1$ times}}\ldots \underbrace{X_{T_n}\ldots X_{T_n}}_{\text{$T_{n+1}-T_{n}$ times}}\ldots
\]
Specifically, we consider that for any $a\in\mathcal A$ and $k\ge1$
\[
\mathbb P(X_{T_{1}}^{T_{2}-1}=a^k)=p(a)q_a(k)
\]
where $\sum_ap(a)=1$ and, for any $a\in\mathcal A$, $\sum_kq_a(k)=1$. A way to interpret the above formula is in a two-steps procedure. First we choose the symbol $X_{T_1}=a$ independently of everything, with probability $p(a)$, and next, we choose the size $k$ of the block, with probability $q_a(k)$. 
This is a particular instance of Semi-Markov process  \citep{janssen2006applied,cinlar2013introduction}. In particular, it is well-known that there exists a stationary version of the process if and only if the expectation of the blocks is finite, that is
\[
\nu=\sum_{a\in\mathcal A}p(a)\nu_a:=\sum_{a\in\mathcal A}p(a)\sum_{k\ge1}kq_a(k)<\infty
\]
where $\nu_a$ is the expectation of the blocks of symbols $a$. Another known fact is that, for the process to be stationary, the distribution of $X_0^{T_{1}-1}$ must be 
\[
\mathbb P(X_0^{T_{1}-1}=a^k)=\bar p(a)\bar q_{a}(k)\,\,,\,\,\,a\in\mathcal A,k\ge1
\]
where 
\[
\bar p(a)=\frac{p(a)\nu_a}{\nu}\,\,\,\,\,\text{and}\,\,\,\,\,\,
\bar q_a(k)=\frac{\sum_{l\ge k}q_a(l)}{\nu_a}. 
\]

We want to study the distribution of the number of visits to states larger or equals to $n$ when $n$ gets large (we assume that $\mathcal A$ is countably infinite).

As for the House of Cards Markov chain, we can relate to the framework of Section \ref{sec:results} by considering the symbolic measures space $(\mathcal A^{\mathbb N},\mathcal B,\mu,\sigma)$, and this time, we consider the nested sets $U_n=[n,+\infty),n\ge1$. 

The regenerative structure was also present in the House of Cards Markov chain, since visits to $0$ cut the realisation into independent blocks. However, regenerative processes need not be Markovian. The first step, if we want to use Theorem \ref{main.theorem} is to investigate the mixing properties of this model. 
\begin{proposition}\label{prop:rege_phi}
For the regenerative process described above, {inequality \eqref{def:phi_right}} holds for
\[
\phi(k)= 2\sup_{a\in\mathcal A}\sum_{i> k}\bar q_{a}(i).
\]
\end{proposition}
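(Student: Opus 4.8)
The plan is to verify the right $\phi$-mixing inequality \eqref{def:phi_right} directly from the regenerative (semi-Markov) structure of the process, with the renewal times $T_1<T_2<\dots$ playing the role of a coupling/regeneration mechanism. Fix $n,m\ge1$, a set $U\in\sigma(\mathcal A^n)$ determined by coordinates $x_0,\dots,x_{n-1}$, and a set $V\in\sigma(\bigcup_m\mathcal A^m)$ determined by coordinates from index $0$ onward. We want to compare $\mu(U\cap \sigma^{-n-k}V)$ with $\mu(U)\mu(V)$; after dividing by $\mu(V)$ this is the quantity appearing in \eqref{def:phi_right}. The key observation is that, because the blocks $X_{T_j}^{T_{j+1}-1}$ are i.i.d.\ and independent of $X_0^{T_1-1}$, the process restarts afresh at each renewal time: if some renewal time $T_j$ falls in the ``gap'' window $\{n,n+1,\dots,n+k\}$, then the configuration on coordinates $\ge n+k$ is independent of the configuration on coordinates $<n$, and moreover (by stationarity of the regenerative process) the law of $\sigma^{-n-k}$ applied to the post-$T_j$ part has exactly the stationary law. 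So conditionally on the event $R_{n,k}:=\{$there is a renewal in $[n,n+k]\}$, the events $U$ (restricted to the first $n$ coordinates) and $\sigma^{-n-k}V$ become independent, and $\mu(\sigma^{-n-k}V\mid R_{n,k}, \text{first } n \text{ coords})=\mu(V)$.

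Carrying this out, I would write
\[
\mu(U\cap\sigma^{-n-k}V)=\mu(U\cap\sigma^{-n-k}V\cap R_{n,k})+\mu(U\cap\sigma^{-n-k}V\cap R_{n,k}^c),
\]
bound the second term by $\mu(U\cap R_{n,k}^c)$, and on the first term use the regeneration to factor it as $\mu(V)\,\mu(U\cap R_{n,k})$. Subtracting $\mu(U)\mu(V)=\mu(V)\mu(U\cap R_{n,k})+\mu(V)\mu(U\cap R_{n,k}^c)$ gives
\[
|\mu(U\cap\sigma^{-n-k}V)-\mu(U)\mu(V)|\le 2\,\mu(U\cap R_{n,k}^c)\le 2\,\mu_{\text{first coords}}\!\big(R_{n,k}^c\mid U\big)\,\mu(U),
\]
where the factor $2$ collects the two error contributions. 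Dividing by $\mu(V)$ (which is allowed since $V$ was arbitrary and the bound is uniform in $V$) will need a small massaging: it is cleaner to keep $\mu(V)$ on the left and then note the right-hand side is $2\mu(U)\mu_U(R_{n,k}^c)$, so \eqref{def:phi_right} holds with $\phi(k)=2\sup_U\mu_U(R_{n,k}^c)$. The remaining task is to estimate $\mu_U(R_{n,k}^c)$ — the conditional probability that \emph{no} renewal occurs in a block of $k$ consecutive sites starting at position $n$, given the first $n$ coordinates. Here is where the explicit form $\phi(k)=2\sup_{a}\sum_{i>k}\bar q_a(i)$ should come from: the worst case is when position $n$ sits inside a long block of some symbol $a$; conditioned on being inside such a block at time $n$, the residual length of that block until the next renewal has (by the stationary size-biased law) a tail governed by $\bar q_a$, and $R_{n,k}^c$ forces this residual length to exceed $k$, i.e.\ probability at most $\sum_{i>k}\bar q_a(i)$, uniformly bounded by $\sup_{a}\sum_{i>k}\bar q_a(i)$.

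I expect the main obstacle to be the bookkeeping in the conditioning step: $U$ is an arbitrary union of $n$-cylinders, so ``conditioning on the first $n$ coordinates'' must be handled via a disintegration over individual cylinders, checking that the regeneration/independence argument is valid cylinder-by-cylinder and that the residual-length bound $\sum_{i>k}\bar q_a(i)$ is correct regardless of how far into a block position $n$ lies (one should verify the residual length is stochastically dominated by the size-biased residual, using that $\bar q_a$ is already the stationary forward-recurrence-time distribution). The other delicate point is making sure the factor $2$ is genuinely enough — one error term is $\mu(U\cap R_{n,k}^c)$ and the other, coming from $\mu(U)\mu(V)$, is $\mu(V)\mu(U\cap R_{n,k}^c)\le\mu(U\cap R_{n,k}^c)$ since $\mu(V)\le1$, so after dividing through by $\mu(U)$ the total is at most $2\mu_U(R_{n,k}^c)$, confirming the stated $\phi(k)$. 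None of this involves heavy computation; it is a matter of setting up the renewal decomposition cleanly and identifying the stationary residual-block tail as the source of the bound.
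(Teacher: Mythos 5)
Your overall strategy is the same as the paper's: isolate the event that no regeneration occurs in the gap between the two windows, use the regenerative structure to decouple past and future on the complement, and identify the resulting error with twice the tail $\sup_a\sum_{i>k}\bar q_a(i)$ of the stationary residual block length. The paper implements this by decomposing $\mathbb P(B\mid A)$ over the exact value $i$ of $T_{l(n)}-n$, the waiting time from $n$ to the first renewal, cancelling the terms with $i\le k$ against the corresponding terms in the expansion of $\mathbb P(B)$, and bounding what remains by $\mathbb P(T_{l(n)}-n>k\mid A)+\mathbb P(T_{l(n)}-n>k)$; for a single cylinder $A=[b_0^{n-1}]$ these two tails are $\sum_{i>k}\bar q_{b_{n-1}}(i)$ and $\sum_{i>k}\sum_a\bar p(a)\bar q_a(i)$, whence the factor $2$.

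The genuine gap in your version is the factorization $\mu(U\cap\sigma^{-n-k}V\cap R_{n,k})=\mu(V)\,\mu(U\cap R_{n,k})$, i.e.\ the claim that $\mu(\sigma^{-n-k}V\mid R_{n,k},\,X_0^{n-1})=\mu(V)$. Conditioning on ``some renewal falls in $[n,n+k]$'' does not restore the stationary law on the coordinates from $n+k$ onward: after a regeneration the process restarts with the \emph{fresh} block law $p(a)q_a(\cdot)$ rather than the size-biased stationary one, and the location of that renewal inside the window --- which is correlated with $U$ through the residual life of the block covering position $n-1$ --- determines how far into the fresh process the window $[n+k,\infty)$ begins. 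So the conditional law of $\sigma^{-n-k}V$ given $R_{n,k}$ and the past genuinely depends on the past, and the identity fails. What is true is only that $\mathbb P(B\mid T_{l(n)}=s,\,A)=\mathbb P(B\mid T_{l(n)}=s)$ for each fixed renewal position $s$ inside the gap, which is why the paper conditions on the exact value of $T_{l(n)}-n$ rather than on the window event; even there, turning the $i\le k$ cancellation into a genuine identity requires an argument that the conditional and unconditional laws of $T_{l(n)}-n$ can be matched, so this step deserves care in either formulation. A second, smaller issue: your bound $|\mu(U\cap\sigma^{-n-k}V)-\mu(U)\mu(V)|\le 2\mu(U\cap R_{n,k}^c)$ yields, after dividing by $\mu(U)$, the \emph{left} inequality \eqref{def:phi_left}; dividing by $\mu(V)$, as \eqref{def:phi_right} requires, produces $2\mu(U\cap R_{n,k}^c)/\mu(V)$, which is not uniformly controlled in $V$. (The paper's own proof likewise estimates $|\mathbb P(B\mid A)-\mathbb P(B)|$, i.e.\ the left version, so this mismatch with the statement is inherited rather than introduced by you.)
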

The proof of this proposition is not difficult but since we did not find it in the literature, we do it in Section \ref{sec:remaining_results}.

An interesting case is the \emph{Smith example}, in which $q_a(a+1)=\frac{1}{a}=1-q_a(1)$. This example was used by \cite{haydn2020limiting} as a case in which Proposition \ref{proposition.lambda} cannot be used, because they proved that $\hat\alpha_k=\frac{1}{2}$ for any $k$. In any case, we cannot use Proposition \ref{prop:rege_phi} neither, and thus we cannot conclude on the statistics of this model here. 

To simplify the presentation, suppose now that  $q_a(l)=q(l)$ for any $l\ge1$ independently of $a$, in which case $\bar q_a=\bar q$ for any $a$ also. According to Proposition \ref{prop:rege_phi}, Condition (1) of Theorem \ref{main.theorem} is granted if the probability distribution $\bar q$ has first moment. 

On the other hand, it is not too complicated to see that $\alpha_k(L,U_n)$ (see \eqref{eq:alpha}) is close to the probability that the starting block, which is a block of a symbol in $U_n$ (since we are conditioned on starting in $U_n$), equals $k$. Indeed, using a similar reasoning as the one used for the house of cards Markov chain \cite[Section 8.2]{haydn2020limiting}, we get
\[
\alpha_k(L,U_n)=\bar q(k)+\mathcal O(L\mu(U_n))
\]
from which it follows that
\[
\alpha_k=\bar q(k)= \frac{\sum_{l\ge k}q(l)}{\sum_kkq(k)}
\]
since in the present case $q_a(k)=q(k)$ for any $k$. So condition (3) of Theorem \ref{main.theorem} is granted if we assume that $\bar q$ has finite second moment, that is, $\sum k^2\bar q(k)<\infty$. This in turns is granted if $q$ has third moment, since (we put $\nu_a=\nu$ for any $a$)
\[
\sum_kk^2\bar q(k)=\frac{1}{\nu}\sum_kk^2\sum_{l\ge k} q(l)\le\frac{1}{\nu}\sum_k\sum_{l\ge k} l^2q(l)=\frac{1}{\nu}\sum_kk^3q(k)<\infty.
\] 
{In order to ensure condition (2), we will assume that $n\sum_{i\ge n}p(i)\rightarrow0$ as $n$ diverges, which is the case for instance if the probability distribution $p(a),a\in\mathcal A$ has first moment. Then, since in our case $U_n^i=U_n$ for any $i\le n$, we have for any $1\le K$ and sufficiently large $n$
\[
\sum_{i=K}^n\mu(U_n^i)\le n\mu(U_n)=n\sum_{i\ge n}p(i)\le K\sum_{i\ge K}p(i).
\]
Thus taking $a_k= k\sum_{i\ge k}p(j)$, the assumption (2) of Theorem \ref{main.theorem} also holds. }

We conclude that, if $p(a),a\in \mathcal A$ has first moment and $q(k),k\ge1$ has third moment, then the number of visits to $U_n$ is, asymptotically compound Poisson with parameter $t\tilde\lambda_k,k\ge1$ where
\[
\tilde\lambda_k=\frac{q(k)}{\sum_kkq(k)}.
\]

As explained in Subsection \ref{eq:EI}, $\alpha_1=\sum_k\tilde\lambda_k$ which in this case gives $1/\sum_kkq(k)$. Thus the extremal index is $1/\alpha_1=\sum_kkq(k)$ which is the expected block size. 


\subsection{Number of visits around a point}\label{sec:examples}

Determining the limiting distribution of the number of visits when the limiting target set is a point and the nested sequence of sets is a sequence of cylinders containing the point is a classical question in the literature of recurrence theory \citep{haydn2013entry}. It is however a nice way to illustrate Theorem \ref{main.theorem}.

Suppose that the mixing conditions of Theorem \ref{main.theorem} are satisfied by the dynamic under consideration. 
Fix a point $x\in\Omega$ and for any $n\ge1$ consider the $n$-cylinder $A_n(x)$, that is, the unique atom of $\mathcal{A}^n$ containing $x\in\Omega$. In the notation of the previous section, we let $U_n=A_n(x)$ (that is $\Gamma=\{x\}$), and we ask what  is the asymptotic distribution of visits to $U_n,n\ge1$. It is well-known that in this case there is a dichotomy according to whether $x$ is aperiodic, in which case we have a Poisson distribution, or periodic, in which case we have a Polya-Aeppli distribution instead. Illustrating how to use our results in this simple example will be the opportunity to clarify technical details concerning notation and some involved limiting quantities.

Initially, for any measurable set $U\subset\Omega$ we write $\tau(U)=\inf_{y\in U}\tau_U(y)$ for 
 the {\em period} of $U$. In other words,  $U\cap T^{-j}U=\varnothing$ for 
 $j=1,\dots,\tau(U)-1$ and $U\cap T^{-\tau(U)}U\not=\varnothing$.
The proof of the following lemma is direct and  can be found for instance in \cite{haydn2020limiting}. 
 
 \begin{lemma} Let $\mathcal{A}$ be a (finite) generating partition of $\Omega$.
 Then the sequence $\tau(A_n(x))$, $n=1,2,\dots$ is bounded if and only if
 $x$ is a periodic point.
 \end{lemma}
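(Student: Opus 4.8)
The statement is an equivalence, so I would prove the two implications separately; the key fact I would exploit is that $\mathcal{A}$ is a \emph{finite} generating partition, which forces the atoms $A_n(x)$ to shrink to the singleton $\{x\}$ while there are only finitely many ``shapes'' available at each fixed level.

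First, the easy direction: suppose $x$ is periodic with period $p$, i.e.\ $T^p x = x$. Then $x \in A_n(x) \cap T^{-p}A_n(x)$ for every $n$ (since $T^p x = x \in A_n(x)$), so $A_n(x) \cap T^{-p}A_n(x) \ne \varnothing$, which gives $\tau(A_n(x)) \le p$ for all $n$; hence the sequence is bounded. This needs no mixing and no finiteness of $\mathcal{A}$.

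For the converse, I would argue by contraposition in a compactness-flavoured way, or directly. Suppose $\tau(A_n(x)) \le M$ for all $n$, with $M$ a fixed bound. For each $n$ pick $k_n := \tau(A_n(x)) \in \{1,\dots,M\}$ and a point $y_n \in A_n(x)$ with $T^{k_n} y_n \in A_n(x)$. By the pigeonhole principle there is a value $k \in \{1,\dots,M\}$ with $k_n = k$ for infinitely many $n$; pass to that subsequence. Since the $A_n(x)$ are nested and decreasing, for $n \ge m$ we have $y_n \in A_m(x)$ and $T^k y_n \in A_m(x)$, i.e.\ the set $A_m(x) \cap T^{-k}A_m(x)$ is nonempty for every $m$. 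Now I would invoke the generating property: $\bigcap_m A_m(x) = \{x\}$, and because $\mathcal{A}$ is finite, each $A_m(x)$ is a cylinder on which $T^j$ (for $j \le$ some bound) acts ``continuously'' in the sense that $T^{-k}A_m(x)$ is a union of finitely many cylinders; the decreasing intersection $\bigcap_m \big(A_m(x) \cap T^{-k}A_m(x)\big)$ is a nested intersection of nonempty sets each of which is a finite union of cylinders, so it is nonempty and must equal $\{x\} \cap T^{-k}\{x\}$, forcing $T^k x = x$. Hence $x$ is periodic. The cleanest way to phrase the last step: the condition $A_m(x)\cap T^{-k}A_m(x)\ne\varnothing$ for all $m$ says that the $m$-cylinder of $x$ and the $m$-cylinder of $T^k x$ agree in the relevant coordinates for all $m$ (using that $T^k$ maps an $m$-cylinder into the corresponding shifted cylinder structure), and since the partition is generating this means $x$ and $T^kx$ have the same symbolic coding, hence are equal.

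The main obstacle is making the last step rigorous in the abstract measurable setting of the paper rather than the symbolic one: one must be careful that ``$A_m(x)\cap T^{-k}A_m(x)\ne\varnothing$ for all $m$'' genuinely passes to the limit to give $T^k x = x$. The safe route is to reduce to the symbolic picture afforded by the generating partition: the map $x \mapsto (a_j(x))_{j\ge 0}$, where $a_j(x)\in\mathcal{A}$ is the atom containing $T^j x$, is injective (that is exactly what ``generating, atoms are singletons'' means), and $\tau(A_n(x))$ bounded translates, via the nested nonempty intersections above, into the coding sequences of $x$ and $T^k x$ agreeing on longer and longer prefixes, hence being equal, hence $x = T^k x$ by injectivity. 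I would present the argument in that language, citing that the details are standard (as the statement already says the proof can be found in \cite{haydn2020limiting}).
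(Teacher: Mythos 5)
The paper itself gives no proof of this lemma (it states that the proof is ``direct'' and defers to \cite{haydn2020limiting}), so there is nothing to compare against line by line; judged on its own, your argument is correct. The forward implication is exactly right. For the converse, the one step you should not lean on is the claim that the nested intersection $\bigcap_m\bigl(A_m(x)\cap T^{-k}A_m(x)\bigr)$ is nonempty because each term is a nonempty finite union of cylinders: in the abstract measurable setting of the paper a decreasing sequence of nonempty sets can have empty intersection, and even after coding the image of $\Omega$ need not be closed, so no compactness is available. But you correctly identify this and your ``safe route'' repairs it: from $A_m(x)\cap T^{-k}A_m(x)\ne\varnothing$ one reads off that $a_{j+k}(x)=a_j(x)$ for all $j\le m-1-k$ (any witness $y$ has the same length-$m$ coding as $x$, and $T^ky$ does too), so letting $m\to\infty$ the coding of $x$ is $k$-periodic, whence $T^kx$ and $x$ have identical codings and are equal by the generating property --- no limit point of the $y$'s is ever needed. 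Two minor simplifications: since the $A_n(x)$ are nested, $\tau(A_n(x))$ is non-decreasing in $n$, so a bounded sequence is eventually constant and the pigeonhole step can be skipped; and finiteness of $\mathcal{A}$ plays no role in your argument (the hypothesis is simply stronger than what you use). In the two-sided case the same coordinate bookkeeping goes through with indices running over $\{-m+1,\dots,m-1\}$.
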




We start with the case where $x$ is a periodic point,
 with minimal period $m$, say. Let us compute the values $\lambda_\ell$.  For $n$ large enough one has $\tau(A_n(x))=\tau_\infty=m$
 and therefore $A_n(x)\cap\{\tau_{A_n(x)}=m\}=A_n(x)\cap T^{-m}A_n(x)=A_{n+m}(x)$. Moreover  
 $p_i^{(\ell)}=\lim_{n\rightarrow\infty}\mu_{A_n(x)}(\tau^{\ell-1}_{A_n(x)}=i)=0$ for $i<m$. 

 Assume the limit
\begin{equation}\label{eq:limit}
p:=p^{(2)}_m=\lim_{n\to\infty}\frac{\mu(A_{n+m}(x))}{\mu(A_n(x))}
\end{equation}
  exists, then one also has more generally 
 $$
 p^{(\ell)}_{(\ell-1)m}=\lim_{n\to\infty}\frac{\mu(A_{n+(\ell-1) m}(x))}{\mu(A_n(x))}=p^{\ell-1}.
 $$

 All other values of $p^{(\ell)}_i$ are zero, that is $p^{(\ell)}_i=0$ if $i\not=(\ell-1)m$.
 Thus $\hat\alpha_\ell=p^{(\ell)}_{(\ell-1)m}=p^{\ell-1}$
  and consequently
 $$
 \alpha_\ell=\hat\alpha_\ell-\hat\alpha_{\ell+1}=(1-p)p^{\ell-1}
 $$
  which is a geometric distribution and in particular implies that $\sum_kk^2\alpha_k<\infty$, meaning that condition (3) of Theorem \ref{main.theorem}. Moreover, in the present case, we have $U_n^i=U_n$ for any $i\le n$, and since our mixing assumptions imply that the measure of cylinders decays exponentially fast in $n$, it follows that Assumption (2) of Theorem \ref{main.theorem} is automatically granted. So by Theorem \ref{main.theorem}, we conclude that the random variable $W$ has P\'olya-Aeppli distribution  with parameter $t(1-p)$ (see Subsection \ref{rem:compPois}).  
  
We now consider the case of a non-periodic point $x$. In this case, the increasing sequence $\tau(A_n(x))$ goes to infinite
as $n\to\infty$.  Note that 
$\mu_{A_n(x)}(\tau_{A_n(x)}\le K)=0$ for all $n$ large enough so that $K<\tau(A_n(x))$.
Hence $\hat\alpha_2(K)=0$ for all $K$ which implies that $\hat\alpha_2=0$
and consequently $\hat\alpha_\ell=0$ for all $\ell\ge2$.
Consequently in this case the extremal index is $\alpha_1=1-\hat\alpha_2=1$ and $\alpha_k=0,k\ge2$ so that $\tilde\lambda_1=1$ and $\tilde\lambda_k=0,k\ge2$ and therefore $W$ is Poisson$(t)$ distributed.
%


\section{The case of $g$-measures}\label{sec:examples2}

Let $\mathcal{A}=\{1,2,\dots,M\}$ be a finite alphabet and 
\[
\Sigma_B=\left\{x\in\mathcal{A}^{\mathbb{N}}: B_{x_i,x_{i+1}}=1,\,\forall i\ge1\right\}\subset\Sigma:=\mathcal A^\mathbb N
\]
where $B$ is an aperiodic and irreducible $M\times M$ matrix of $0$'s and $1$'s. Let $\mathcal F$ denote the Borel $\sigma$-algebra of $\Sigma_B$. 
For any finite string $a_1^n$ (shorthand notation for $(a_1,\ldots a_n),a_i\in\mathcal A$) of symbols of $\mathcal A$, 
we let  $[a_1^n]:=\{x\in\Sigma_B:x_i=a_i,i=1,\ldots,n\}$ denotes the corresponding cylinder set. 
The Borel $\sigma$-algebra $\mathcal F$ is generated by the cylinder sets. The shift operator $\sigma:\Sigma_B\circlearrowleft$ 
 defined through $(\sigma (x))_i=x_{i+1}$ for any $x=(x_1x_2x_3\dots)\in\Sigma_B$ is called sub-shift of finite type. 
 
A measurable function $g:\Sigma_B\rightarrow[0,1]$ satisfying 
\begin{equation}\label{eq:g}
\sum_{y:\sigma(y)=x}g(y)=\sum_{a\in\mathcal{A}}g(ax)=1
\end{equation}
 for any $x\in\Sigma_B$ is called a \emph{$g$-function}. Let $\mathcal{L}_g$ be the associated transfer operator given by 
$$
\mathcal{L}_gf(x)=\sum_{y:\sigma(y)=x}g(y)f(y)=\sum_{a\in\mathcal{A}}g(ax)f(ax),
$$
for functions $f:\Sigma_B\to\mathbb{R}$, where $ax$ is the concatenation
of the symbol $a$ with the sequence $x$ (if admissible).
A \emph{$g$-measure} is a probability measure satisfying $\mathcal{L}_{g}^*\mu=\mu$ \citep{keane/1972} 
where $\mathcal{L}_{g}^*$ is the dual of $\mathcal{L}_g$. This is equivalent \citep{ledrappier} to $\mu$ 
being $\sigma$-invariant and satisfying
\[
\mathbb{E}_{\mu}(\ind_{[a]}|\mathcal{F}_2^{\infty})(x)=g(a\,\sigma (x)),
\]
 for any $a\in\mathcal A$ and $\mu$-almost every $x\in\Sigma_B$. Yet another equivalent way  is to define $\mu$ is through the variational principle, 
\[
\mu\in\text{argmax}\left\{h_{\nu}+\int\log g\,d\nu:\nu\,\text{ is $\sigma$-invariant} \right\}
\]
where $h_\nu$ denotes the Kolmogorov-Sinai entropy. The maximum
 of the quantity above is called the topological pressure of $\log g$ and denoted $P(\log g)$. It turns out that, since $\sum_{y:\sigma(y)=x}g(y)=1$,  we have $P(\log g)=0$. 

All the above can be stated in the framework of \emph{equilibrium states} for a real function 
$\varphi$ on $\Sigma_B$. This can be done simply by substituting $g$ by $e^\varphi$, except for 
the restriction \eqref{eq:g} which is put in the $g$-measure context to give a stochastic process flavour. 
We refer to \cite{ledrappier, walters/1975} for the proofs of all the above equivalences and further
 details on the variational principle for generic potentials and $g$-functions.

An important characterisation of the regularity of $g$ is its variation of order $k\ge1$
\begin{equation}\label{eq:variation}
\text{var}_k\,g:=\sup\{|g(x)-g(y)|:x_1^k=y_1^k\}.
\end{equation}
The convergence $\text{var}_k\, g\rightarrow0$ is equivalent to uniform continuity in the product topology. In this case,  $\mathcal{L}_{g}^*\mu=\mu$ has at least one solution \citep{keane/1972}.  Under the stronger assumption that 
$\sum_k\text{var}_k\, g<\infty$ and $g>0$, there is a unique $g$-measure specified by $g$ \citep{ledrappier} and it enjoys $\psi$-mixing \citep[see the proof of Theorem 3.2 therein]{walters/1975}.

\subsection{Visits close to a periodic point} In Subsection \ref{sec:examples}, we considered the case of visits around a point through cylinders. Concerning periodic points of minimal period $m$, the existence of the limit 
\begin{equation}\label{eq:limit2}
p:=p^{(2)}_m=\lim_{n\to\infty}\frac{\mu(A_{n+m}(x))}{\mu(A_n(x))}
\end{equation}
 was assumed in order to conclude the asymptotic distribution of the number of visits close to the point. Here we consider this question in the case of $g$-measures. We have the following proposition.

\begin{proposition}\label{prop:polya_gmeasure}
Consider a $g$-measure $\mu$ and a point $x\in\Sigma_B$ of prime period $m\ge1$. If $g$ is continuous at the set of points $\{\sigma^i(x),i=0,\ldots,m-1\}$, then the limiting parameter $p$ defined through \eqref{eq:limit2} exists and 
is given by  $\prod_{i=0}^{m-1}g(\sigma^i(x))$. So in particular, if $\var_k\,g$ vanishes, 
the limiting parameter exists for \emph{any} periodic point.  If moreover $\sum_k\var_k\, g<\infty$ and $g>0$, 
the limiting distribution of the number of visits around $x$  has P\'olya-Aeppli distribution  with parameter $t(1-p)$.
\end{proposition}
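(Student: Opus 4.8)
\emph{Overall strategy.} The plan is to supply the one missing ingredient in the analysis of visits around a periodic point carried out in Subsection~\ref{sec:examples}, namely the existence and value of the limit $p$ in \eqref{eq:limit2}; once $p=\prod_{i=0}^{m-1}g(\sigma^i(x))$ is established, the P\'olya-Aeppli statement follows verbatim from that subsection together with Theorem~\ref{main.theorem}. Writing $g_k(z):=\prod_{j=0}^{k-1}g(\sigma^jz)$, I would start from the identity (valid for any admissible word $a_1^k$, and immediate from $\mathcal{L}_g^*\mu=\mu$ iterated)
\[
\mu([a_1^k])=\int\mathcal{L}_g^k\ind_{[a_1^k]}\,d\mu=\int g_k(a_1^ky)\,\ind\{a_1^ky\text{ admissible}\}\,d\mu(y),
\]
and apply it with $a_1^k=x_1^n$ and with $a_1^k=x_1^{n+m}$, the goal being to compare the two integrals.

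\emph{The key computation.} The central observation is that, since $\sigma^m(x)=x$, one has $\sigma^m(x_1^{n+m}y)=x_1^ny$ for every $y$, and all transitions $B_{x_ix_{i+1}}$ along the periodic orbit equal $1$; this yields the \emph{exact} factorisation $g_{n+m}(x_1^{n+m}y)=G_n(y)\,g_n(x_1^ny)$ with $G_n(y):=\prod_{j=0}^{m-1}g(\sigma^j(x_1^{n+m}y))$, together with $\ind\{x_1^{n+m}y\text{ adm.}\}=\ind\{x_1^ny\text{ adm.}\}$ (using $x_{n+m}=x_n$). Hence $\mu(A_{n+m}(x))$ is the same integral as $\mu(A_n(x))$ but carrying the extra weight $G_n(y)$. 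Now for $0\le j\le m-1$ the sequence $\sigma^j(x_1^{n+m}y)$ agrees with $\sigma^j(x)$ on its first $n+m-j\ge n+1$ coordinates, uniformly in $y$, so continuity of $g$ at the $m$ points $\sigma^0(x),\dots,\sigma^{m-1}(x)$ forces $\sup_y|G_n(y)-G|\to0$ with $G:=\prod_{i=0}^{m-1}g(\sigma^i(x))$. Since $g_n\ge0$, the ratio $\mu(A_{n+m}(x))/\mu(A_n(x))$ is a weighted average of the values $G_n(y)$ and therefore converges to $G$; this gives $p=G$ (the ratio being well defined whenever $\mu(A_n(x))>0$, which holds under $g>0$). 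If $\var_k g\to0$ then $g$ is everywhere continuous, hence $p$ exists at every periodic point.

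\emph{Concluding via Theorem~\ref{main.theorem}.} Now assume in addition $\sum_k\var_k g<\infty$ and $g>0$. Then $\mu$ is the unique $g$-measure and is $\psi$-mixing (recalled in Section~\ref{sec:examples2}, following \cite{walters/1975}), giving assumption~(1) of Theorem~\ref{main.theorem}; exponential decay of $\mu(A_n(x))$ together with $U_n^i=A_i(x)=U_i$ for $i\le n$ gives assumption~(2); and, as computed in Subsection~\ref{sec:examples}, the existence of $p$ yields $\hat\alpha_\ell=p^{\ell-1}$, hence $\alpha_\ell=(1-p)p^{\ell-1}$ and $\sum_k k^2\alpha_k<\infty$, i.e.\ assumption~(3). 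Theorem~\ref{main.theorem} then gives convergence of $W_n$ to the compound Poisson law with $\tilde\lambda_k=\alpha_k-\alpha_{k+1}=(1-p)^2p^{k-1}$, which is by definition the P\'olya-Aeppli distribution with parameter $t(1-p)$ in the sense of Subsection~\ref{rem:compPois}.

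\emph{Main obstacle.} The only nontrivial point is checking that the factorisation above is genuinely exact: neither the admissibility indicator nor the ``bulk'' factors $g(\sigma^j(\cdot))$ with $m\le j\le n+m-1$ may contribute an error term. This is precisely what $m$-periodicity of $x$ buys (the periodic word slides back onto itself under $\sigma^m$, and all its internal transitions are legal), and it is what makes mere qualitative continuity of $g$ at the finitely many points $\sigma^i(x)$ sufficient, with no summable-variation estimate needed for the existence of $p$ itself. A secondary, purely technical, point is disposing of the degenerate case $\mu(A_n(x))=0$, which does not arise once $g>0$.
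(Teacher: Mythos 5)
Your argument is correct and is essentially the paper's own proof in a slightly different packaging: where you iterate the conformality identity $(n+m)$ times and cancel the common factor $g_n(x_1^n y)$ exactly via the cocycle property and $m$-periodicity, the paper telescopes $\mu(A_{n+m}(x))/\mu(A_n(x))$ into the $m$ one-step ratios $\mu([x_i^{n+m}])/\mu([x_{i+1}^{n+m}])$ and sends each to $g(\sigma^{i-1}(x))$ using the same continuity-at-the-orbit input. The reduction of the P\'olya--Aeppli conclusion to Theorem~\ref{main.theorem} ($\psi$-mixing from summable variation and $g>0$, exponential decay of cylinder measures for assumption~(2), and $\alpha_\ell=(1-p)p^{\ell-1}$ for assumption~(3)) is identical to the paper's.
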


\begin{remark}
Let us mention that the existence of the limit was  proven and computed for Axiom A by \cite{pitskel/1991}.
\end{remark}


Let us now consider a specific class of $g$-measures, called \emph{renewal measures}.  Consider the space $\Sigma=\{0,1\}^{\mathbb N}$ and for any $x\in \Sigma$ let $\kappa(x):=\inf\{n\ge0:x_{n+1}=1\}$ count the number of $0$ until the first occurrence of a $1$ in $x$. Now take a sequence of $[0,1]$-valued real numbers $q_i,i\ge0$ and define the function  $\bar g$ by $\bar g(1x)=q_{\kappa(x)}$. A $g$-measure corresponding to $\bar g$ exists under some technical assumptions on the sequence $q_i,i\ge1$, which are automatically granted if we assume that $q_i\ge\epsilon$ for any $i\ge1$. 

\begin{proposition}\label{prop:polya_renewal}
Consider a renewal measure with sequence of parameters $\{q_i\}_{i\ge1}$ which  satisfies $q_i\in[\epsilon,1-\epsilon]$ for some $\epsilon>0$. Then, for any periodic point $x\ne 0^{\infty}$, the limit $p$ defined by \eqref{eq:limit} 
exists and the limiting distribution of the number of visits around $x$  has P\'olya-Aeppli distribution  with parameter $t(1-p)$.

 The same occurs for visits around $0^{\infty}$ if, and only if, $q_i,i\ge1$ converge. 
\end{proposition}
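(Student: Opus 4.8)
The plan is to reduce everything to computing the limit $p = \lim_n \mu(A_{n+m}(x))/\mu(A_n(x))$ for a periodic point $x$ of prime period $m$, since by Proposition~\ref{prop:polya_gmeasure} (or rather the argument preceding it in Subsection~\ref{sec:examples}) the existence of this limit plus the $\psi$-mixing of the renewal measure suffices to conclude the Pólya-Aeppli distribution with parameter $t(1-p)$. First I would recall that a renewal measure with $q_i \in [\epsilon, 1-\epsilon]$ has strictly positive $g$-function bounded away from $0$ and $1$, and I would check that it satisfies the hypotheses needed for $\psi$-mixing; the cleanest route is to observe that $\bar g$ is piecewise constant on the cylinders determined by the position of the first $1$, hence uniformly continuous, and that strict positivity gives a unique $g$-measure with exponential mixing. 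The renewal structure (the process regenerates at each occurrence of a $1$) should in fact make the $\psi$-mixing transparent.

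Next, for the periodic point $x \ne 0^\infty$, I would write $\mu(A_{n}(x)) = \mu([x_1^n])$ and use the Gibbs/chain-rule representation $\mu([x_1^n]) = \mu([x_1^k]) \prod_{i=k}^{n-1} g_i$-type telescoping, where the conditional probabilities are exactly values of $\bar g$ evaluated along the orbit. Concretely, $\mu(A_{n+m}(x))/\mu(A_n(x))$ is a product of $m$ conditional probabilities of the form $\mu([x_1^{j+1}])/\mu([x_1^{j}])$ for $j$ near $n$, each of which converges, as $n \to \infty$, to $\bar g$ evaluated at the relevant shifted orbit point $\sigma^i(x)$ — provided $\bar g$ is continuous at those orbit points. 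Because $x \ne 0^\infty$ is periodic, the orbit $\{\sigma^i(x)\}$ contains a point with $x_1 = 1$, and crucially none of the orbit points is the single discontinuity locus: the only place $\bar g$ can fail to be continuous is at $0^\infty$ (where $\kappa = \infty$ and one needs $q_i \to q_\infty$). Since a periodic point other than $0^\infty$ has a bounded run of zeros, $\kappa$ is locally constant there and $\bar g$ is locally constant, hence continuous at every $\sigma^i(x)$. So $p = \prod_{i=0}^{m-1} \bar g(\sigma^i(x)) \in (0,1)$ exists, and the Pólya-Aeppli conclusion follows from Proposition~\ref{prop:polya_gmeasure}.

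For the point $0^\infty$ itself (period $m=1$), the same telescoping gives $\mu(A_{n+1}(0^\infty))/\mu(A_n(0^\infty)) = \mu([0^{n+1}])/\mu([0^n])$, which is the conditional probability of seeing another $0$ after a run of $n$ zeros, and this equals (up to the mixing error, which vanishes) the value of $\bar g$ on the cylinder $[0^{n}1\cdots]$-type configuration — i.e. it is governed by $q_{n}$ (more precisely $1 - q_n$, the probability of continuing the run). Tracking the normalization carefully, $\lim_n \mu(A_{n+1}(0^\infty))/\mu(A_n(0^\infty))$ exists if and only if $1-q_n$ (equivalently $q_n$) converges as $n \to \infty$. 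This gives the "if and only if": if $q_i \to q_\infty$ the limit $p$ exists and Proposition~\ref{prop:polya_gmeasure} applies; conversely, if $q_i$ does not converge, neither does the ratio, so the parameter $p$ — hence the Pólya-Aeppli parameter — is not well-defined, and one can moreover show (as in \cite{abadi/cardeno/gallo/2015}) that the extremal index limit genuinely fails to exist.

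The main obstacle I expect is the bookkeeping in the second paragraph: establishing that the ratio $\mu(A_{n+m}(x))/\mu(A_n(x))$ really does factor as a product of conditional probabilities that individually converge to the $\bar g$-values, with an error term that is uniform and vanishing. This requires combining the Gibbs property (two-sided bounds on $\mu([x_1^n])/(\prod g)$) with the decay of $\text{var}_k\,\bar g$ — and for $\bar g$ piecewise constant one must be slightly careful that "continuity at the orbit points" is exactly what makes the relevant variations vanish along the sequence, even though $\text{var}_k\,\bar g$ need not tend to $0$ globally when the $q_i$ do not converge. Isolating the dependence on $q_n$ in the $0^\infty$ case, and showing the convergence of the ratio is equivalent to convergence of $q_n$ rather than merely implied by it, is the delicate "only if" direction.
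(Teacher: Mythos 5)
Your computation of the limit $p$ is essentially correct and matches the mechanism of Proposition~\ref{prop:polya_gmeasure}: for a periodic $x\ne 0^\infty$ every orbit point carries only a bounded run of zeros, so $\bar g$ is locally constant (hence continuous) along the orbit and the telescoped ratio converges to $\prod_{i}\bar g(\sigma^i x)$; for $0^\infty$ the ratio is governed by $1-q_n$ and converges if and only if $q_n$ does. The paper outsources exactly this existence/characterisation step to Theorem~3.2 of \cite{abadi/cardeno/gallo/2015}, so that part of your plan is fine.

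The genuine gap is in how you verify the mixing hypothesis, condition (1) of Theorem~\ref{main.theorem}. You claim that $\bar g$ is ``piecewise constant on the cylinders determined by the position of the first $1$, hence uniformly continuous,'' and from there that the measure is $\psi$-mixing via the standard summable-variation route. This is false for a general admissible sequence $\{q_i\}\subset[\epsilon,1-\epsilon]$: the oscillation of $\bar g$ on the cylinder $[10^k]$ is of order $\sup_{j,j'\ge k}|q_j-q_{j'}|$, so $\var_k\,\bar g\to0$ if and only if the $q_i$ converge --- as you yourself concede two sentences later when you locate a discontinuity of $\bar g$ at $0^\infty$ in the non-convergent case. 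Thus precisely in the regime where the first statement of the proposition has content (no convergence of $q_i$, hence no uniform continuity, hence no summable variation), your route to $\psi$-mixing collapses and the ``if moreover'' clause of Proposition~\ref{prop:polya_gmeasure} is unavailable. The paper instead imports the exponential left $\phi$-mixing of renewal measures from \cite{abadi/cardeno/gallo/2015}, notes that left $\phi$-mixing already suffices for the non-invertible shift by Remark~\ref{rem:bothsides} (and that reversibility of the renewal measure gives right $\phi$-mixing with the same rate anyway), and obtains condition (2) from $\mu(A_j(x))\le(1-\epsilon)^j$. Some such external input on mixing is unavoidable here --- it cannot be derived from the regularity of $\bar g$, which is exactly why the paper stresses that the interest of this example lies in its being $\phi$-mixing rather than fitting the continuous-$g$ framework.
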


The main interest of this example lays in the fact that it is $\phi$-mixing.

\subsection{Existence of the extremal index}

Due to its relation to the so-called extremal index, the question of (non-)existence of the limit \ref{eq:limit2} was investigated recently in \cite{abadi2019clustering}. According to Proposition \ref{prop:polya_gmeasure}, vanishing variation guarantees existence of the extremal index at any periodic point. So if we want to characterise systems for which the extremal index does not exist, we have to get out of the classical setting of $g$-measures in which $g$ is assumed uniformly continuous. This is what we discuss now. 

First, let us observe that Theorem 3.2 of \cite{abadi/cardeno/gallo/2015} completely solved the question of the existence (and computation) of the limiting parameters in the case of renewal measures. Something interesting which is shown therein is that the  renewal measure  provides a simple situation in which $p$ does not exist although the measure enjoys good mixing. This is the case if we take  $q_{\kappa(x)}=\epsilon_1$ if $\kappa(x)$ is odd and $\bar g(x)=\epsilon_2$ otherwise: the limit $\lim\frac{\bar\mu([0^{n+1}])}{\bar\mu([0^n])}$ does not exist (see also Proposition \ref{prop:polya_renewal} above). With this choice of parameters, the measure is  $\phi$-mixing with exponentially decaying rate $\phi$, but we easily see that $\bar g$ has a discontinuity (with respect to the product topology) at the point $0^\infty$. 

The first information we get from this example is that good mixing properties are not enough to ensure existence of the extremal index, and that this existence is perhaps related to the continuity properties of the $g$-function. Technically, the discontinuity of $\bar g$ at $0^{\infty}$ is an \emph{essential discontinuity}, borrowing the terminology used in the context of statistical physics \citep{fernandez2005gibbsianness}. This is a discontinuity which cannot be removed by changing function $\bar g$ on a null $\bar\mu$-measure subset of $\Sigma$ \citep{ferreira2020non}. 

More generally, for a $g$-measure $\mu$ with $g$-function $g$, the non-existence of the limit $\lim_n{\mu([a_1^{n}])}/{\mu([a_1^{n-1}])}$ implies that $g$ has an essential discontinuity at $y_1^\infty$, but the converse is not necessarily true. So even  when we focus on the easier  case of points of period $1$, the non-existence of the extremal index implies on an essential discontinuity of $g$ at $a^{\infty}$, but the converse is not true in general. This non-equivalence is spectacularly clear with the following example.

\subsection{The Furstenberg $\&$ Furstenberg example} As far as we know, the following example is due to \cite{furstenberg1960stationary} (see Chapter 3.12 therein). On $\mathcal{A}^\mathbb{N}=\{-1,+1\}^\mathbb{N}$, take the product measure $\mu$ with marginal $\mu([+1])=\epsilon=1-\mu([-1])$. Next, consider the function $\Pi:\{-1,+1\}^\mathbb{N}\rightarrow\{-1,+1\}^\mathbb{N}$ defined through $(\Pi(x))_i=x_ix_{i+1}$, the product of two consecutive coordinates. The measure $\nu:=\mu\circ\Pi^{-1}$ has a $g$-function which is essentially discontinuous everywhere \citep{verbitskiy/15,ferreira2020non}. Let us now write down its $g$-function $g^\star$.

For any fixed $j\in \mathbb N$, when the limit exists, let
\[
d(x_j^\infty)=\lim_{n\to\infty}\frac{\#\{j\le i\le n, x_i=-1\}}{n}
\]
denote the asymptotic density of $-1$ in the sequence $x_j^\infty$  with $x_i\in\{-1,+1\},i\ge j$.  For any fixed $y\in\{-1,+1\}^\mathbb{N}$ the preimage set $\Pi^{-1}(y)$ contains two elements, that we denote by $x^+(y)$ for the one starting by $+1$ and $x^-(y)$ for the one starting by $-1$.  Now, let
\[
G:=\{y\in\{-1,+1\}^{\mathbb N}:d(x^+(y))=\epsilon\,\,\text{or}\,\,d(x^-(y))=\epsilon\}.
\]
Then by the law of large numbers for the product measure $\mu$, we have that $\nu(G)=1$. It is proved in \cite{ferreira2020non} that for any $y\in G$, 
\begin{equation}\label{eq:conv_furst}
\frac{\nu([1y_2^n])}{\nu([y_2^n])}\rightarrow \left\{\begin{array}{ccc}
		\epsilon&\text{if}&d(x^-(y))=\epsilon\\
		1-\epsilon&\text{if}&d(x^+(y))=\epsilon.
	\end{array}\right.
\end{equation}
This defines $g^\star(1\sigma(y))$ for $y\in G$ by the Martingale Theorem. For $y\in G^c$, $g^\star(1\sigma(y))$ may be defined arbitrarily, as this set has null $\nu$-measure and the choice will not affect the conclusion of everywhere essential discontinuity. 

The following simple result proves that, despite of the terrible (dis)continuity properties of $g^\star$, the limiting quantities needed to apply our theorems exist. 
\begin{proposition}\label{prop:furst}
Consider the measure $\nu$ with $g$-function $g^\star$ as defined above, with $\epsilon\ne1/2$. Then, for any periodic point $x$ of prime period $m\ge1$, the limiting quantity $p$ defined by \eqref{eq:limit} exists
and the limiting distribution of the number of visits around $x$  has P\'olya-Aeppli distribution  with parameter $t(1-p)$.
\end{proposition}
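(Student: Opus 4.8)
The plan is to reduce the statement, via Proposition~\ref{prop:polya_gmeasure}'s mechanism, to verifying that the limit $p=\lim_n \mu(A_{n+m}(x))/\mu(A_n(x))$ in \eqref{eq:limit} exists for every periodic point $x$ of the shift, together with the mixing and approximation hypotheses (1) and (2) of Theorem~\ref{main.theorem}. Since $\nu=\mu\circ\Pi^{-1}$ is the pushforward of an i.i.d.\ product measure under the block factor map $\Pi$, I would first record the structural facts we need: $\nu$ is $\psi$-mixing (indeed a function of an i.i.d.\ process through a finite window is still $\psi$-mixing, so condition (1) holds), and since $U_n=A_n(x)$ is a single cylinder we have $U_n^i=U_n$ for $i\le n$ and $\mu(A_n(x))$ decays exponentially, so condition (2) is automatic, exactly as in Subsection~\ref{sec:examples}. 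Thus only the existence of $p$ requires work.

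The key computation is to understand the ratio $\nu([y_1^{n+m}])/\nu([y_1^n])$ when $y$ is the periodic point $x=(x_1\cdots x_m)^\infty$. Write $\nu([y_1^N])=\mu(\Pi^{-1}[y_1^N])$; the preimage under $\Pi$ of a cylinder $[y_1^N]$ of $\nu$ is a union of exactly two cylinders of $\mu$, corresponding to the two choices of the first coordinate $x_1^\pm=\pm1$, with the subsequent coordinates then determined by $z_{i+1}=y_i z_i$. So $\nu([y_1^N])=\mu([z^+\!\!\restriction_{1}^{N+1}])+\mu([z^-\!\!\restriction_{1}^{N+1}])$, where $z^\pm$ are the two $\mu$-sequences over $x^\pm$. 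Because $\mu$ is a product measure with $\mu([+1])=\epsilon$, each term factors: $\mu([z\restriction_1^{N+1}])=\epsilon^{\#\{i\le N+1:\,z_i=+1\}}(1-\epsilon)^{\#\{i\le N+1:\,z_i=-1\}}$. The point is that for the periodic $y$, the partial products $z^\pm_i$ are themselves eventually periodic in $i$, so the counts of $+1$'s and $-1$'s among $z^\pm\restriction_1^{N+1}$ grow linearly with explicit asymptotic densities $\delta^\pm$ (determined by the parity pattern of $x$ inside one period); thus $\mu([z^\pm\restriction_1^{N+1}])$ decays like $\rho_\pm^{\,N}$ for explicit constants $\rho_\pm=\epsilon^{\delta^\pm}(1-\epsilon)^{1-\delta^\pm}$. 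Since $\epsilon\ne1/2$, generically $\rho_+\ne\rho_-$ and one term dominates along appropriate residues of $N$; I would then check that after passing from $N$ to $N+m$ the dominant branch and its decay rate per block of length $m$ are well defined, giving
\[
\frac{\nu([y_1^{N+m}])}{\nu([y_1^N])}\longrightarrow \rho_*^{\,m}
\]
for the dominant $\rho_*\in\{\rho_+,\rho_-\}$, which is exactly the limit $p$. (When the two densities coincide one must argue directly that the sum of two comparable geometric terms still has a ratio limit, which it does since both have the same rate.) One should also separately dispose of $0^\infty$-type degeneracies — here the ``bad'' fixed point would be the one whose $\Pi$-preimage lands in the zero-density set, but since $\epsilon\ne1/2$ the densities $\delta^\pm$ of the partial-product sequences are never $\epsilon$ for a genuinely periodic $y$ unless $y$ is a fixed point with a special parity structure, a finite case-check.

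Once $p$ exists, Proposition~\ref{prop:polya_gmeasure}'s argument (or directly the computation of $\hat\alpha_\ell=p^{\ell-1}$, $\alpha_\ell=(1-p)p^{\ell-1}$ in Subsection~\ref{sec:examples}) gives that assumption (3) of Theorem~\ref{main.theorem} holds and that the limiting parameters are $\tilde\lambda_\ell=(1-p)^2 p^{\ell-1}$, so by Subsection~\ref{rem:compPois} the limit law is P\'olya-Aeppli with parameter $t(1-p)$. The main obstacle I anticipate is purely bookkeeping in the key computation: one has to track how the parity pattern of the finite word $x_1^m$ propagates through the telescoping relation $z_{i+1}=y_i z_i$, and handle the residue classes of $N$ modulo the (possibly doubled) period so that the ratio $\nu([y_1^{N+m}])/\nu([y_1^N])$ genuinely converges rather than oscillating; the borderline sub-case $\rho_+=\rho_-$ needs the extra remark that a ratio of sums of two geometric sequences with equal rate still converges. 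Everything else is a direct application of the machinery already established.
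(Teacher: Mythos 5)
Your overall strategy coincides with the paper's: conditions (1) and (2) of Theorem~\ref{main.theorem} are disposed of exactly as you say ($\nu$ is $\psi$-mixing as a finite-window factor of an i.i.d.\ measure, and cylinder measures decay exponentially), and the entire weight of the proof is on the existence of the limit \eqref{eq:limit}. For that step you work directly with $\nu([y_1^N])=\mu([z^+|_1^{N+1}])+\mu([z^-|_1^{N+1}])$, the sum over the two $\Pi$-preimages, whereas the paper telescopes the ratio into one-step conditional probabilities via \eqref{eq:sequences_limit} and evaluates each factor with an explicit formula from \cite{ferreira2020non}; the two computations are algebraically the same, since that formula is just your two-term sum in disguise. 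In the main case, where the density of $+1$'s in $z^+$ differs from $1/2$ (equivalently $\rho_+\neq\rho_-$), your domination argument is correct and yields the same value of $p$ as the paper's \eqref{eq:limitingFurst}.

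The gap is the parenthetical sentence dismissing the case $\rho_+=\rho_-$. The two summands are \emph{not} geometric sequences: $\mu([z^\pm|_1^{N+1}])=\epsilon^{a_N^\pm}(1-\epsilon)^{N+1-a_N^\pm}$ equals $\rho_\pm^{N}$ times a periodic, generally non-constant prefactor, because the counts $a_N^\pm$ equal $\delta^\pm N$ only up to a bounded oscillating correction. When the two rates coincide these prefactors do not cancel in the ratio, and the degenerate case is not exotic: whenever $\prod_{j=1}^m y_j=-1$ the preimages $z^\pm$ have period $2m$ with second half the negation of the first, hence $+1$-density exactly $1/2$ and $\rho_+=\rho_-=\sqrt{\epsilon(1-\epsilon)}$. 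The fixed point $y=(-1)^\infty$ is a concrete counterexample to your claim: its preimages are the two alternating sequences, so $\nu([(-1)^n])=2(\epsilon(1-\epsilon))^{(n+1)/2}$ for $n$ odd and $(\epsilon(1-\epsilon))^{n/2}$ for $n$ even, and the ratio $\nu([(-1)^{n+1}])/\nu([(-1)^n])$ alternates between $1/2$ and $2\epsilon(1-\epsilon)$, which are distinct precisely because $\epsilon\neq1/2$. So the limit $p$ does not exist there, and no amount of bookkeeping over residue classes will produce it. You are in good company: the paper's own proof has the same blind spot, since its dichotomy \eqref{eq:conv_furst2} only covers the cases where $(\tfrac12-\epsilon)(\tfrac12-d)$ is strictly positive or strictly negative ($d$ the limiting density of the preimage sequence) and is silent when $d=1/2$, which is exactly the situation above. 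As stated, the proposition needs to exclude periodic points whose $\Pi$-preimages have density $1/2$ (in particular all $y$ with $\prod_{j=1}^m y_j=-1$); within the complementary, non-degenerate class your argument and the paper's both go through.
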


To conclude on this example, we observe that the value $p$ is explicitly computed in the proof of the proposition (see \eqref{eq:limitingFurst}).

\subsection{Temporal synchronisation for $g$-measures}\label{sec:temporal_sync}
Consider $m$ $g$-measures $\mu_1,\ldots,\mu_m$  on $\Sigma_B$, respectively with functions $g_1,\ldots,g_m$, and define the product measure $\hat\mu=\mu_1\otimes\ldots\otimes\mu_m$  on $\Omega=\Sigma_B^m$ or even $\Sigma^m$. 
Let 
$\hat\sigma:\Omega\circlearrowleft$ be the shift map on the product space. 
For any $n\ge1$, $S_n=\bigcup_{A\in\mathcal{A}^n}A^m$ is the $n$-cylinder
neighbourhood of the diagonal $\Delta=\{(x,\ldots,x): x\in\Sigma_B\}\subset \Sigma_B^m$ (see Figure \ref{fig2} for a picture with $m=2$). 
Observe that 
\[
\hat \sigma^{-i}S_n=\{(x^{(1)},\ldots,x^{(m)}):x^{(1)}_i\ldots x^{(1)}_{n+i-1}=x^{(j)}_i\ldots x^{(j)}_{n+i-1},j=2,\ldots,m\},
\]
that is, a visit in $S_n$ can be interpreted as a synchronisation lasting $n$ time units of the symbols of the dynamical systems, therefore justifying the name ``temporal synchronisation''. 

A natural first problem is, in the above ``uncoupled'', or ``non-interacting'' setting ($\hat\mu$ is the product measure), to study the distribution of the number the visits to $S_n$ as $n$ diverges, that is, visits to longer and longer synchronised pieces of orbits. However, it would be even more interesting to study the same question for interacting $g$-measures.  In full generality, for any $m\ge2$, any $g$-measure on the product space $\Omega=(\mathcal A^m)^\mathbb N$ can be considered the coupling of the $m$ coordinates $g$-measures. That is, we see $\Omega$ as $\Sigma^m$ instead of seeing it as $(\mathcal A^m)^\mathbb N$. In other words, in this general setting, we are studying the synchronisation of the coordinates.  

Theorem \ref{theo:tempsync} below is stated with this abstract approach because it is more general, and next, Corollary \ref{coro:sync_indep} will specialise to the non-interacting case. Finally, we will rapidly discuss two explicit ways to make $g$-measures interact. 

Let $W_n$ count the number of  synchronisations on the Kac scaling
\[
W_n:=\sum_{i=0}^{t/\hat{\mu}(S_n)}\ind_{S_n}\circ \hat\sigma^i.
\]

%
%
\begin{theorem}\label{theo:tempsync}
As before let $B$ be irreducible and aperiodic. Then
\begin{enumerate}
\item On the product space  $\Omega=\Sigma^m$, for some $m\ge2$, let $\hat\mu$ be a $\hat\sigma$-invariant $\hat{g}$-measure. Assume that $\hat g>0$ has summable  variation.

Then we have that 
\[
p:=\lim_n\frac{\hat\mu(S_n\cap \hat \sigma^{-1}S_n)}{\hat\mu(S_n)}
\]
exists and $\hat\mu(W_n\in\cdot)$ converges to a P\'olya-Aeppli distribution with parameter $t(1-p)$.
\item On the product space  $\Omega=\Sigma_B^m$, for some $m\ge2$, let $\hat\mu$ be a 
$\hat\sigma$-invariant $\hat{g}$-measure. Assume that the function $g^\Delta:\Sigma_B\to\mathbb{R}$ has exponentially decaying 
variations, where $g^\Delta(x)=\hat{g}(x,x,\dots,x)$. 

Then, $\hat\mu(W_n\in\cdot)$ converges to a P\'olya-Aeppli distribution with parameter $t(1-p)$ where 
$p=e^{P\left(\log g^\Delta\right)}<1$ with $P(\log g^\Delta)$ being the topological pressure of $\log g^\Delta$. 
\end{enumerate}\end{theorem}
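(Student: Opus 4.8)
The plan is to verify the three hypotheses of Theorem \ref{main.theorem} for the nested sequence $U_n = S_n$ (which indeed lies in $\sigma(\hat\sigma^{-\bullet}\mathcal{A}^n)$, being a finite union of $n$-cylinders of the product partition $\mathcal{A}^m$) and to identify the resulting compound Poisson parameters as those of a P\'olya-Aeppli distribution, i.e.\ to show $\alpha_k = (1-p)p^{k-1}$. In both parts the key structural observation is that the diagonal $\Delta$ is invariant under $\hat\sigma$, so the period of $S_n$ stabilises at $\tau_\infty = 1$ for all large $n$, and hence $S_n \cap \hat\sigma^{-1}S_n = S_{n+1}$. This is exactly the situation analysed in Subsection \ref{sec:examples} for a periodic point: once the limit
\[
p = \lim_n \frac{\hat\mu(S_n \cap \hat\sigma^{-1}S_n)}{\hat\mu(S_n)} = \lim_n \frac{\hat\mu(S_{n+1})}{\hat\mu(S_n)}
\]
is shown to exist, the argument there gives $p_{(\ell-1)}^{(\ell)} = p^{\ell-1}$, all other $p_i^{(\ell)}$ vanishing, whence $\hat\alpha_\ell = p^{\ell-1}$, $\alpha_\ell = (1-p)p^{\ell-1}$, and condition (3) ($\sum_k k^2\alpha_k < \infty$) is automatic provided $p<1$. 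So the real work is (a) establishing mixing, (b) establishing assumption (2), and (c) proving the limit $p$ exists with the stated value.

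For part (1): the hypothesis that $\hat g>0$ has summable variation gives, by the result of Walters quoted at the end of Section \ref{sec:examples2}, that $\hat\mu$ is $\psi$-mixing, so condition (1) holds. For condition (2), since $S_n$ is a union of $n$-cylinders of $\mathcal{A}^m$, its outer $j$-cylinder approximation is $S_n^j = S_j$ for $j\le n$, so $\sum_{i=k}^n \hat\mu(S_n^i) = \sum_{i=k}^n \hat\mu(S_i)$, and I need $\hat\mu(S_i)$ to be summable in $i$ with a tail bound uniform in $n$ — but this follows because $\hat\mu(S_i) \le C p^i$ for a suitable constant once one knows (from $\hat g>0$, summable variation) that the restriction of $\hat\mu$ to diagonal-cylinders decays geometrically at rate $p<1$; more precisely $\hat\mu(S_n) = \sum_{A\in\mathcal{A}^n}\hat\mu(A^m)$ and Gibbs-type bounds for the $\hat g$-measure give exponential control. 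Then $p$ itself: writing $\hat\mu(S_{n+1})/\hat\mu(S_n)$ and using the $g$-measure identity $\hat\mu([a_1^{n+1}]) = \int_{[a_1^{n+1}]} \hat g(a_1 \hat\sigma x)\cdots$-type expansions (really: $\hat\mu(A^m \cap \hat\sigma^{-1}(\cdot))$ conditioned appropriately), summable variation forces the ratio to converge; one identifies the limit via the transfer operator for the ``diagonal potential'' $g^\Delta(x) = \hat g(x,\dots,x)$, obtaining $p = e^{P(\log g^\Delta)}$. Since $\log g^\Delta \le 0$ is not identically the constant giving pressure $0$ (the normalisation $\sum_a \hat g(ax)=1$ is for the full $M^m$-alphabet, not for $g^\Delta$), one gets $p<1$.

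For part (2): here $\hat g$ need not be positive or have summable variation, only $g^\Delta$ is assumed to have exponentially decaying variation. The subtlety is that $\psi$-mixing of $\hat\mu$ may fail; however, the quantities that actually matter — $\hat\mu(S_n)$, $\hat\mu(S_n\cap\hat\sigma^{-1}S_n)$, and more generally $\hat\mu(\hat\sigma^{-i_1}S_n\cap\cdots)$ — all involve only the restriction of $\hat\mu$ to the diagonal, which is governed by $g^\Delta$; one shows this restriction behaves like a $g^\Delta$-Gibbs measure on $\Sigma_B$, deducing both the geometric decay needed for assumption (2) and the convergence $p = \lim \hat\mu(S_{n+1})/\hat\mu(S_n) = e^{P(\log g^\Delta)}$ via the Ruelle-Perron-Frobenius theorem for the transfer operator $\mathcal{L}_{g^\Delta}$ (exponentially decaying variation of $\log g^\Delta$ gives a spectral gap). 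For condition (1) — mixing of $\hat\mu$ as needed by Theorem \ref{main.theorem} — one needs a little care: I would argue that what Theorem \ref{main.theorem}'s proof really uses about $U_n = S_n$ can be supplied from exponential decay of correlations along the diagonal, or alternatively restrict attention to the case where $\hat\mu$ inherits enough mixing from its marginals; this is the point I expect to require the most delicate bookkeeping. The main obstacle overall is precisely this: in part (2) the global measure $\hat\mu$ carries no assumed mixing, so every estimate feeding into Theorems \ref{theorem.compound.poisson}–\ref{main.theorem} must be re-derived ``along the diagonal'' using only $g^\Delta$, and one must check that the error terms in \eqref{eq:th1} (or its $\psi$-version) — in particular the term $\sum_{j=K/2}^n \hat\mu(S_n^j)$ and the correlation term — still vanish in the limit with this weaker input.
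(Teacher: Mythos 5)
Your overall architecture (verify the three hypotheses of Theorem \ref{main.theorem} for $U_n=S_n$, use $\bigcap_{i=0}^{k}\hat\sigma^{-i}S_n=S_{n+k}$ to reduce everything to the single ratio $p=\lim_n\hat\mu(S_{n+1})/\hat\mu(S_n)$, and conclude $\hat\alpha_{k+1}=p^k$, hence P\'olya-Aeppli) is exactly the paper's, and for part (2) your route — treat the restriction of $\hat\mu$ to diagonal cylinders as governed by $g^\Delta$ and apply Ruelle--Perron--Frobenius to $\mathcal{L}_{g^\Delta}$, whose spectral gap comes from the exponentially decaying variation — is the paper's proof (the paper builds the measures $\nu_n=Z_n^{-1}\sum_{|\gamma|=n}\hat\mu([\gamma]\times[\gamma])\delta_\gamma$, shows $\nu_n\to\nu$ for the conformal measure of $e^{-P}\mathcal{L}_{g^\Delta}$, and gets $\hat\alpha_{k+1}=e^{kP}$; it also obtains $P<0$ from $P(\log\hat g)=0$ plus positive topological entropy, which is cleaner than your normalisation remark). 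Your worry about the mixing hypothesis in part (2) is fair — the paper's own proof simply invokes $\psi$-mixing of $\hat\mu$ there — so that is not a defect of your argument relative to the paper's.

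The genuine gap is in part (1), at the only step that carries real content: the \emph{existence} of the limit $p$. You propose to identify $p$ via the transfer operator of the diagonal potential $g^\Delta$, asserting that ``summable variation forces the ratio to converge'' and that $p=e^{P(\log g^\Delta)}$. Under summable (not exponential) variation you have no spectral gap for $\mathcal{L}_{g^\Delta}$, and $g^\Delta$ is not normalised, so the asymptotics $\sum_{|\alpha|=k}g^\Delta_k(\alpha x)\sim Ce^{kP}$ that you need is precisely what must be proved, not a consequence of summability; moreover part (1) of the theorem does not even claim the formula $p=e^{P(\log g^\Delta)}$, only existence. The paper's mechanism here is entirely different and is the idea your sketch is missing: introduce the factor map $\Pi:\Sigma\times\Sigma\to\{0,1\}^{\mathbb N}$, $(\Pi(x,y))_i=\ind_{x_i=y_i}$, note that $\hat\mu(S_n)=\hat\mu\circ\Pi^{-1}([1^n])$, invoke Verbitskiy's theorem that a factor of a $g$-measure with $\hat g>0$ and summable variation again admits a continuous, strictly positive $g$-function, and then read off $p=\lim_n\hat\mu\circ\Pi^{-1}([1^{n+1}])/\hat\mu\circ\Pi^{-1}([1^n])$ as the value of that $g$-function at $1^\infty$ (via \cite{PPW82}); positivity of the factor $g$-function on the symbol $0$ also gives $p<1$ and the exponential decay $\hat\mu(S_j)\le(\sup g_\Pi(1\cdot))^j$ needed for assumption (2), where your appeal to unspecified ``Gibbs-type bounds'' is circular as written. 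Without this (or an equally concrete substitute), part (1) of your proposal does not go through.
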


\begin{remark}\label{rem:markov_pressure0}
In the particular case in which $\hat g$  only depends on the two first coordinates, $\hat\mu$ is a Markov chain with matrix $\hat Q(a,b):=\hat g(x)$ for any $x \in \Sigma^m$ such that $x_1=b$ and $x_2=a$. In this case, $P(\log g^{\Delta})$ is $\log \rho$ where $\rho$ is the largest positive eigenvalue of the matrix $Q^\Delta$. 
\end{remark}

We have the following direct corollary of item (2) of the preceding theorem.

\begin{corollary}\label{coro:sync_indep}
On the subshift space $\Sigma_B$, consider $m\ge2$ independent $g$-measures $\mu_i,i=1,\ldots,m$, 
with $g$-functions $g^{(i)}$ satisfying $g^{(i)}>0$ and having exponentially vanishing variation. 

Then, $\hat\mu(W_n\in\cdot)$ converges to a P\'olya-Aeppli distribution with parameter $t(1-p)$ where 
$p=e^{P\left(\sum_{i}\log g^{(i)}\right)}$, with $P(\sum_{i}\log g^{(i)})$ being the topological pressure of $\sum_{i=1}^m\log g^{(i)}$. 
\end{corollary}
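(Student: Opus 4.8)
The plan is to derive Corollary \ref{coro:sync_indep} directly from item (2) of Theorem \ref{theo:tempsync} by identifying the product $g$-measure $\hat\mu=\mu_1\otimes\cdots\otimes\mu_m$ as a $\hat\sigma$-invariant $\hat g$-measure with $\hat g(x^{(1)},\dots,x^{(m)})=\prod_{i=1}^m g^{(i)}(x^{(i)})$, and then computing the diagonal function $g^\Delta$ and its topological pressure. First I would check that $\hat g$ is indeed a $g$-function on $\Sigma_B^m$: the normalisation $\sum_{a\in\mathcal A^m}\hat g(a\,\hat\sigma(x))=\sum_{(a_1,\dots,a_m)}\prod_i g^{(i)}(a_i\sigma(x^{(i)}))=\prod_i\left(\sum_{a_i}g^{(i)}(a_i\sigma(x^{(i)}))\right)=\prod_i 1=1$ follows by factoring the sum over the product alphabet, and the fact that $\hat\mu$ is the corresponding $g$-measure is the standard statement that a product of $g$-measures is the $g$-measure of the product $g$-function (it is $\hat\sigma$-invariant and satisfies the conditional-expectation characterisation coordinatewise). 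I would then verify $\hat g>0$ (immediate from $g^{(i)}>0$) and that $\hat g$ has exponentially decaying variation: since $\var_k\hat g\le\sum_{i=1}^m\var_k g^{(i)}\cdot\prod_{j\ne i}\|g^{(j)}\|_\infty$ and each $\var_k g^{(i)}$ decays exponentially, so does $\var_k\hat g$; in particular $g^\Delta(x)=\hat g(x,\dots,x)=\prod_{i=1}^m g^{(i)}(x)$ has exponentially decaying variation as well, so the hypothesis of item (2) is satisfied.

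Next I would apply item (2) of Theorem \ref{theo:tempsync}: it yields convergence of $\hat\mu(W_n\in\cdot)$ to a P\'olya--Aeppli distribution with parameter $t(1-p)$ where $p=e^{P(\log g^\Delta)}$. The only remaining task is to rewrite $P(\log g^\Delta)$. Since $\log g^\Delta=\sum_{i=1}^m\log g^{(i)}$ as functions on $\Sigma_B$ (with $g^{(i)}$ evaluated at the same point), we have $P(\log g^\Delta)=P\!\left(\sum_{i=1}^m\log g^{(i)}\right)$ literally, which is exactly the claimed expression. This already finishes the proof; the statement is essentially a matter of unwinding definitions.

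The one point that deserves a sentence of care — and which I expect to be the mildest obstacle — is the precise meaning of $\sum_i\log g^{(i)}$ as a potential on $\Sigma_B$: each $g^{(i)}$ is a function on $\Sigma_B$, and in $g^\Delta(x)=\hat g(x,\dots,x)$ all coordinates are set equal to the same $x$, so the resulting potential is the honest pointwise sum $x\mapsto\sum_{i=1}^m\log g^{(i)}(x)$, not a sum of functions of different variables. With exponentially decaying variation this potential has a well-defined topological pressure and a unique equilibrium state, so $P(\sum_i\log g^{(i)})$ is unambiguous. I would also note in passing that, unlike the individual $g^{(i)}$ which satisfy $P(\log g^{(i)})=0$, the summed potential need not be normalised, so in general $p=e^{P(\sum_i\log g^{(i)})}$ can be strictly less than $1$ (consistent with the appearance of genuine clustering); when the $g^{(i)}$ are all equal this pressure is $-(m-1)$ times a R\'enyi-type entropy, recovering the remark made in the introduction. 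No serious difficulty arises beyond bookkeeping, since all the analytic work is already contained in Theorem \ref{theo:tempsync}(2).
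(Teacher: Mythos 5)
Your proposal is correct and follows exactly the route the paper takes: the paper's own justification is the one-line observation that in the uncoupled case $\hat g(x^{(1)},\dots,x^{(m)})=\prod_{i=1}^m g^{(i)}(x^{(i)})$ inherits positivity and exponentially decaying variation from the $g^{(i)}$'s, so that Theorem \ref{theo:tempsync}(2) applies with $g^\Delta=\prod_i g^{(i)}$ and hence $P(\log g^\Delta)=P(\sum_i\log g^{(i)})$. You merely spell out the bookkeeping (normalisation of $\hat g$, the telescoping variation bound, and the meaning of the summed potential on the diagonal) that the paper leaves implicit.
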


The proof of this corollary is direct as, in the uncoupled case,  $\hat{g}(x_1, x_2, \dots, x_m)=\prod_{i=1}^mg^{(i)}(x_i)$ and therefore $\hat g$ inherits the regularity and mixing properties of the $g^{(i)}$'s. 

\begin{remark}
The R\'enyi entropy of order $q\in\mathbb R$ is defined as the limit
\[
\mathcal R_\mu(q)=-\lim_{n\to\infty}\frac{1}{qn} \log \sum_{[x_1^n]}  \mu([x_1^n])^{q+1}
\]
when it exists.
Proposition 7 in \cite{abadi2019complete} states that it exists and equals $-\frac{P((1+q)\log g)}q$ as long as $g$ is continuous. So in the case of the synchronization of $m$ independent copies of the same $g$-measure, Corollary \ref{coro:sync_indep} states that the parameter of the P\'olya-Aeppli asymptotic distribution is $-(m-1)\mathcal R_\mu(m-1)$.  
\end{remark}

\begin{remark}\label{rem:markov_pressure}
In the particular case in which $g^{(i)},i=1,\ldots,m$  only depend on the two first coordinates, $\mu^{(i)}, i=1,\ldots,m$ are Markov chains with matrices $Q^{(i)}(a,b):=g^{(i)}(x)$ for any $x$ such that $x_1=b$ and $x_2=a$. In this case, $P(\sum_{i}\log g^{(i)})$ is $\log \rho$ where $\rho$ is the largest positive eigenvalue of the matrix $Q^\Delta$ defined through $Q^\Delta(a,b)=\prod_{i=1}^mQ^{(i)}(a,b)$.

As an example, consider $m=2$ with $\mathcal A=\{0,1\}$ and take $Q^{(1)}(0,0)=0.2$, $Q^{(1)}(1,1)=0.7$, $Q^{(2)}(0,0)=0.8$, $Q^{(2)}(1,1)=0.9$. Then, we have $Q^\Delta(0,0)=0.16=Q^\Delta(0,1)$, $Q^\Delta(1,0)=0.03$, $Q^\Delta(1,1)=0.63$, and in particular $p=\frac{16}{25}$. 
\end{remark}

Theorem \ref{theo:tempsync} is abstract because it is not stated in terms of the interaction of (possibly distinct) given $g$-measures. The natural question now is  \emph{how to make $g$-measures interact?}
A direct application of the \emph{coupled map lattice approach} used for instance in \cite{faranda2018extreme,haydn2020limiting} (see also Subsection \ref{subsec:uncoupled} below) does not seem to make much sense in the setting of $g$-measures. An observation at this point is that we prefer to use the terminology ``interacting $g$-measures'' instead of ``coupled $g$-measures'', because the second one has a precise definition in stochastic processes, which does not necessarily corresponds to what we want here.

We consider two ways. The first way to make $g$-measures interact is through a coupling of their $g$-functions, coupling in the sense of stochastic processes as we now explain.  Suppose we have $m\ge2$ possibly distinct $g$-functions $g^{(1)},\ldots,g^{(m)}$ on $\Sigma$, and use the notation $\vec{x}_k=(x_k(1),\ldots,x_k(m))\in\mathcal A^m$, $k\ge1$, and $\vec{x}=(\vec{x}_1\vec{x}_2\ldots)$. Then, a $g$-function $\hat g$ on $\Sigma^m$ is said to be a coupling $g$-function of the $g^{(k)}$'s if, for any $k=1,\ldots,m$, any $a\in\mathcal A$ and any $(\vec{x}_2\vec{x}_3\ldots)$
\[
\sum_{\vec{x}_1:x_1(k)=a}\hat g(\vec{x})=g^{(k)}(ax_2(k)x_3(k)\ldots).
\] 
The $g$-measure $\hat \mu$ associated to $\hat g$ is then automatically a coupling of the $g$-measures $\mu^{(k)}$ associated to $g^{(k)}$, $k=1,\ldots,m$, in the sense that the $k^{\text{th}}$ marginal of $\hat\mu$ is equally distributed to $\mu^{(k)}$ for any $k$. An example is given in Subsection \ref{sec:maximal}. 

A second way to make $g$-measures interact is, given $m\ge2$ possibly distinct $g$-functions $g^{(1)},\ldots,g^{(m)}$, to construct a $g$-function on the product space $\Sigma^m$ parametrized by a tuning parameter $\gamma\in[0,1]$ indicating the strength of the interaction, that is, having the property that, when $\gamma=0$, $\hat g=\prod_{i=1}^mg^{(i)}$, which corresponds to the non-interacting case. The resulting $\hat g$ needs not to be a coupling of the $g^{(k)}$'s in the stochastic process meaning. We give a simple example in Subsection \ref{sec:coupled_coupling}.

In order to simplify the presentation, we will restrict ourselves to the case where the $g^{(i)}$'s depend only on the two first coordinates and will construct   $\hat g$'s having the same property. This means that instead of specifying $g$-functions $g$, we will specify matrices $Q$. Moreover, we assume that the $Q^{(i)}$'s and $\hat Q$ have only strictly positive entry, ensuring that we are in force of all the conditions of Theorem \ref{theo:tempsync},  item (2). 

Observe finally that, also according to Theorem \ref{theo:tempsync}, we only have to define the coupling  $\hat g$ on $\Delta$, which means that we only have to define $Q^\Delta$.

\subsubsection{Example 1: the maximal coupling}\label{sec:maximal}
The \emph{maximal coupling} is a classical coupling in the theory of stochastic processes, but is less known in dynamical systems. We refer to \cite{bressaud/fernandez/galves/1999a} for a complete definition, here we only define the coupling on $\Delta$, which is sufficient for our purposes. We start with matrices $Q^{(i)},i=1,\ldots,m$, then the maximal coupling is defined on the diagonal as 
\begin{align*}
\hat Q_{\max}((a,\ldots,a),(b(1),\ldots,b(m))):=\inf\{Q^{(i)}(a,b(i)),i=1,\ldots,m\}.
\end{align*}

So, taking $m=2$ and coming back to the matrices used in Remark \ref{rem:markov_pressure},  we obtain $Q_{\max}^\Delta(0,0)=0.2,Q_{\max}^\Delta(0,1)=0.2, Q_{\max}^\Delta(1,0)=0.1, Q_{\max}^\Delta(1,1)=0.7$ and therefore, according to Theorem \ref{theo:tempsync} and Remark \ref{rem:markov_pressure0} we have asymptotically Polya-Aeppli distributed synchronzations with parameter $p_{\max}=\rho_{\max}$,  the largest eigenvalue of $Q_{\max}^\Delta$. This gives $p_{\max}=\left(\frac{9+\sqrt{33}}{20}\right)$, strictly larger than the value $p=16/25$ obtained in the uncoupled case considered in Remark \ref{rem:markov_pressure}. 

The terminology \emph{maximal} comes from the fact that $\hat Q_{\max}$ puts as much probability as possible on the diagonal, that is, as much probability of agreement as possible in one step, still keeping the marginals equal to $Q^{(i)},i=1,\ldots,m$. So it is natural that the synchronizations last longer than in the uncoupled case, and this is what $p_{\max}>p$ means. 

\subsubsection{Example 2: parametrized coupling} \label{sec:coupled_coupling}
Let $Q^{(1)}$ and $Q^{(2)}$ be two stochastic matrices on $\mathcal A=\{0,1\}$ and define transition probabilities $q^{(1)},q^{(2)}:\mathcal A^2\times \mathcal A\rightarrow[0,1]$ through
\begin{align*}
q^{(1)}((a(1),a(2)),1)&=(1-\gamma)Q^{(1)}(a(1),1)+\gamma a(2)\\
q^{(2)}((a(1),a(2)),1)&=(1-\gamma)Q^{(2)}(a(2),1)+\gamma a(1).
\end{align*}
Naturally, we put $q^{(i)}((a(1),a(2)),0)=1-q^{(i)}((a(1),a(2)),1)$. 

Now, define 
\[
\hat Q_{\gamma}((a(1),a(2)),(b(1),b(2)))=q^{(1)}((a(1),a(2)),b(1))q^{(2)}((a(1),a(2)),b(2)).
\]
Observe that $\hat Q_{\gamma}$ is indeed a stochastic matrix on $\mathcal A^2$, and that when $\gamma=0$, we get $\hat Q_{0}((a(1),a(2)),(b(1),b(2)))=Q^{(1)}(a(1),b(1))Q^{(2)}(a(2), b(2))$ as we wanted. 

As an example, consider the matrices used in Remark \ref{rem:markov_pressure}. According to Theorem \ref{theo:tempsync} and Remark \ref{rem:markov_pressure}, we  have asymptotically Polya-Aeppli with parameter $p_{\gamma}=e^{P(\log Q_{\gamma}^\Delta)}$ which equals the largest eigenvalue of $Q_{\gamma}^\Delta$. This yields
\[
p_{\gamma}=\frac{1}{200}\left(79+2\gamma+19\gamma^{2}+\sqrt{2401+7996\gamma+3006\gamma^{2}-7604\gamma^{3}+4201\gamma^{4}}\right). 
\]
So we notice that the interacting parameter $\gamma$ modifies in a non-trivial way the parameter of the asymptotic distribution. Actually, in the present case, synchronisation increases as the parameter $\gamma$ increases. (Observe that when $\gamma=0$, we retrieve $16/25$, as in the non-interacting case, which is natural.)

The original inspiration here is as a toy model for two interacting neurons, in which the value $1$ means that the neuron is spiking, and $0$ means it is resting. We assume that each neuron, when they don't interact ($\gamma=0$), has a spiking dynamic given by the $Q^{(i)}$, $i=1,2$. When they interact ($\gamma>0$), the probability that a neuron spikes will depend not only on whether or not it just spiked, but also on whether or not the other neuron just spiked, this is what $q^{(i)}$ models. More precisely, it models the effect of excitatory neurons, since the probability of spiking for one neuron increases when the other neuron has spiked: $q^{(1)}((a(1),1),1)>Q^{(1)}(a(1),1)$. Next, given the past, we assume that the probability of spiking for each neuron is independent, this is why $\hat Q_{\gamma}$ is the product $q^{(1)}q^{(2)}$. 

Naturally, this model is very simple and does not represent correctly the complexity of a system of interacting neurons, however, it retains some features of a recent model introduced by \cite{galves2013infinite}. Their model is not markovian, and include several physiological considerations, and it is a natural and interesting problem to analyse the synchonisation properties in their setting, using Theorem \ref{theo:tempsync}.


\section{Discussion on synchronisation}\label{sec:geovscyl}

 The present paper is mainly concerned with the symbolic setting of deterministic dynamical systems. In the present section we make a small digression to discuss the difference, with regard to asymptotic synchronisation, between three situations: (1) the \emph{geometric} approach of deterministic dynamical systems, (2) the \emph{symbolic} approach of deterministic dynamical systems, and (3) Markov chain, a particular case of random dynamical systems. 
 
 We start this section by making a rapid overview of what is known in the geometric setting. The first application of recurrence  type argument, like those used in this paper, to synchronisation was given in section 4 of the article \cite{keller2009rare}, where the authors explicitly computed a first order formula for the leading eigenvalue of the perturbed transfer operator, the perturbation being the small neighborhood around the diagonal. It was successively  shown by \cite{keller2012rare} that  such a perturbative formula was intimately related to the extremal index. This spectral approach to extreme value theory was developed in \cite{faranda2018extreme}, which  showed that the probability of the appearance of synchronization in chaotic coupled map lattices was related to the distribution of the maximum of a certain observable evaluated along almost all orbits. The statistics of the number of visits was proven in \cite{haydn2020limiting}, with a technique different from  the spectral approach: we recall it in the next subsection and then, by considering a very simple example in Section \ref{sec:geovssymbo} we will exhibit clearly how different can be this approach from the symbolic approach developed  in Section \ref{sec:temporal_sync}. We remind that   an alternate probabilistic approach
in a coupled maps setting is proposed in \cite{carney2021extremes}. We conclude with Section \ref{sec:markov}, considering the case of continuous state Markov chains, a third situation, in which yet another behaviour is displayed.

\subsection{Synchronisation of (un)coupled map lattices: geometric approach} \label{subsec:uncoupled}

We shall consider coupled map lattices over uniformly expanding interval maps. 
Let $T$ be a piecewise continuous map on the unit interval $I=[0,1]$ which is uniformly expanding,
i.e.\ satisfies $\inf|DT|>1$. We also assume that $T^{-1}$ has only finitely many branches.
Then we define the coupled map 
 $\hat{T}$ on $\Omega=I^m$,   for some integer $m\ge2$ by
\begin{equation}\label{TMn}
\T(\vec{x})_i=(1-\gamma)T(x_i)+\gamma \sum_{j=1}^m M_{i,j}T(x_j)\qquad \forall\, i=1, 2,\dots,m,
\end{equation}
for $\vec{x}\in \Omega$, where $M$ is an $m\times m$ stochastic matrix and  $\gamma\in[0,1]$ is a
coupling constant. The uncoupled case corresponds to $\gamma=0$ in which case $\T$ is the product
of $m$ copies of $T$. For $\nu>0$ small, we put
\begin{equation}\label{eq:strip_def}
S_{\nu}:=\{\vec{x}\in [0,1]^m: |x_i-x_j|\le \nu \:,\forall i,j\}
\end{equation}
for a tubular neighbourhood of the diagonal $\Gamma$ (see Figure \ref{fig1} for a picture with $m=2$). Then we define as before
$\hat\alpha_{k+1} =\lim_{K\to\infty}\lim_{\nu\to0}\hat\alpha_{k+1}(K,S_\nu)$ for the 
parameters of the limiting compound Poisson distribution which describes the sychronisation
effect in the neighbourhood of the diagonal $\Gamma$.

It has been previously shown by {\cite{haydn2020limiting}} that if $T$ is a piece-wise  uniformly expanding map of the unit interval 
with finitely many branches satisfying a mild geometric condition along the diagonal and if $\mu$ is an
 equilibrium state  for a sufficiently regular potential function on $\Omega$ then
 the compound Poisson parameters are given by
\begin{equation}\label{eq:strip}
 \hat{\alpha}_{k+1}
  =\frac1{(1-\g)^{k(m-1)} \int_{I}h((x)^m) \,dx}\int_{I} \frac{h((x)^m)}{|DT^k(x)|^{m-1}}\,dx
 \end{equation}
 where $h:\Omega\to\mathbb{R}^+$ is the density function of $\mu$ and 
 $(x)^m$ denotes the set of points on the diagonal. 
 
 Obtaining explicit results is still a complicated problem for coupled map lattices. So we now turn our attention to the case of \emph{uncoupled} map lattices, that is, to the case where $\gamma=0$. Such a situation was first investigated by \cite{coelho1994asymptotic}. They proved that, for an absolutely continuous measure of a piecewise 
expanding and smooth map of the circle, the asymptotic distribution of synchronisation is compound Poisson, 
and identified the limiting parameters $\hat\alpha_k,k\ge1$.

So let us see how \eqref{eq:strip} looks like in the uncoupled case (we consider here the case $m=2$ to simplify) in the setting of interval transformations.  Consider  a 
partition $\mathcal{A}=\{I_1,\ldots,I_M\}$ of $I:=[0,1)$ and a piecewise linear Markov transformation $T$ 
which is continuous, monotone and uniformly expanding on each of the sub intervals $I_i$,
that is  $\inf_{I_i}|T'_i|>1$ for any $i=1,\ldots,M$, where $T_i=T$ on $I_i$. Define the stochastic
$M\times M$ matrix $Q$ by
\[
Q_{i,j}=\begin{cases}0&\mbox{ if $I_j\cap T(I_i)=\varnothing$}\\
\frac{1}{|T'_i|}&\mbox{ if $I_j\subset T(I_i)$}.\end{cases}
\]
We know in this case that the invariant density $h$ which satisfies $\mathcal{L}h=h$, where 
$\mathcal{L}$ is the transfer operator, is piecewise constant. Thus put   $h_i=h(x)$ for  $x\in I_i$ 
and consider the row vector $\vec{h}=(h_1,\ldots, h_M)$. Then  $hQ=h$.

According to~\cite{faranda2018extreme} and \cite{haydn2020limiting}, we then have to compute 
\[
\hat\alpha_{k+1}=\frac{\int_I\frac{h^2(x)}{|DT^k(x)|}\,dx}{\int_I h^2(x)\,dx}.
\]
For any $(a_1,\ldots,a_k)\in\{1,\ldots,M\}^k$, we  use the notation $x\in I_{(a_1,\ldots,a_k)}$ for $T^{i-1}x\in I_{a_i},\,i=1,\ldots,k$ and let 
\[
\mathcal{A}^k:=\{(a_1,\ldots,a_k)\in\{1,\ldots,M\}: Q_{a_i,a_{i+1}}>0,i=1,\ldots,k-1\}.
\] 
If $(a_1,\ldots,a_k)\in\mathcal{A}^k$ then, using the chain rule,
\[
|DT^k(x)|=|\prod_{i=1}^{k}DT(T^{i-1}x)|=\prod_{i=1}^{k}|DT_{a_i}|\,\,,\,\,\,\,\forall x\in I_{(a_1,\ldots,a_k)}
\]
while on the other hand
\[
\lambda(I_{(a_1,\ldots,a_k)})=\frac{1}{\prod_{i=1}^{k}|DT_{a_i}|}.
\]
We can now compute 
\begin{align}
\int\frac{h^2(x)}{|DT^k(x)|}\,dx&=\sum_{(a_1,\ldots,a_k)\in\mathcal{A}^k}h_{a_1}^2\frac{\lambda(I_{(a_1,\ldots,a_k)})}{\prod_{i=1}^{k}|DT_{a_i}|}\label{eq:diagonal_formula}
\end{align}
and
\begin{align*}
\int_Ih^2(x)\,dx&=\sum_ih_i^2\lambda(I_i).
\end{align*}
 {For instance, consider the case of $T(x)=3x\,\text{mod}\,1$. Then we have  $h_i=1$ and $|DT_i|=3$ for $i=1,2,3$,  and thus $\int\frac{h^2(x)}{|DT^k(x)|}dx=3^{-k}$ while $\int_Ih^2(x)dx=1$. We therefore obtain $\hat\alpha_{k+1}=3^{-k}$, which means that the random variable $W$ has P\'olya-Aeppli distribution  with parameter $t(1-1/3)$ (see Subsection \ref{rem:compPois}). A natural conjecture (having also in view Theorem \ref{theo:tempsync}) would be that this 
 is the case for any uncoupled (sufficiently mixing) dynamical systems. However, as we show below, this is not necessarily the case.}

\subsection{Geometric \emph{vs.} symbolic: an explicit example}\label{sec:geovssymbo}

As is probably clear by now, we call geometric approach when we measure the synchronisation through   visits to $S_\nu,\nu>0$ (see \eqref{eq:strip_def}), thin strips around the diagonal, as pictured in Figure \ref{fig1}. On the other hand,  the symbolic approach, when we measure synchronisation through visits to cylinder sets covering the diagonal, is pictured in Figure \ref{fig2}. 
\begin{figure}[!h]
           \begin{floatrow}
             \ffigbox{\includegraphics[scale = 0.315]{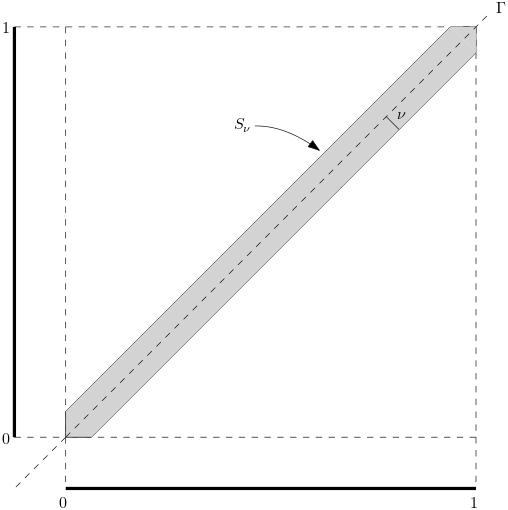}}{\caption{A strip $S_\nu$ around the diagonal $\Gamma$ in $[0,1)^2$.}\label{fig1}}
             \ffigbox{\includegraphics[scale = 0.31]{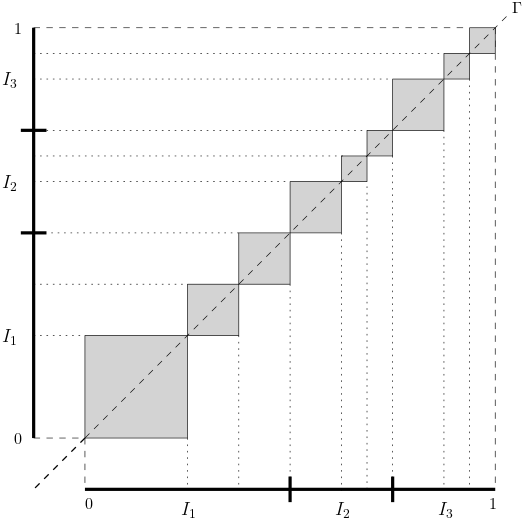}}{\caption{The set $S_2=\bigcup_{A\in\mathcal{A}^2}(A\times A)$ where the partition $\mathcal A=\{[0,\frac{1}{2}),[\frac12,\frac34),[\frac34,1)\}$.}\label{fig2}}
           \end{floatrow}
        \end{figure}
 
Here we show that both approaches yield  different classes of distributions in general (although both compound Poisson), by mean of a simple example. 
Consider the following map $T$ on the unit interval $I$:
\begin{equation}\label{eq:interval_tranformation}
T(x)=\begin{cases}3x & \mbox{ if $ x\in I_1:=[0,1/3)$}\\
5/3-2x & \mbox{ if $ x\in I_2:=[1/3,2/3)$}\\
-2+3x & \mbox{ if $ x \in I_3:=[2/3,1)$}.\\
\end{cases}
\end{equation}
It is a piecewise linear Markov transformation with
$$
Q=\left(\begin{array}{ccc}\frac13&\frac13&\frac13\\
0&\frac12&\frac12\\
\frac13&\frac13&\frac13
\end{array}\right).
$$

\subsubsection{Geometric approach} We know \citep{haydn2020limiting,faranda2018extreme} that   the asymptotic distribution of synchronisation is Poisson compound. We now calculate explicitly the parameters $\hat{\alpha}_{k+1},k\ge1$ using \eqref{eq:diagonal_formula} which, in matricial form, gives
\[
\int\frac{h^2(x)}{|DT^k(x)|}\,dx=\sum_{(a_1,\ldots,a_k)\in\mathcal{A}^k}h^2_{a_1}\frac{\lambda(I_{(a_1,\ldots,a_k)})}{\prod_{i=1}^{k}|DT_{a_i}|}=\sum_{(a_1,\ldots,a_k)}h_{a_1}^2\prod_{i=1}^{k}Q^2_{a_i,a_{i+1}}=\text{Trace}(\nu^{h}\hat Q^{k})
\]
where $\nu^h=(h^2_1\ldots h^2_n)$ and $\hat Q$ is the matrix with entries $\hat Q_{i,j}=Q^2_{i,j}$. The corresponding  piecewise constant density with respect to Lebesgue is the row vector $\vec{h}=(3/5\,\,\,6/5\,\,\,6/5)$, and we then obtain
$\nu^h=(9/25\,\,\,36/25\,\,\,36/25)$. With this example we get (using Mathematica online) that $\text{Trace}(\nu^{h}\hat Q^{k})$ equals
\[
\frac{2^{-3 k - 1} 3^{2 - 2 k} \left[(3 \sqrt{145} - 23) (17 - \sqrt{145})^k + (23 + 3 \sqrt{145}) (17 + \sqrt{145})^k\right]}{25 \sqrt{145}}
\]
and 
\[
\int_Ih^2(x)\,dx=\sum_ih_i^2\lambda(I_i)=27/25.
\]
This yields
\begin{align*}
\hat\alpha_{k+1}&=\frac{2^{-3 k - 1} 3^{2 - 2 k} \left[(3 \sqrt{145} - 23) (17 - \sqrt{145})^k + (23 + 3 \sqrt{145}) (17 + \sqrt{145})^k\right]}{27 \sqrt{145}}
\end{align*}
which does not correspond to a geometric distribution and we do not have P\'olya-Aeppli asymptotic distribution of synchronisation in a strip around the diagonal.

\subsubsection{Symbolic approach} We can use Theorem \ref{theo:tempsync} doing $g(x)=:Q_{i,j}$ for any $x$ such that $x_1=i,x_2=j$ (\emph{i.e.} $g(x)$ depends only on the first two coordinates $x_1, x_2$). According to Remark \ref{rem:markov_pressure}, we conclude that  synchronisation distribution  for cylinder neighbourhoods of the diagonal  converges to P\'olya-Aeppli with the parameter $t(1-\rho)$, where $\rho$, the largest positive eigenvalue of $\hat Q$,  equals $\frac1{72}(17+\sqrt{145})$ for this specific example.

\subsection{Synchronisation of Markov chains}\label{sec:markov}
Let us in this section consider the quintessential random dynamical systems which are Markov chains.
In fact a {\em random transformation} can in a simple way lead to a Markov chain in the following way
(see for instance \cite{bahsoun2014pseudo}).
Take $\{\omega_k\}_{k\in \mathbb{N}_0}$ a sequence of i.i.d.\
 random variables with values
in some $\Omega$ that carries the probability measure $\theta$.
 We associate to each $\omega\in
\Omega$ a map $T_{\omega}$
and the iteration of the unperturbed map $T^n(x)$,
 will be replaced by the composition of
random maps $T_{\omega_{n-1}}\circ\cdots\circ T_{\omega_0}.$
These random transformations generate a stationary
 Markov chain $\{Z_n\}_{n\ge 0}$ with transition probabilities,
for any $n\ge 1:$
$$
\mathbb{P}(Z_{n+1}\in A|Z_n=x)=
\int_{\Omega}{\bf 1}_A(T_{\omega}(x))\,d\theta(\omega),
$$
where $A$ is a measurable set in some $M$ and $T_\omega$ is a map from $M$ to $M$.

We therefore consider such a Markov chain  $\{X_n\}_{n\ge 0}$,  stationary, with  continuous
state space $I\subset \mathbb R$ and transition probabilities
$p(x,A)=\mathbb{P}(X_{n+1}\in A|X_n=x)$ for measurable sets $A$.
If the transition kernel has a density $p(x,y)$, that is if $p(x,dy)=p(x,y)\,dy$, then
$$
\mathbb{P}(X_{n+1}\in A|X_n=x)=\int_A p(x,y)\,dy.
$$
The invariant measure $\mathbb{P}$ is then given by the transition probabilities $p$ and
 an initial probability measure $\rho$. That is  $\mathbb{P}(X_0\in A)=\int_A\,d\rho(y)$
 and 
 \[
 d\mathbb{P}(x_0,x_1,x_2,\dots,x_n)=d\rho(x_0)p(x_0,dx_1)p(x_1,dx_2)\cdots p(x_{n-1},dx_n).
 \]
The probability measure $\rho$ on $I$ is invariant under the map $T$ which is given by
$T\mu(A)= \int_I p(x,A)\,d\mu(x)$, for all $A$ measurable, and is the annealed invariant measure on $I$.
 The Markov chain satisfies the {\em Doeblin condition} if there exists a 
probabiltiy measure $\nu$ 
and an $\eta\in(0,1)$ so that $p(x,A)\ge (1-\eta)\nu(A)$ for all measurable $A$. If $p$ satisfies
 this condition then in the total variation norm $\|p^n(x,\cdot)-\rho(\cdot)\|_{TV}\le2\eta^n$ uniformly in $x$.
 
We can now  associate to $\{X_n\}_{n\ge 0}$ another independent copy $\{Y_n\}_{n\ge 0}$ and ask the distribution of the first synchronisation 
time of the two chains. The Markov chain $(X_n,Y_n)\in I\times I$, $n\in\mathbb{N}_0$, has 
transition probabilities $p_2((x,y),(A,B))=p(x,A)p(y,B)$, $A,B$ measurable.
Let us denote by $\hat{T}$ the product map on $I\times I$ and by $ \mathbb{P}_2=\mathbb{P}\otimes\mathbb{P}$ its
invariant probability measure.
By using the procedure of section \ref{subsec:uncoupled}, we are led
  to consider the direct product of these two chains and  look at the couples of points   which stay close to  each other up to  time $n-1$.
Let us therefore set $\Delta_{\delta}:=\{(x,y)\in I_2=I\times I, |x-y|\le \delta\}$ for a neighbourhood of the 
diagonal in $I\times I$. 

Specifically, we will show that for Markov chains whose densities are bounded above and 
away from $0$, the limiting   distribution (as $\delta\rightarrow0$) of return times to $\Delta_\delta$  is Poissonian, which means that orbits don't 
cluster over time and that there is no sychronisation effect.

 In order to get a limiting compound Poisson distribution as $\delta\to0$ 
we want to use Theorem~3 
 of \cite{haydn2020limiting}. For that purpose 
 let us put $\mathfrak{X}_i=\ind_{\Delta_\delta}\circ \hat{T}^i$, $i=0,1,2,\dots$, and 
 $\mathfrak{W}_a^b=\sum_{i=a}^b\mathfrak{X}_i$. For simplicity we put 
$\mathfrak{W}=\mathfrak{W}_0^N$,
 where $N=t/\mathbb{P}_2(\Delta_\delta)$ (take integer part) where $t>0$ is a parameter. 
 We cut the time interval $N$ into
 blocks of length $2K+1$ for some large $K$ ($K<\!\!<N$) and put
  $\mathfrak{Z}=\sum_{i=0}^{2K}\mathfrak{X}_i$. If we put $N'=N/(2K+1)$ (assuming it being an
  integer) then $\mathfrak{W}=\sum_{n=0}^{N'-1}\mathfrak{Z}\circ \hat{T}^{n(2K+1)}$.
  We now choose a gap $\gamma<\!\!<N'$ ($\gamma\ge2$) and want to estimate the quantities
\begin{align*}
\mathcal{R}_1 &= \sup_{\substack {0< \gamma<M\le N'\\0 <q<N'-\gamma-1/2}}
 \left|\sum_{u=1}^{q-1}\!\left(\mathbb{P}_2\!\left(\mathfrak{Z}=u \land \mathfrak{W}_{\gamma(2K+1)}^{M(2K+1)}=q-u\right)\right.\right.\\
&\quad\quad\quad\quad\quad\quad\quad\quad\quad\quad\quad\quad\left.\left.-\mathbb{P}_2(\mathfrak{Z}=u)\mathbb{P}_2\!\left(\mathfrak{W}_{\gamma(2K+1)}^{M(2K+1)}=q-u\right)\right)\right|\\
\mathcal{R}_2 &= \sum_{j=1}^\gamma \mathbb{P}_2(\mathfrak{Z}\ge1 \land \mathfrak{Z}\circ \hat{T}^{(2K+1)j}\ge1).
\end{align*}
  If we denote by $\tilde\nu$ the compound binomial distribution measure where the binomial part
 has values $\mathfrak{p}=\mathbb{P}_2(\mathfrak{Z}\ge1)$ and $N'=N/(2K+1)$ and the compound part has
 probabilities  $\lambda_\ell(K,\delta)=\mathbb{P}_2(\mathfrak{Z}=\ell)/\mathfrak{p}$,
then, by Theorem~3 of \cite{haydn2020limiting},  there exists a constant $c_1$, independent of $K$ and $\gamma$,
such that
$$
|\mathbb{P}_2(\mathfrak{W}=k) - \tilde\nu(\{k\})|
 \leq c_12(N'(\mathcal{R}_1 + \mathcal{R}_2) + \gamma\mathbb{P}_2(\mathfrak{X}_0=1)).
$$
(Please note that Theorem~3 in the paper from 2020 which we use here contains a typo
in the lower summation limit of $j$ in the expression for $\mathcal{R}_2$: As we put it here the lower
summation limit must be $j=1$ and not $j=2$ as printed there.)
The proof of the following lemma is given at the end of the present section.

\begin{lemma}\label{lem:markov}
For any $K'<K$ there exists a constant $c_2$ so that
$$
N'(\mathcal{R}_1+\mathcal{R}_2)\le c_2K\gamma\mathbb{P}_2(\Delta_\delta)+\eta^{K'}+\frac{K'}K.
$$
\end{lemma}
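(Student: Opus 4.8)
To prove Lemma~\ref{lem:markov}, the plan is to bound the two quantities $\mathcal{R}_1$ and $\mathcal{R}_2$ separately, exploiting two features of the chain: the transition density is bounded above (so, after iteration, the probability that two \emph{fixed} times both land in $\Delta_\delta$ is quadratically small in $\mathbb{P}_2(\Delta_\delta)$), and the Doeblin condition gives exponential decay in total variation. Throughout, the bookkeeping rests on the identities $N\mathbb{P}_2(\Delta_\delta)=t$ and $N'(2K+1)=N$.

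First I would record the basic quadratic estimate. For $0\le i<j$, stationarity and the Markov property give
\[
\mathbb{P}_2(\mathfrak{X}_i=1\wedge\mathfrak{X}_j=1)=\mathbb{P}_2\big(\Delta_\delta\cap\hat{T}^{-(j-i)}\Delta_\delta\big)=\int_{\Delta_\delta}p_2^{(j-i)}\big((x,y),\Delta_\delta\big)\,d\mathbb{P}_2(x,y).
\]
Since $p(x,y)\le B$ a trivial induction yields $p^{(n)}(x,y)\le B$ for all $n\ge1$, whence $p_2^{(n)}((x,y),\Delta_\delta)\le B^2\,\mathrm{Leb}(\Delta_\delta)$; and since the invariant density is $\ge b>0$ one has $\mathrm{Leb}(\Delta_\delta)\le b^{-2}\mathbb{P}_2(\Delta_\delta)$, so $\mathbb{P}_2(\mathfrak{X}_i=1\wedge\mathfrak{X}_j=1)\le (B/b)^2\mathbb{P}_2(\Delta_\delta)^2$ whenever $i\ne j$. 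For $\mathcal{R}_2$: for each $j\in\{1,\dots,\gamma\}$ the windows $[0,2K]$ and $[(2K+1)j,(2K+1)j+2K]$ are disjoint (already for $j=1$, since $2K+1>2K$), so every cross pair of times is at distance $\ge1$; a union bound over the $\le(2K+1)^2$ cross pairs and the quadratic estimate give $\mathbb{P}_2(\mathfrak{Z}\ge1\wedge\mathfrak{Z}\circ\hat{T}^{(2K+1)j}\ge1)\le(2K+1)^2(B/b)^2\mathbb{P}_2(\Delta_\delta)^2$. Summing over $j$ and multiplying by $N'=N/(2K+1)$,
\[
N'\mathcal{R}_2\le(2K+1)\gamma(B/b)^2\,N\mathbb{P}_2(\Delta_\delta)^2=(2K+1)\gamma(B/b)^2\,t\,\mathbb{P}_2(\Delta_\delta),
\]
which has the announced form $c_2K\gamma\mathbb{P}_2(\Delta_\delta)$.

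For $\mathcal{R}_1$ the key observation is that the inner sum runs only over $u\ge1$. Conditioning on $(X_{2K},Y_{2K})$ makes $\{\mathfrak{Z}=u\}$ (a function of times $\le2K$) and $\{\mathfrak{W}_{\gamma(2K+1)}^{M(2K+1)}=q-u\}$ (a function of times $\ge\gamma(2K+1)$) conditionally independent. The gap $g=\gamma(2K+1)-2K=2K(\gamma-1)+\gamma$ satisfies $g\ge2K+2>K'$ because $\gamma\ge2$ and $K'<K$, so, coupling the product chain issued from $(X_{2K},Y_{2K})$ with a stationary copy, the Doeblin bound $\|p^n(x,\cdot)-\rho\|_{TV}\le2\eta^n$ makes the coupling succeed before time $\gamma(2K+1)$ with probability $\ge1-4\eta^{K'}$; on that event $\mathfrak{W}_{\gamma(2K+1)}^{M(2K+1)}$ is a functional of the stationary copy alone, hence independent of $\mathfrak{Z}$. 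This gives $|\mathbb{P}_2(\mathfrak{Z}=u\wedge\mathfrak{W}=q-u)-\mathbb{P}_2(\mathfrak{Z}=u)\mathbb{P}_2(\mathfrak{W}=q-u)|\le4\eta^{K'}\mathbb{P}_2(\mathfrak{Z}=u)$, and summing over $u\ge1$ with $\sum_{u\ge1}\mathbb{P}_2(\mathfrak{Z}=u)=\mathbb{P}_2(\mathfrak{Z}\ge1)\le(2K+1)\mathbb{P}_2(\Delta_\delta)$ yields $\mathcal{R}_1\le4(2K+1)\eta^{K'}\mathbb{P}_2(\Delta_\delta)$, so that $N'\mathcal{R}_1\le4t\,\eta^{K'}$. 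A residual length-$K'$ window that must be discarded at the decoupling junction contributes, summed over the $N'$ blocks, a further term of order $N'K'\mathbb{P}_2(\Delta_\delta)=\tfrac{K'}{2K+1}t$; relabelling constants — and, if one wants the bare coefficients $\eta^{K'}$ and $K'/K$, shrinking $K'$ by a fixed amount to absorb the $t$-dependence — gives the stated inequality.

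The delicate part of the argument is the $\mathcal{R}_1$ estimate. One must check that the gap truly exceeds $K'$ (this is where $\gamma\ge2$ enters), one must be careful that $\mathfrak{Z}$ depends on the very state $(X_{2K},Y_{2K})$ on which we condition — so that only the \emph{future} block decorrelates, which is exactly what is needed — and it is the restriction $u\ge1$ that lets the union-bound factor $2K+1$ cancel against $N'=N/(2K+1)$ rather than blow up. By contrast $\mathcal{R}_2$ and the quadratic estimate are routine once boundedness of the transition density is invoked.
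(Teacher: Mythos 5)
Your proof is correct, but it reaches the bound by a route that differs from the paper's in one substantive place, namely the treatment of $\mathcal{R}_2$. The paper's proof uses \emph{only} the Doeblin condition: for $j\ge2$ it bounds $\mathbb{P}_2(\mathfrak{Z}\ge1\wedge\mathfrak{Z}\circ\hat{T}^{j(2K+1)}\ge1)$ by $\mathbb{P}_2(\mathfrak{Z}\ge1)\bigl(\mathbb{P}_2(\mathfrak{Z}\ge1)+2\eta^{(j-1)(2K+1)}\bigr)$, and for the adjacent case $j=1$ it splits $\mathfrak{Z}=\mathfrak{Z}'+\mathfrak{Z}''$, discarding the last $K'$ indices of the first block so as to create a gap of length $K'$ over which Doeblin applies; the sacrificed piece $\mathbb{P}_2(\mathfrak{Z}'\ge1)\le K'\mathbb{P}_2(\Delta_\delta)$ is exactly what produces the $K'/K$ term after multiplying by $N'\le t/(K\mathbb{P}_2(\Delta_\delta))$. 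You instead invoke the two-sided density bounds ($p\le B$ and invariant density $\ge b$) to get the quadratic estimate $\mathbb{P}_2(\mathfrak{X}_i=1\wedge\mathfrak{X}_j=1)\le(B/b)^2\mathbb{P}_2(\Delta_\delta)^2$ for all distinct times, which handles every $j\ge1$ (including the adjacent case) by a single union bound and makes the $K'/K$ term unnecessary. This is a clean and valid argument in the context of the section, and it even yields a slightly stronger conclusion; the trade-off is that it uses the upper bound on the transition density, which the paper only introduces \emph{after} the lemma (to prove $\hat\alpha_2=0$), so the paper's proof of the lemma holds under the weaker hypothesis of Doeblin alone. Your $\mathcal{R}_1$ estimate is essentially the paper's Doeblin argument rephrased as a coupling, and is fine (your exponent $\eta^{K'}$ is weaker than the available $\eta^{(\gamma-1)(2K+1)}$ but suffices); the ``residual length-$K'$ window'' you append there is superfluous in your setup — it is an artifact of trying to reproduce the paper's $K'/K$ term, which your version of the argument simply does not generate — but since it only enlarges the upper bound it does no harm. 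Both proofs share the same mild looseness about the constants multiplying $\eta^{K'}$ and $K'/K$ (a factor of $t$ is absorbed silently), so no points lost there.
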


If we put $\gamma=\mathbb{P}_2(\Delta_\delta)^{-\beta}$ for some $\beta\in(0,1)$ then
$$
\mathbb{P}_2(\mathfrak{W}=k)\longrightarrow\nu^K(\{k\})+\mathcal{O}(\eta^{K'}+K'/K)
$$
as $\delta\to0$, where $\nu^K$ is the compound Poisson distribution with parameters
$t\lambda_\ell(K)$ with
$$
\lambda_\ell(K)=\lim_{\delta\to0}\frac{\mathbb{P}_2(\mathfrak{Z}\ge1)}{\mathbb{P}_2(\Delta_\delta)}
$$
assuming the limits exist. Now put e.g.\ $K'=\sqrt{K}$ and let $K$ go to infinity. 
Then $\nu^K$ converges to a compound Poisson distribution $\nu$ with parameters
$t\lambda_\ell$, where $\lambda_\ell=\lim_{K\to\infty}\lambda_\ell(K)$ assuming the 
limits exist. Thus 
$$
\mathbb{P}_2(\mathfrak{W}=k)\longrightarrow\nu(\{k\}).
$$

What we showed above is that, if the transition probabilities are 
given by a density $p(x,y)$ satisfying the Doeblin condition (that is, if we assume that $p$ is also bounded away from $0$), then  $\mathfrak{W}$ converges in distribution to a compound Poisson
distribution $\nu$. In order to show that $\nu$ is in fact a straight Poisson distribution 
we want to show that $\hat\alpha_2=0$, which implies by monotonicity that $\hat\alpha_\ell=0$ for all $\ell\ge2$. To that end, we shall further assume that the transition probabilities $p(x,y)$ are 
 bounded above by some constant $\mathcal{K}$.
If $p(x,y)\le\mathcal{K}$ for all $x,y$, we also get $p^n(x,y)\le\mathcal{K}$ for all $x,y$
and thus
\begin{eqnarray*}
\mathbb{P}_2(\Delta_\delta\cap \hat{T}^{-n}\Delta_\delta)
&=&\int \ind_{\Delta_\delta}(x_0,y_0)\ind_{\Delta_\delta}(x_n,y_n)
\,d\rho(x_0)\,d\rho(y_0)p^n(x_0,x_n)p^n(y_0,y_n)\,dx_n\,dy_n\\
&\le&\mathcal{K}\mathbb{P}_2(\Delta_\delta)\mathfrak{m}(\Delta_\delta),
\end{eqnarray*}
where  $\mathfrak{m}$ is the Lebesgue measure on $I\times I$, .
 Thus 
$$
\hat\alpha_2(K,\delta)
=\frac{\mathbb{P}_2\!\left(\Delta_\delta\cap\bigcup_{n=1}^{2K}\hat{T}^{-n}\Delta_\delta\right)}{\mathbb{P}_2(\Delta_\delta)}
\le 2K\mathcal{K}\mathfrak{m}(\Delta_\delta)\longrightarrow0
$$
as $\delta\to 0$ (since $\mathfrak{m}(\Delta_\delta)\le2\delta$) for all $K$.
Thus $\hat\alpha_\ell=0$  for all $\ell\ge2$ which implies that $\nu$ is Poisson with parameter $t$.

Let us remark that the double limit first $\delta\to0$ and then $K\to\infty$ can be synchronised by
going along a sequence $K_\delta\to\infty$ as $\delta$ approaches $0$ in such a way that
$K_\delta\mathbb{P}_2(\Delta_\delta)\to0$ as $\delta\to0$. This was shown  in \cite[Proposition~6.2]{yang2021rare}.

%

We conclude the section with the proof of the lemma. 
\begin{proof}[Proof of Lemma \ref{lem:markov}] In order to estimate $\mathcal{R}_1$ and the terms of 
$\mathcal{R}_2$ for $j\ge2$ 
we use the Doeblin condition and the consequential exponential convergence to the initial 
distribution to obtain:
\begin{eqnarray*}
\mathbb{P}_2(\mathfrak{Z}\ge1,\mathfrak{Z}\circ \hat{T}^{j(2K+1)}\ge1)
&=&\int \ind_{\mathfrak{Z}\ge1}(z_0,\dots,z_{2K})\ind_{\mathfrak{Z}\ge1}(z'_0,\dots,z'_{2K})\\
&&\hspace{1cm}d\rho_2(z_0)p_2(z_0,dz_1)\cdots p_2(z_{2K-1},dz_{2K})\\
&&\hspace{1cm}p_2^{(j-1)(2K+1)}(z_{2K},dz'_0)p_2(z'_0,dz'_1)\cdots p_2(z'_{2K-1},dz'_{2K})\\
&\le&\mathbb{P}_2(\mathfrak{Z}\ge1)\!\left(\mathbb{P}_2(\mathfrak{Z}\ge1)+2\eta^{(j-1)(2K+1)}\right),
\end{eqnarray*}
where $z_i=(x_i,y_i)$ and  $\rho_2=\rho\times \rho$.
Consequently
$$
\mathcal{R}_1
\le \sum_{j=\gamma}^\infty\mathbb{P}_2(\mathfrak{Z}\ge1)\eta^{(j-1)(2K+1)}
\le \mathbb{P}_2(\mathfrak{Z}\ge1)\eta^{(\gamma-1)(2K+1)}.
$$
For the estimate of $\mathcal{R}_2$ we consider the case $j=1$ separately, choose $K'<K$ 
and put $\mathfrak{Z}'=\sum_{i=2K-K'}^{2K}\mathfrak{X}_i$, 
$\mathfrak{Z}''=\mathfrak{Z}-\mathfrak{Z}'=\sum_{i=0}^{2K-K'-1}\mathfrak{X}_i$.
Then, similarly as above, we obtain
$$
\mathbb{P}_2(\mathfrak{Z}''\ge1,\mathfrak{Z}\circ \hat{T}^{2K+1}\ge1)
\le \mathbb{P}_2(\mathfrak{Z}''\ge1)\!\left(\mathbb{P}_2(\mathfrak{Z}\ge1)+2\eta^{K'}\right).
$$
Since 
$$
\mathbb{P}_2(\mathfrak{Z}\ge1,\mathfrak{Z}\circ \hat{T}^{2K+1}\ge1)
\le \mathbb{P}_2(\mathfrak{Z}''\ge1,\mathfrak{Z}\circ \hat{T}^{2K+1}\ge1)+\mathbb{P}_2(\mathfrak{Z}'\ge1)
$$
we obtain 
\begin{eqnarray*}
\mathcal{R}_2
&\le& \mathbb{P}_2(\mathfrak{Z}''\ge1)\!\left(\mathbb{P}_2(\mathfrak{Z}\ge1)+2\eta^{K'}\right)
+\mathbb{P}_2(\mathfrak{Z}'\ge1)
+2\sum_{j=2}^\gamma\mathbb{P}_2(\mathfrak{Z}\ge1)\eta^{(j-1)(2K+1)}.
\end{eqnarray*}
For the final estimate we use that $N'=\frac{N}{2K+1}\le\frac{N}K=\frac{t}{K\mathbb{P}_2(\Delta_\delta)}$
and $\mathbb{P}_2(\mathfrak{Z}\ge1)\le(2K+1)\mathbb{P}_2(\Delta_\delta)$, 

$\mathbb{P}_2(\mathfrak{Z}'\ge1)\le K'\mathbb{P}_2(\Delta_\delta)$.
Then as $K'<K$ 
\begin{eqnarray*}
N'(\mathcal{R}_1+\mathcal{R}_2)
&\lesssim& \frac{t}{K\mathbb{P}_2(\Delta_\delta)}\!\left(3K^2\mathbb{P}_2(\Delta_\delta)^2
+K\mathbb{P}_2(\Delta_\delta)\eta^{(\gamma-1)K}
+2K\mathbb{P}_2(\Delta_\delta)\eta^{K'}+K'\mathbb{P}_2(\Delta_\delta)\right)\\
&\lesssim&K\mathbb{P}_2(\Delta_\delta)
+\eta^{(\gamma-1)K}
+\eta^{K'}+\frac{K'}K
\end{eqnarray*}
which implies the statement as $K'<(\gamma-1)K$.
\end{proof}

\section{Proofs of general theorems}  \label{sec:proofs}

Before we come to the proofs of Theorems \ref{theorem.compound.poisson}, \ref{corollary.compound.poisson_psi} and \ref{main.theorem} we start this section with an important subsection which describes rapidly the  classical Stein-Chen method, used to prove Theorem \ref{theorem.compound.poisson}. 


\subsection{Stein-Chen method}
  We will use the Stein-Chen method as  described in~\cite{roos1994stein} to estimate
   how close a given 
 probability measure $\nu$ is to a compound Poisson distribution $\tilde\nu$
 for parameters $t\lambda_\ell$, $\ell=1,2,\dots$, which satisfy 
 $\sum_\ell\lambda_\ell<\infty$.
 On the  space $\mathcal{F}=\{f: \mathbb{N}_0\to\mathbb{R}\}$
  of functions on the non-negative integers, the
 {\em Stein operator} $\mathscr{S}:\mathcal{F}\to\mathcal{F}$ is defined by
$$
\mathscr{S}g(k)=kg(k)-\sum_{\ell=1}^\infty t\ell\lambda_\ell g(k+\ell).
$$
For a given set $E\subset\mathbb{N}_0$ one wants to find $f$ so that 
$$
\mathscr{S}f=\ind_E-\tilde\nu(E)
$$
where $\tilde\nu$ is the compound Poisson distribution with parameters $t\lambda_\ell$.
This last identity is the Stein equation. Proposition~1 of \cite{barbour1992compound}  states that for given 
$E\subset\mathbb{N}_0$ the solution $f$ satisfies $f(k)\lesssim\frac1k$.

Indeed if $\int \mathscr{S}f\,d\tilde\nu=0$
 for all bounded functions $f$ on $\mathbb{N}$, then
\begin{eqnarray*}
0&=&\sum_k\mathscr{S}f(k)\tilde\nu(\{k\})\\
&=&\sum_k\left(kf(k)-\sum_{\ell=1}^\infty t\ell\tilde\lambda_\ell f(k+\ell)\right)\tilde\nu(\{k\})\\
&=&\sum_k f(k)\!\left(k\tilde\nu(\{k\})-\sum_{\ell=1}^k t\ell\tilde\lambda_\ell\tilde\nu(\{k-\ell\})\right)
\end{eqnarray*}
implies
$$
k\tilde\nu(\{k\})=\sum_{\ell=1}^k t\ell\tilde\lambda_\ell\tilde\nu(\{k-\ell\})
$$
 for every $k$.  From this we conclude that $\tilde\nu$ has the generating function 
$$
\varphi_{\tilde\nu}(z)=\exp\sum_\ell t\tilde\lambda_\ell(e^{z\ell}-1)
$$
which equals $\exp\int_0^\infty(e^{zx}-1)\,d\rho(x)=\exp(\varphi_\rho(z)-L)$ where 
$\rho =\sum_\ell t\tilde\lambda_\ell\delta_\ell$, $L=\sum_\ell t\tilde\lambda_\ell$ and
 $\varphi_\rho(z)=\sum_\ell t\tilde\lambda_\ell e^{z\ell}$
is the generating function for the measure $\rho$. 
This implies that $\tilde\nu$ is compound Poisson with parameters $t\tilde\lambda_\ell$.

\subsection{Proof of Theorem~\ref{theorem.compound.poisson}}

We are now ready to prove our first main result. 
\begin{proof}[Proof of Theorem~\ref{theorem.compound.poisson}]
Let $\tilde\nu$ be the compound Poisson distribution for $t{\tilde\lambda_\ell}$, $\ell\in\mathbb{N}$ as defined in the statement of the theorem. For $E\subset{N}_0$ let again $f$ be the solution of the 
Stein equation $\mathscr{S}f=\ind_E-\tilde\nu(E)$. Then for a probability measure
$\nu$ on $\mathbb{N}_0$ we have
\begin{eqnarray*}
\nu(E)-\tilde\nu(E)&=&\int\ind_E\,d\nu-\int\ind_E\,d\tilde\nu\\
&=&\int(\ind_E-\tilde\nu(E))\,d\nu\\
&=&\int\mathscr{S}f\,d\nu\\
&=&\int\left(Wf(W)-\sum_{\ell=1}^\infty t\ell\tilde\lambda_\ell f(W+\ell)\right)\,d\nu(k).
\end{eqnarray*}
For some fixed set $U\subset\Omega$ and $t>0$, recall the definition \eqref{eq:W} of the random variable $W$ and put $\nu(\cdot)=\mu(W\in \cdot)$. We have
\begin{equation}\label{eq:nu}
\mu(W\in E)-\tilde\nu(E)=\int\mathscr{S}f\,d\nu
=\mathbb{E}[Wf(W)]-\sum_{\ell=1}^\infty t\ell\tilde\lambda_\ell\mathbb{E}[f(W+\ell)].
\end{equation}

Now let $K$ and $\Delta$ be (later taken to be large) numbers so that $K<\!\!<\Delta<\!\!<t/\mu(U)$ and 
let us establish the following notation (with the obvious restrictions 
$i-j\ge0$ and $i+j\le t/\mu(U)$):\\
\begin{enumerate}[(i)] 
\item Close range interactions: $Z_i=\sum_{j=-K}^KI_{i+j}$. (The purpose of the double sided sum 
centred at $i$ is to cover both cases, when $T$ is non-invertible as well as the case when $T$ is 
invertible.) Observe that was denoted $Z_i^{(K)}$ in the beginning of the paper but we will omit the upper script to avoid overloaded notation.\\
\item The gap terms
$$
 V^{-}_i=\sum_{j=K+1}^{K+\Delta} I_{i-j},\quad V^{+}_i=\sum_{j=K+1}^{K+\Delta} I_{i+j}
$$
and $V_i=V_i^-+V_i^+$ for the entire gap.\\
\item The principal terms:
$$
Y_i^-=\sum_{j>K+\Delta}I_{i-j},\quad Y_i^+=\sum_{j>K+\Delta}I_{i+j},
$$
and $Y_i=Y_i^-+Y_i^+$ for the entire principle term. We reforce that these summands are restricted by $i-j\ge0$ and $i+j\le t/\mu(U)$, and are therefore not infinite. 
\end{enumerate}

In this way we decomposed $W$ as $W=Z_i+V_i+Y_i$ for every $i=1,\dots,N$.

Now observe that, by translation invariance
for any $i\in(K,N-K)$
\[
t\ell\tilde\lambda_\ell=t\mathbb{E}(\ind_{Z_i=\ell}|I_i=1)=\frac{t}{\mu(U)}\mathbb{E}(I_i\ind_{Z_i=\ell})=\sum_{i=1}^{t/\mu(U)}\mathbb{E}(I_i\ind_{Z_i=\ell}).
\]
Thus  
\[
t\ell{\tilde\lambda_\ell}\mathbb{E}f(W+\ell)=\sum_{i=0}^{t/\mu(U)}\mathbb{E}(I_i\ind_{Z_i=\ell})\mathbb{E}f(W+\ell).
\]
On the other hand we can naturally write  
\[
Wf(W)=\sum_iI_if(W).
\]
With this in hand, coming back to \eqref{eq:nu} we have
\begin{eqnarray*}
\mu(W\in E)-\tilde\nu(E)
&=&\sum_{i=0}^{t/\mu(U)}\left(\mathbb{E}[I_if(W)]-\sum_\ell\mathbb{E}[I_i\ind_{Z_i=\ell}]\mathbb{E}[f(W+\ell)]\right)\\
&=&\sum_{i=0}^{t/\mu(U)}\left(\sum_\ell\mathbb{E}[I_i\ind_{Z_i=\ell}f(W)]-\sum_\ell\mathbb{E}[I_i\ind_{Z_i=\ell}]\mathbb{E}[f(W+\ell)]\right)\\
&=&\sum_{i=0}^{t/\mu(U)}\left(\sum_\ell\mathbb{E}[I_i\ind_{Z_i=\ell}f(Y_i+V_i+\ell)]-\sum_\ell\mathbb{E}[I_i\ind_{Z_i=\ell}]\mathbb{E}[f(W+\ell)]\right).
\end{eqnarray*}
We now split the error term on the right hand side into three parts as follows:
\begin{eqnarray*}
&&\sum_{i=0}^{t/\mu(U)}\sum_\ell\left(\mathbb{E}[I_i\ind_{Z_i=\ell}f(Y_i+V_i+\ell)]-\mathbb{E}[I_i\ind_{Z_i=\ell}f(Y_i+\ell)]\right)\\
&&\hspace{1cm}+\sum_{i=0}^{t/\mu(U)}\sum_\ell\left(\mathbb{E}[I_i\ind_{Z_i=\ell}f(Y_i+\ell)]-\mathbb{E}[I_i\ind_{Z_i=\ell}]\mathbb{E}[f(Y_i+\ell)]\right)\\
&&\hspace{1cm}+\sum_{i=0}^{t/\mu(U)}\sum_\ell\left(\mathbb{E}[I_i\ind_{Z_i=\ell}]\mathbb{E}[f(Y_i+\ell)]-\mathbb{E}[I_i\ind_{Z_i=\ell}]\mathbb{E}[f(W+\ell)]\right)\\
&&=A+B+C.
\end{eqnarray*}
We now proceed to show that each of the three terms can be upper bounded in order to give the bound stated by Theorem \ref{theorem.compound.poisson}.
\begin{enumerate}[(i)]
\item For the first term we write $A=\sum_{i,\ell}A_{i,\ell}$ where
\begin{eqnarray*}
|A_{i,\ell}|&=&|\mathbb{E}[I_i\ind_{Z_i=\ell}(f(Y_i+V_i+\ell))-f(Y_i+\ell)]|\\
&\le&\|f'\|\mathbb{E}[I_i\ind_{Z_i=\ell}V_i].
\end{eqnarray*}
Note that
\begin{eqnarray*}
\sum_{\ell=1}^{2K+1}\mathbb{E}(I_i\ind_{Z_i=\ell}V_i^+)
&=&\mathbb{E}(I_i\ind_{Z_i\ge1}V_i^+)\\
&\le&\mathbb{E}(I_iV_i^+)\\
&\le&\sum_{j=K+1}^{2n}\mu(U^{j/2}\cap T^{-j}U)
+\sum_{j=2n+1}^{K+\Delta}\mu(U\cap T^{-j}U)
\end{eqnarray*}
where we recall that $U^\ell=A_\ell(U)=\bigcup_{A\in\mathcal{A}^\ell, A\cap U\not=\varnothing}A$ is 
the outer $\ell$-approximation of $U$ ($\ell\le n$).
Therefore by the right $\phi$-mixing property (see \eqref{def:phi_left})
\begin{align}
\label{eq:critical_part_mixing}\sum_\ell\mathbb{E}(I_i\ind_{Z_i=\ell}V_i^+)
\le&\mu(U)\left(\sum_{j=K+1}^{2n}[\mu(U^{j/2})+\phi(j/2)]
+\sum_{j=2n+1}^{K+\Delta}[\mu(U)+\phi(j-n)]\right)\\
\nonumber\le&\mu(U)\left(\sum_{j=K/2}^{n}\mu(U^{j})+\Delta\mu(U)+\sum_{j=K/2}^\infty\phi(j)\right).
\end{align}
One can also show that 
$$
\sum_\ell\mathbb{E}(I_i\ind_{Z_i=\ell}V_i^-)
=\sum_\ell\mathbb{E}(I_i\ind_{Z_i=\ell}V_i^+).
$$
Putting the above together we obtain ($\phi^1$ is the tail sum of $\phi$)
\begin{align*}
|A|&\le \sum_{i=0}^{t/\mu(U)}c_1 \|f'\|\mu(U)\left[\Delta\mu(U)+\phi^1(K/2)+\sum_{j=K/2}^n\mu(U^j)\right]\\
&=c_2 \|f'\|t\left[\Delta\mu(U)+\phi^1(K/2)+\sum_{j=K/2}^n\mu(U^j)\right].
\end{align*}
On the other hand, we have $\|f'\|=\mathcal{O}(1)$ since by Theorem~4 in~\cite{barbour1992compound}  
\[
|f(Y_i+V_i+\ell)-f(Y_i+\ell)|\le c_3\frac1{Y_i+\ell}
=\mathcal{O}(1).
\]
So
\begin{equation}\label{eq:A}
|A|\le c_2c_3 t\left[\Delta\mu(U)+\phi^1(K/2)+\sum_{j=K/2}^n\mu(U^j)\right].
\end{equation}

\item We now estimate  $B$. We get
\begin{eqnarray*}
B&=&\sum_{i}\sum_\ell\sum_{a=0}^\infty f(a+\ell)
\left(\mathbb{E}(I_i\ind_{Z_i=\ell}\ind_{Y_i=a})
-\mathbb{E}(I_i\ind_{Z_i=\ell})\mathbb{E}(\ind_{Y_i=a})\right)\\
&=&\sum_{i,\ell}\sum_{a=0}^N f(a+\ell)\sum_{a^-+a^+=a} 
\left(\mathbb{E}(I_i\ind_{Z_i=\ell}\ind_{Y_i^-=a^-}\ind_{Y_i^+=a^+})
-\mathbb{E}(I_i\ind_{Z_i=\ell})\mathbb{E}(\ind_{Y_i^-=a^-}\ind_{Y_i^+=a^+})\right),
\end{eqnarray*}
as $a\le N$.
We want to sort the terms by their sign so that every level we have only two terms to which we can
apply the mixing property. Put 
$$
\epsilon_{a^-,a^+}=\epsilon_{a^-,a^+}(i,\ell)
=\sgn \left(\mathbb{E}(I_i\ind_{Z_i=\ell}\ind_{Y_i^-=a^-}\ind_{Y_i^+=a^+})
-\mathbb{E}(I_i\ind_{Z_i=\ell})\mathbb{E}(\ind_{Y_i^-=a^-}\ind_{Y_i^+=a^+})\right),
$$
and then $|B_{i,\ell}|=B_{i,\ell}^-+B_{i,\ell}^+$, where 
\[B_{i,\ell}^+=\sum_a |f(a+\ell)|\sum_{\substack{a^-+a^+=a\\ \epsilon_{a^-,a^+}=+1} }
\left(\mathbb{E}(I_i\ind_{Z_i=\ell}\ind_{Y_i^-=a^-}\ind_{Y_i^+=a^+})
-\mathbb{E}(I_i\ind_{Z_i=\ell})\mathbb{E}(\ind_{Y_i^-=a^-}\ind_{Y_i^+=a^+})\right)\]
and
$$
B_{i,\ell}^-=-\sum_a|f(a+\ell)|\sum_{\substack{a^-+a^+=a\\ \epsilon_{a^-,a^+}=-1} }
\left(\mathbb{E}(I_i\ind_{Z_i=\ell}\ind_{Y_i^-=a^-}\ind_{Y_i^+=a^+})
-\mathbb{E}(I_i\ind_{Z_i=\ell})\mathbb{E}(\ind_{Y_i^-=a^-}\ind_{Y_i^+=a^+})\right).
$$

Let us begin estimating $B_{i,\ell}^+$, the case of $B_{i,\ell}^-$ will be done below. We partition the sum over $a$ into segments of exponential progression.
For that purpose let us put, using $N=1/\mu(U)$
$$
g_m(\ell)=\max_{N2^{-m}\le a<N2^{-m+1}}|f(a+\ell)|
$$
 which by Proposition~1 of~\cite{barbour1992compound} satisfies $g_m\le c_42^m/N$ for some constant $c_4$.
 We now get
\begin{eqnarray*}
B_{i,\ell}^+&\le&\sum_{m=0}^{\lg N}g_m\sum_{a=2^{-m}N}^{2^{-m+1}N-1}
\sum_{\substack{a^-=0\\ \epsilon_{a^-,a^+}=+1}}^a
\left(\mathbb{E}(I_i\ind_{Z_i=\ell}\ind_{Y_i^-=a^-}\ind_{Y_i^+={a-a^-}})\right.\\
&&\hspace{7cm}\left.-\mathbb{E}(I_i\ind_{Z_i=\ell})\mathbb{E}(\ind_{Y_i^-=a^-}\ind_{Y_i^+= {a-a^-}})\right)\\
&\le&\sum_{m=0}^{\lg N}c_4\frac{2^m}N\sum_{a^-=0}^{2^{-m+1}N-1}
\left(\mathbb{E}(I_i\ind_{Z_i=\ell}\ind_{Y_i^-=a^-}\ind_{\mathcal{Y}^+_{i,m}(a^-)})
-\mathbb{E}(I_i\ind_{Z_i=\ell})\mathbb{E}(\ind_{Y_i^-=a^-}\ind_{\mathcal{Y}^+_{i,m}(a^-)})\right)
\end{eqnarray*}
where the set  
\begin{eqnarray*}
\mathcal{Y}^+_{i,m}(a^-)
&=&\bigcup_{\substack{2^{-m}N-a^-\le a^+<2^{-m+1}N-a^-\\ \epsilon_{a^-,a^+=+1}}}\{Y_i^+=a^+\}\\
&=&\{2^{-m}N-a^-\le Y_i^+<2^{-m+1}N-a^-: \epsilon_{a^-,a^+}=+1\}
\end{eqnarray*}
cuts out slices of exponential progression.
By the mixing property
\begin{eqnarray*}
\mathbb{E}(I_i\ind_{Z_i=\ell}\ind_{Y_i^-=a^-}\ind_{\mathcal{Y}^+_{i,m}(a^-)})
&=&\mathbb{E}(I_i\ind_{Z_i=\ell}\ind_{Y_i^-=a^-})
\mathbb{P}(\mathcal{Y}^+_{i,m}(a^-))\\
&&\hspace{2cm}+\mathcal{O}^*(\mathbb{E}(I_i\ind_{Z_i=\ell}\ind_{Y_i^-=a^-})\phi(\Delta-n))\\
&=&\mathbb{P}(Y_i^-=a^-)\mathbb{E}(I_i\ind_{Z_i=\ell})\mathbb{P}(\mathcal{Y}^+_{i,m}(a^-))\\
&&\hspace{2cm}+\mathcal{O}^*(\mathbb{P}(Y_i^-=a^-)\mathbb{P}(\mathcal{Y}^+_{i,m}(a^-))\phi(\Delta-n))\\
&&\hspace{2cm}+\mathcal{O}^*(\mathbb{E}(I_i\ind_{Z_i=\ell}\ind_{Y_i^-=a^-})\phi(\Delta-n))
\end{eqnarray*}
where the symbol $\mathcal{O}^*$ indicates an error term where the implied constant
is $1$ (i.e.\ if $G=\mathcal{O}^*(\varepsilon)$ then $|G|\le \varepsilon$).
Also
$$
\mathbb{P}(Y_i^-=a^-)\mathbb{P}(\mathcal{Y}^+_{i,m}(a^-))
=\mathbb{E}(\ind_{Y_i^-=a^-}\ind_{\mathcal{Y}^+_{i,m}(a^-)})
+\mathcal{O}^*(\mathbb{P}(Y_i^-=a^-)\phi(2\Delta-n)).
$$
Thus, for $m=0, 1, 2,\dots, \lg N$,
\begin{eqnarray*}
\sum_{a^-=0}^{2^{-m+1}N-1}
\left|\mathbb{E}(I_i\ind_{Z_i=\ell}\ind_{Y_i^-=a^-}\ind_{\mathcal{Y}^+_{i,m}(a^-)})
-\mathbb{E}(I_i\ind_{Z_i=\ell})\mathbb{E}(\ind_{Y_i^-=a^-}\ind_{\mathcal{Y}^+_{i,m}(a^-)})\right|\hspace{-12cm}&&\\
&\le&\phi(\Delta-n)\sum_{a^-=0}^{2^{-m+1}N-1}
\left(\mathbb{P}(Y_i^-=a^-)\mathbb{P}(\mathcal{Y}^+_{i,m}(a^-)
+\mathbb{E}(I_i\ind_{Z_i=\ell}\ind_{Y_i^-=a^-})
+\mathbb{E}(I_i\ind_{Z_i=\ell})\mathbb{P}(Y_i^-=a^-)\right)\\
&\le&3\phi(\Delta-n)\sum_{a^-=0}^{2^{-m+1}N-1}
\mathbb{E}(I_i\ind_{Z_i=\ell}\ind_{\mathcal{Y}^+_{i,m}(a^-)})\\
&\le&3\phi(\Delta-n)\mathbb{P}(Z_i=\ell, I_i=1)
\end{eqnarray*}
and 
$$
B_{i,\ell}^+\le 2c_5 \phi(\Delta-n)\sum_{m=0}^{\lg N}\frac{2^m}N
\le 2c_5\phi(\Delta-n).
$$
Similarly one estimates the negative term $B_{i,\ell}^-$
which yields the estimate $B_{i,\ell}^-\le c_5\phi(\Delta-n)$.
Along the way one uses the set
$$
\mathcal{Y}^-_{i,m}(a^-)
=\bigcup_{\substack{2^{-m}N-a^-\le a^+<2^{-m+1}N-a^-\\ \epsilon_{a^-,a^+=-1}}}\{Y_i^+=a^+\},
$$
where we note that
$$
\mathcal{Y}^-_{i,m}(a^-)\cup\mathcal{Y}^+_{i,m}(a^-)=\{2^{-m}N-a^-\le Y_i^+<2^{-m+1}N-a^-\}.
$$
Consequently
\begin{equation}\label{eq:B}
|B|
\le\sum_{i=0}^{\frac{t}{\mu(U)}}\sum_\ell(B_{i,\ell}^-+B_{i,\ell}^+)
\le\frac{4tc_5}{\mu(U)} \sum_\ell{\phi(\Delta-n)}\mathbb{E}(I_i\ind_{Z_i=\ell})
\le \frac{c_6 t\phi(\Delta-n)}{\mu(U)}.
\end{equation}

\item  In order to estimate $C$ we proceed in a similar way as in part~(ii). Indeed one has 
$$
|\mathbb{E}(f(Y_i+\ell)-f(W+\ell))|
\le\|f'\|\mathbb{E}(I_i+Z_i+V_i)
\le c_7 \Delta\mu(U).
$$
Thus
\begin{equation}\label{eq:C}
|C|\le\sum_i\sum_\ell c_7\Delta\mu(U)\mathbb{E}(I_i\ind_{Z_i=\ell})
\le tc_7 \Delta\mathbb{E}(I_i\ind_{Z_i\ge1})
\le c_8 t\Delta \mu(U).
\end{equation}
\end{enumerate}

Combining~\eqref{eq:A}, \eqref{eq:B} and \eqref{eq:C} we finally achieve that, for any  $K<\Delta<t/\mu(U)$
$$
|\mu(W\in E)-\tilde\nu(E) |
\le C_1 t\!\left(K\frac{\phi(\Delta-n)}{\mu(U)}+\Delta\mu(U)+\phi^1(K/2)+\sum_{j=K/2}^n\mu(U^j)\right),
$$
for some $C_1\ge c_2c_3+c_6+c_8$ thus proving~\eqref{eq:th1} {in the right $\phi$-mixing case.}

{To get the corresponding conclusion in the left $\phi$-mixing case notice that in the estimates of $A_{i,\ell}$ and 
$A$ we have to replace $U^j$ by $\tilde{U}^j$ and obtain that the estimate~\eqref{eq:A} is modified to 
$$
|A|\le c_2c_3 t\left[\Delta\mu(U)+\phi^1(K/2)+\sum_{j=K/2}^n\mu(\tilde{U}^j)\right].
$$
The estimates that lead to the bound of the term $B$ will be the same although the order of 
splitting and combining terms is the reverse.}
\end{proof}

\begin{remark}\label{rem:bothsides}
{The only part of the proof of Theorem~\ref{theorem.compound.poisson} where right $\phi$-mixing property is required is for the estimate of $A_{i,\ell}$ (specifically display \eqref{eq:critical_part_mixing}).} Suppose we are in the invertible case and $\Omega$ is a shift space with 
map $T=\sigma$ the left shift map. If we were to use the left $\phi$-mixing property then the sets 
$U^i$ would have to be replaced by the set $\tilde{U}^i=\sigma^{-(n-i)}A_i(\sigma^{n-i}U)$.
Now we can take the sets $U$ to be $n$-approximation of an unstable leaf $\Gamma$ through a point $x\in\Omega$
e.g.\ $\Gamma=\{y\in\Omega: y_i=x_i\;\forall\;i\le0\}$. Obviously $\Gamma$ is a nullset
but in this case we get that $\tilde{U}^i=\Omega$ the entire space whenever $i<n/2$.
The right $\phi$-mixing property avoids this problem. {This also explains how the proof has to be changed if we assume left $\phi$-mixing instead of right $\phi$-mixing: the only difference is in display \eqref{eq:critical_part_mixing} where we have to use $\tilde{U}^{j/2}$ instead of $U^{j/2}$.}
\end{remark}

\subsection{Proof of Theorem~\ref{corollary.compound.poisson_psi}}

This proof is similar to the one of Theorem~\ref{theorem.compound.poisson}
but allows for some simplifications which we outline below.

\begin{proof}[Proof of Theorem~\ref{corollary.compound.poisson_psi}]
As above let $\tilde\nu$ be the compound Poisson distribution for $t{\tilde\lambda_\ell}$, $\ell\in\mathbb{N}$ as defined in the statement of the corollary and the preceeding theorem. Also we let $E\subset{N}_0$ and $f$ the solution of the Stein equation $\mathscr{S}f=\ind_E-\tilde\nu(E)$.

For $K<\!\!<\Delta<\!\!<t/\mu(U)$ we denote as above by $Z_i$ the close range interactions,
by $V_i^\pm$ the gap terms and by $Y_i^\pm$ the two halves of the principal terms.

As in the proof of the theorem we split the error into three parts:
$$
\mu(W\in E)-\tilde\nu(E)=A+B+C,
$$
where $A$ and $C$ cover short term gap interactions in the dependent and independent case and 
$B$ is the error that comes from the principal term with long range interactions.
We now proceed to show that each of the three terms can be upper bounded in order to give the  stated bounded. 
\begin{enumerate}[(i)]
\item For the first term we write $A=\sum_{i,\ell}A_{i,\ell}$ where
$$
|A_{i,\ell}|=|\mathbb{E}[I_i\ind_{Z_i=\ell}(f(Y_i+V_i+\ell))-f(Y_i+\ell)]|
\le\|f'\|\mathbb{E}[I_i\ind_{Z_i=\ell}V_i].
$$
The $\psi$-mixing property then yields ($U^\ell=A_\ell(U)$)
\begin{eqnarray*}
\sum_\ell\mathbb{E}(I_i\ind_{Z_i=\ell}V_i^+)
&\le&\sum_{j=K+1}^{2n}\mu(U^{j/2}\cap T^{-j}U)
+\sum_{j=2n+1}^{K+\Delta}\mu(U\cap T^{-j}U)\\
&\le&\mu(U)\left(\sum_{j=K+1}^{2n}\mu(U^{j/2})(1+\psi(j/2))
+\sum_{j=2n+1}^{K+\Delta}\mu(U)(1+\psi(j-n))\right)\\
&\le&\mu(U)\left(2\sum_{j=K/2}^{n}\mu(U^{j})+2\Delta\mu(U)\right)
\end{eqnarray*}
for $K$ large enough,
and similarly for the left part of the gap $V_i^-$.
Thus, since $\|f'\|=\mathcal{O}(1)$ by Theorem~4 in~\cite{barbour1992compound},
$$
|A|\le c_1  t\left(\sum_{j=K/2}^{n}\mu(U^{j})+2\Delta\mu(U)\right).
$$

\item We now estimate  $B$ which we split as before 
$$
B=\sum_{i}\sum_\ell B_{i,\ell},
$$ 
$|B_{i,\ell}|=B_{i,\ell}^-+B_{i,\ell}^+$, where for $\epsilon=+1,-1$:
$$
B_{i,\ell}^\epsilon=\epsilon\sum_a |f(a+\ell)|\sum_{\substack{a^-+a^+=a\\  \epsilon_{a^-,a^+}=\epsilon} }
\left(\mathbb{E}(I_i\ind_{Z_i=\ell}\ind_{Y_i^-=a^-}\ind_{Y_i^+=a^+})
-\mathbb{E}(I_i\ind_{Z_i=\ell})\mathbb{E}(\ind_{Y_i^-=a^-}\ind_{Y_i^+=a^+})\right)
$$
with
$$
\epsilon_{a^-,a^+}=\epsilon_{a^-,a^+}(i,\ell)
=\sgn \left(\mathbb{E}(I_i\ind_{Z_i=\ell}\ind_{Y_i^-=a^-}\ind_{Y_i^+=a^+})
-\mathbb{E}(I_i\ind_{Z_i=\ell})\mathbb{E}(\ind_{Y_i^-=a^-}\ind_{Y_i^+=a^+})\right).
$$

Let us begin estimating $B_{i,\ell}^+$, the case of $B_{i,\ell}^-$ will be done below. We partition the sum over $a$ into segments of exponential progression.
For that purpose let us put, using $N=1/\mu(U)$
$$
g_m(\ell)=\max_{N2^{-m}\le a<N2^{-m+1}}|f(a+\ell)|
$$
 which by Proposition~1 of~\cite{barbour1992compound} satisfies $g_m\le c_12^m/N$ for some constant $c_1$. We now get
$$
B_{i,\ell}^\epsilon
\le\epsilon\sum_{m=0}^{\lg N}c_1\frac{2^m}N\sum_{a^-=0}^{2^{-m+1}N-1}
\left(\mathbb{E}(I_i\ind_{Z_i=\ell}\ind_{Y_i^-=a^-}\ind_{\mathcal{Y}^+_{i,m}(a^-)})
-\mathbb{E}(I_i\ind_{Z_i=\ell})\mathbb{E}(\ind_{Y_i^-=a^-}\ind_{\mathcal{Y}^+_{i,m}(a^-)})\right)
$$
where as above
\begin{eqnarray*}
\mathcal{Y}^+_{i,m}(a^-)
&=&\bigcup_{\substack{2^{-m}N-a^-\le a^+<2^{-m+1}N-a^-\\ \epsilon_{a^-,a^+=+1}}}\{Y_i^+=a^+\}\\
&=&\{2^{-m}N-a^-\le Y_i^+<2^{-m+1}N-a^-: \epsilon_{a^-,a^+}=+1\}
\end{eqnarray*}
cuts out slices of exponential progression.
By the $\psi$-mixing property
\begin{eqnarray*}
\mathbb{E}(I_i\ind_{Z_i=\ell}\ind_{Y_i^-=a^-}\ind_{\mathcal{Y}^+_{i,m}(a^-)})
&=&\mathbb{E}(I_i\ind_{Z_i=\ell}\ind_{Y_i^-=a^-}))
\mathbb{P}(\mathcal{Y}^+_{i,m}(a^-))(1+\mathcal{O}^*(\psi(\Delta-n)))\\
&=&\mathbb{P}(Y_i^-=a^-)\mathbb{E}(I_i\ind_{Z_i=\ell})\mathbb{P}(\mathcal{Y}^+_{i,m}(a^-))(1+\mathcal{O}^*(\psi(\Delta-n)))\\
&=&\mathbb{E}(I_i\ind_{Z_i=\ell})\mathbb{E}(\ind_{Y_i^-=a^-}\ind_{\mathcal{Y}^+_{i,m}(a^-)})(1+\mathcal{O}^*(\psi(\Delta-n)))
\end{eqnarray*}
where the symbol $\mathcal{O}^*$ indicates an error term where the implied constant
is $1$ (i.e.\ if $G=\mathcal{O}^*(\varepsilon)$ then $|G|\le \varepsilon$).
Thus
\begin{eqnarray*}
B_{i,\ell}^\epsilon
&\le&c_1\sum_{m=0}^{\lg N}\frac{2^m}N\sum_{a^-=0}^{2^{-m+1}N-1}
\mathbb{E}(\ind_{Y_i^-=a^-}\ind_{\mathcal{Y}^+_{i,m}(a^-)})\mathbb{E}(I_i\ind_{Z_i=\ell})\psi(\Delta-n)\\
&\le& c_1\psi(\Delta-n)\sum_{m=0}^{\lg N}\frac{2^m}N \mathbb{E}(I_i\ind_{Z_i=\ell})\\
&\le&  2c_1\psi(\Delta-n)\mathbb{E}(I_i\ind_{Z_i=\ell}) 
\end{eqnarray*}
and consequently 
$$
|B|
\le 2 c_1\psi(\Delta-n)\sum_{i=1}^N\sum_{\ell=1}^{2K+1}\mathbb{E}(I_i\ind_{Z_i=\ell}) 
\le 2c_1\psi(\Delta-n)N\mathbb{E}(I_i\ind_{Z_i\ge1})
\le 2c_3t \psi(\Delta-n).
$$

\item  The estimate of the term $C$ is exactly the one from Theorem~\ref{theorem.compound.poisson},
\[
|C|\le c_2 t\Delta \mu(U).
\]
\end{enumerate}

Combining the estimates we end up with 
$$
|\mu(W\in E)-\tilde\nu(E) |
\le C_1't \inf_{K<\Delta<t/\mu(U)} \!\left(\psi(\Delta-n)+\Delta\mu(U)+\sum_{j=K/2}^n\mu(U^j)\right)
$$
for some constant $C_1'$.
\end{proof}

\subsection{Proof of Theorem~\ref{main.theorem}}
Recall that we start with a nested sequence of sets $U_n,n\ge1$. For $K<t/\mu(U_n)$ we define
 $Z_i^+=\sum_{j=0}^KI_{i+j}$ and $Z_i^-=\sum_{j=1}^KI_{i-j}$, where we assume that $i\ge K$. Let us also define $W^L=\sum_{\ell=0}^L I_\ell$. In order to prove this result we first state the following lemma taken from~\cite{haydn2020limiting}.  

\begin{lemma} \label{tail.lemma}
Assume that the limits $\alpha_k,k\ge1$ (see \eqref{eq:alpha}) exist and furthermore  $\sum_{k=1}^\infty k^2\alpha_k<\infty$. Then for every $\eta>0$ there exists an $L_0$ so that for all $L\ge L_0$:
 $$
\left|\mathbb{E}(\ind_{Z_i^+=k}\ind_{Z_i^-=\ell-k}I_i)-\mathbb{E}(\ind_{Z_i^+=k'}\ind_{Z_i^-=\ell-k'}I_i)\right|
\le\eta\mu(U)
$$
for all $n$ large enough (depending on $L,\ell$).
\end{lemma}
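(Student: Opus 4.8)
The plan is to pass to the conditional measure $\mu_{U_n}$ and show that, conditionally on $I_i=1$, the joint law of the pair $(Z_i^-,Z_i^+)$ becomes — as $n\to\infty$ and then the window size (which we take equal to $L$) grows — invariant under moving one unit of mass from $Z_i^-$ to $Z_i^+$ and back, hence depends on $Z_i^-+Z_i^+$ only. Concretely, write $\gamma_{j,k}(L,U_n):=\mu_{U_n}(Z_i^-=j,\,Z_i^+=k)$, which by $T$-invariance does not depend on $i$ in the admissible range; then the left-hand side of the lemma equals $\mu(U_n)\,\bigl|\gamma_{\ell-k,k}(L,U_n)-\gamma_{\ell-k',k'}(L,U_n)\bigr|$, so it suffices to bound $\bigl|\gamma_{\ell-k,k}-\gamma_{\ell-k',k'}\bigr|\le\eta$. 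Recall from \eqref{eq:alpha} that $\mu_{U_n}(Z_i^+=k)=\alpha_k(L,U_n)=\sum_{j}\gamma_{j,k}(L,U_n)$, and note that by assumption (3) of Theorem~\ref{main.theorem} these quantities, together with $\sum_k k^2\alpha_k(L,U_n)$, are controlled uniformly in $n$ once $n$ is large.

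The core step is a re-rooting argument based on the induced (first-return) map $R=T^{\tau_{U_n}}$ of $T$ to $U_n$, which preserves $\mu_{U_n}$ by Poincar\'e recurrence. On the event $\{Z_i^+\ge2\}$ the next visit $i'=i+\tau_{U_n}(T^ix)$ lies in $[i+1,i+L]$, so that $\tau_{U_n}\le L$; moving the reference visit from $i$ to $i'$ shifts the observation window $[i-L,i+L]$ by $\tau_{U_n}$ and, away from the two "edge zones" of width $\tau_{U_n}$ at the ends (where the old and new windows disagree), leaves the visit configuration unchanged while turning the visit at $i$ into the last backward visit of $i'$. Hence $\{Z_i^-=j,\,Z_i^+=k\}$ is carried onto $\{Z_{i'}^-=j+1,\,Z_{i'}^+=k-1\}$ except when some visit falls inside one of these edge zones, and invariance of $\mu_{U_n}$ under $R$ gives
$$
\gamma_{j,k}(L,U_n)=\gamma_{j+1,\,k-1}(L,U_n)+\mathcal{E}(L,U_n),\qquad k\ge2,
$$
with the symmetric identity (re-rooting at the previous visit) available when $j\ge1$; here $\mathcal{E}(L,U_n)$ is bounded by the $\mu_{U_n}$-probability that some visit lies within the outermost portion of an $L$-window around a reference visit, or that the visits in such a window are spread over a distance comparable to $L$.

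It remains to show $\mathcal{E}(L,U_n)$ is negligible. Since only visits lying in the window matter, and since by assumption (3) the number of visits in an $L$-window around a reference visit and their spread are tight uniformly in large $n$ — finiteness of $\sum_k k^2\alpha_k$ is a second-moment bound on the conditional cluster sizes, which forces the visits to concentrate on a scale not growing with $L$ — the probability of a visit within distance $R$ of either endpoint of an $L$-window tends to $0$ as first $n\to\infty$ (using convergence of $\alpha_k(L,U_n)$) and then $L\to\infty$. Thus $\mathcal{E}(L,U_n)\to0$ in this iterated limit, uniformly over the finitely many admissible splits $0\le j\le\ell$. Telescoping the identity above from the split $(\ell-k,k)$ to $(\ell-k',k')$ in at most $\ell$ steps (with $\ell$ fixed) yields $\bigl|\gamma_{\ell-k,k}(L,U_n)-\gamma_{\ell-k',k'}(L,U_n)\bigr|\le\ell\,\mathcal{E}(L,U_n)\le\eta$ once $L\ge L_0$ and $n$ is large enough, which is the claim. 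The main obstacle is making the re-rooting identity precise and, above all, establishing the uniform-in-$n$ smallness of the edge error $\mathcal{E}$; this is exactly where the summability $\sum_k k^2\alpha_k<\infty$ is used, since it guarantees that in the regime $n\gg L$ the visits inside an $L$-window genuinely cluster and the edge contributions disappear as $L\to\infty$.
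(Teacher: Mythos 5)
The paper does not actually prove Lemma~\ref{tail.lemma}: it is imported without proof from \cite{haydn2020limiting} (``we first state the following lemma taken from...''), so there is no internal argument to compare against. That said, your re-rooting strategy is in substance the argument used in that reference: condition on $I_i=1$, observe that on $\{Z_i^-+Z_i^+=\ell\}$ the conditional law of the split $(Z_i^-,Z_i^+)$ should be invariant under moving the reference visit to the adjacent visit of the cluster, establish the one-step identity $\gamma_{j,k}=\gamma_{j+1,k-1}+\mathcal{E}$ up to an edge error, and telescope over the at most $\ell$ admissible splits. This is the right mechanism, and the identification of $\mu_{U_n}(Z_i^+=k)$ with $\alpha_k(L,U_n)$ is correct.

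Two steps still need to be made rigorous. First, for non-invertible $T$ the quantity $Z_i^-$ is a function of the orbit \emph{before} time $i$ and is not determined by the point $T^ix\in U_n$, so the identity cannot literally be obtained by pushing forward a function on $U_n$ under the induced map $R=T^{\tau_{U_n}}$; the clean route is to decompose over the value $\tau\in\{1,\dots,L\}$ of the gap to the next visit and use $T$-invariance of $\mu$ (i.e.\ that $\mu(T^{-i}U_n\cap\{Z_i^-=j,\,Z_i^+=k\})$ does not depend on $i$) to match the event rooted at time $i$ with the corresponding event rooted at time $i+\tau$. Second, the smallness of $\mathcal{E}(L,U_n)$ is asserted via a heuristic (``the second moment forces the visits to concentrate'') rather than derived. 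The quantitative point is that on $\{Z_i^-+Z_i^+=\ell\}$ the edge event forces two of the at most $\ell+1$ relevant visits to lie at mutual distance in $(L,2L]$, so that $\mathcal{E}$ is bounded by finitely many differences of the form $\hat\alpha_{m}(2L,U_n)-\hat\alpha_{m}(L,U_n)$ with $m\le\ell+1$; writing $\hat\alpha_m(L,U_n)=1-\sum_{k<m}\alpha_k(L,U_n)$, these converge as $n\to\infty$ and then $L\to\infty$ to $\hat\alpha_m-\hat\alpha_m=0$ by the assumed existence of the limits in \eqref{eq:alpha} and monotonicity in $L$. Note that since $\ell$ is fixed and $n$ is allowed to depend on $(L,\ell)$, no uniformity in $k$ is required here, so the hypothesis $\sum_kk^2\alpha_k<\infty$ is not really what controls $\mathcal{E}$; it is what later permits summation over $\ell$ in the proof of Theorem~\ref{main.theorem}. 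With these two repairs your argument is a correct reconstruction of the cited proof.
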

We are now ready to prove the theorem.

\begin{proof}[Proof of Theorem~\ref{main.theorem}]
 For $E\subset \mathbb{N}_0$ and $K<t/\mu(U_n)$
\begin{equation}\label{eq:triangular2}
|\mu(W_n\in E)-\tilde\nu(E)|\le |\mu(W_n\in E)-\tilde\nu_{K,U_n}(E)|+|\tilde\nu_{K,U_n}(E)-\tilde\nu(E)|
\end{equation}
where $\tilde\nu_{K,U_n}$ is as in the statement of Theorem \ref{theorem.compound.poisson}. In order to prove Theorem~\ref{main.theorem} it is therefore enough to prove that both terms on the RHS converge to $0$ as $n\rightarrow0$ and $K\rightarrow\infty$. We proceed in two steps. 
 \begin{enumerate}
\item We start proving that the second term on the RHS of \eqref{eq:triangular2} converges to 0.  First recall the definitions of $\alpha_k(L,U_n),\alpha_k(L)$ and $\alpha_k$ given in \eqref{eq:alpha} and that of $\lambda_\ell(K,U_n)$ given in $\eqref{eq:tildelambda}$.  We have that $\tilde\nu$ and $\tilde\nu_{K,U_n}$ are Poisson compounds with parameters $\tilde\lambda_\ell:=\alpha_k-\alpha_{k+1}$ and $\lambda_\ell(K,U_n)$ respectively. 
So what has to be proved is that, provided $\alpha_k$ exists, we have 
$\alpha_k-\alpha_{k+1}=\lim_{K\to\infty}\lim_{n\to\infty}\lambda_\ell(K,U_n)$, that is, the convergence of the parameters of the involved Poisson compounds distributions.

Observe that $\alpha_k(K,U)$ can be written as $\mathbb{E}(\ind_{Z_0=k}|I_0)$. On the other hand,  by translation invariance, $\mathbb{E}(\ind_{Z_0=k}I_0)=\mathbb{E}(\ind_{Z_i^+=k}I_i)$ for any $i\ge1$. We therefore work on the later quantity. 
Consider the disjoint union
$$
\{Z_i^+=k\}\cap \{I_i=1\}=\bigcup_{\ell=k}^\infty\{Z_i^+=k\}\cap\{Z_i^-=\ell-k\}\cap\{I_i=1\}.
$$
By invariance, the expectations
$\mathbb{E}(\ind_{Z_i^+=k}\ind_{Z_i^-=\ell-k}|I_i=1)$
are equal for all $i$. Let us note that in the conditions of Theorem \ref{main.theorem}, we have $\sum_{k\ge1}k^2\alpha_k<\infty$. Thus we can use  Lemma~\ref{tail.lemma} which states that if $\eta>0$, then for all $K$ large enough
$$
\left|\mathbb{E}(\ind_{Z_i^+=k}\ind_{Z_i^-=\ell-k}I_i)-\mathbb{E}(\ind_{Z_i^+=k'}\ind_{Z_i^-=\ell-k'}I_i)\right|
\le\eta\mu(U)
$$
for $k,k'=1,2\dots,\ell$. Hence, since $Z_i=Z_i^-+Z_i^+$
$$
\mathbb{E}(\ind_{Z_i^+=k}\ind_{Z_i^-=\ell-k}I_i)
=\frac1\ell\mathbb{E}(\ind_{Z_i=\ell}I_i)(1+\mathcal{O}(\eta))
=\tilde\lambda_\ell(K,U_n)\mu(U)(1+\mathcal{O}(\eta))
$$
and therefore
$$
\mathbb{E}(\ind_{Z_i^+=k}I_i)
=\sum_{\ell=k}^\infty\mathbb{E}(\ind_{Z_i^+=k}\ind_{Z_i^-=\ell-k}I_i)
=(1+\mathcal{O}(\eta))\mu(U)\sum_{\ell=k}^\infty\tilde\lambda_\ell(K,U_n).
$$
According to what we said above, we therefore have
\begin{eqnarray*}
\alpha_k(K,U)&=&\mathbb{E}(\ind_{Z_0^+=k}|I_0)\\
&=&\frac{\mathbb{E}(\ind_{Z_0^+=k}I_0)}{\mu(U_n)}=\frac{\mathbb{E}(\ind_{Z_i^+=k}I_i)}{\mu(U_n)}\\
&=&(1+\mathcal{O}(\eta))\sum_{\ell=k}^\infty\tilde\lambda_\ell(K,U_n).
\end{eqnarray*}
So in particular $\alpha_k(K,U_n)-\alpha_{k+1}(K,U_n)=(1+\mathcal{O}(\eta))\tilde\lambda_k(K,U_n)$, valid for any positive $\eta\rightarrow0$, thus provided the limit $\alpha_k$ exists, we have $\lim_K\lim_n\tilde\lambda_k(K,U_n)= \alpha_k-\alpha_{k+1}$.

\item In order to prove that the first term of the RHS of \eqref{eq:triangular2} converges to 0 we naturally use Theorem \ref{theorem.compound.poisson}.
Let $\beta\in(0,1)$ and choose $\Delta=\mu(U_n)^{-\beta}$, we get by \eqref{eq:th1} that $|\mu(W_n\in E)-\tilde\nu_{K,U_n}(E) |$ is bounded above by
$$
 C_1t \!\left(K\frac{\phi(\mu(U_n)^{-\beta})}{\mu(U)}
 +\mu(U_n)^{1-\beta}
 +\phi^1(K/2)
 +\sum_{j=K/2}^n\mu(U_n^j)\right)
$$
where we recall that, by assumption, for any sufficiently large $n$'s, the fourth term is bounded above by   $a_{K/2}$. The two first terms  go to zero as $n$ diverges with a suitable choice of $\beta<1$ so that $\beta>\frac1\gamma$. Then, taking $K\to\infty$ we get by assumption that $a_{K/2}$
and $\phi^1(K/2)$ vanish as well ({by summability of $\phi$)}. This concludes the proof of the theorem. 
\end{enumerate}
\end{proof}


\section{Proofs of the results of Sections \ref{sec:discussion} and \ref{sec:examples2}}\label{sec:remaining_results}

\begin{proof}[Proof of Proposition \ref{prop:rege_phi}]
 For any two measurable sets $A\in\sigma(X_0,\ldots,X_{n-1})$ and $B\in\sigma(X_{n+k-1}^\infty)$ with positive probability, let 
 \[
\phi_{A,B}(k):= |\mathbb P(X\in B|X\in A)-\mathbb P(X\in B)|.
 \]
Defining $l(n):=\inf\{j\ge1:T_j\ge n\}$ and using the regenerative property, we get
\begin{align*}
\phi_{A,B}(k)&=\left|\sum_{i\ge1}\mathbb P(T_{l(n)}-n=i,X\in B|X\in A)- \mathbb P(X\in B)\right|\\
&=   \left|\sum_{i> k}\mathbb P(T_{l(n)}-n=i,X\in B|X\in A)+\sum_{i=1}^{k}\mathbb P(T_{l(n)}-n=i,X\in B)\right.\\&\,\,\,\,\,\,\,-\sum_{i\ge1}\mathbb P(T_{l(n)}-n=i,X\in B)\Big|\\
&=   \left|\sum_{i> k}\mathbb P(T_{l(n)}-n=i,X\in B|X\in A)-\sum_{i>k}\mathbb P(T_{l(n)}-n=i,X\in B)\right|\\
&\le  \sum_{i> k}\left|\mathbb P(T_{l(n)}-n=i,X\in B|X\in A)-\mathbb P(T_{l(n)}-n=i,X\in B)\right|\\
&\le  \sum_{i> k}\left(\mathbb P(T_{l(n)}-n=i|X\in A)+\mathbb P(T_{l(n)}-n=i)\right).
\end{align*}
We now consider the particular case when $\{X\in A\}=\bigcap_{i=0}^{n-1}\{X_i=b_i\}$ for some string $b_0,\ldots,b_{n-1}$ of symbols of $\mathcal A$. We get
\begin{align*}
\phi_{A,B}(k)&\leq \sum_{i> k}\left(\mathbb P(T_{1}=i|X_0=b_{n-1})+\mathbb P(T_{1}=i)\right)\\
&=  \sum_{i> k}\left(\bar q_{b_{n-1}}(i)+\sum_a\bar p(a)\bar q_a(i)\right)\\
&\le 2 \sup_{a\in\mathcal A}\sum_{i> k}\bar q_{a}(i).
\end{align*}
\end{proof}

\begin{proof}[Proof of Proposition \ref{prop:polya_gmeasure}]
In the symbolic setting, if $x$ has prime period $m$, we can write $x=s^{\infty}$, the concatenation of infinitely many times a fixed string $s:=s_1\ldots s_m$ in which $s_i\in \mathcal A$ for $i=1,\ldots m$. In this case, let $k_n:=\lfloor n/m\rfloor$ and $r_n:=n-mk_n$, then
\begin{align*}
A_{n}(x)&=s^{k_n}\,s_{1}^{r_n}\\
A_{n+m}(x)&=s^{k_n}\,s_{1}^{r_n}\,s_{r_n+1}^{m}s_{1}^{r_n}
\end{align*}
where we used the shorthand notation $s_i^j:=s_i\ldots s_j$ for $i\le j$ and where we also write
$s^k$ for the word $s$ repeated $k$ times.
So 
\begin{align*}
\frac{\mu(A_{n+m}(x))}{\mu(A_n(x))}&=\frac{\mu([s^{k_n}\,s_{1}^{r_n}\,s_{r_n+1}^{m}s_{1}^{r_n}])}{\mu([s^{k_n}\,s_{1}^{r_n}])}=\frac{\mu([ss^{k_n}s_{1}^{r_n}])}{\mu([s^{k_n}\,s_{1}^{r_n}])}
\end{align*}
with the convention that $a_1^0=\emptyset$ (to include the case $r_n=0$).
The later ratio equals 
\begin{equation*}
\frac{\mu([s_1s_2^{m-1}s^{k_n-1}s_{1}^{r_n}])}{\mu([s_2^{m-1}s^{k_n-1}s_{1}^{r_n}])}\frac{\mu([s_2^{m-1}s^{k_n-1}s_{1}^{r_n}])}{\mu([s_3^{m-1}s^{k_n-1}s_{1}^{r_n}])}\ldots\frac{\mu([s_ms^{k_n}\,s_{1}^{r_n}])}{\mu([s^{k_n}\,s_{1}^{r_n}])},
\end{equation*}
which can be re-written as 
\begin{equation}\label{eq:sequences_limit}
\frac{\mu([x_1^{n+m}])}{\mu([x_2^{n+m}])}\frac{\mu([x_2^{n+m}])}{\mu([x_3^{n+m}])}\ldots\frac{\mu([x_m^{n+m}])}{\mu([x_{m+1}^{n+m}])}.
\end{equation}
So, for  $\lim_{n\to\infty}\frac{\mu(A_{n+m}(x))}{\mu(A_n(x))}$ to exist, it is enough that the limits of the  $m$ terms which are multiplied above, exist. 

But observe that, if $x$ is a continuity point for $g$, then 
\[
\text{var}_kg:=\sup\{|g(y)-g(z)|:y,z\in[x_1^k]\}\rightarrow0.
\]
Thus we can write
\begin{equation}\label{eq:variation_cylinder}
\mu([x_1^{n}])=\int_{[x_2^n]} g(x_1y)d\mu(y)=\mu([x_2^n])[g(x)+\mathcal O^*(\text{var}_{n-1}\,g
)]
\end{equation}
meaning that, for continuity points $x$, we have that $\mu(x_1^{n})/\mu(x_2^{n})$ converges to $g(x)$ as $n$ diverges

Coming back to \eqref{eq:sequences_limit}, due to the fact that $g$ is continuous at $x,\sigma(x),\ldots,\sigma^{m-1}(x)$, we conclude that the product converges to $\prod_{i=0}^{m-1}g(\sigma^{i}x)$, concluding the proof of existence and computation of $p$.


The existence of the limiting parameters is now proved, and according to the discussion of Subsection \ref{sec:examples}, the proof of the second statement follows automatically using our Theorem \ref{main.theorem}. Indeed, the  assumptions of this theorem are granted since as we already said, under summable variation, the measure is $\psi$-mixing, and moreover, the assumption that $g>0$ implies that $U_n^j=A_j(x)$ has exponentially decaying measure in $j$ since a simple argument shows that $\mu(A_j(x))\le (\sup g)^j$. 
\end{proof}

\begin{proof}[Proof of Proposition \ref{prop:polya_renewal}]
All the properties we use here, concerning the renewal measure, are proved in \cite{abadi/cardeno/gallo/2015}. First, the existence of the parameters follows from Theorem 3.2 therein. Under our conditions, the measure under study is left $\phi$-mixing with exponentially decaying rate $\phi$. Since the map is not invertible, by Remark \ref{rem:bothsides}, we only need left $\phi$-mixing, but in any case, the renewal measure is reversible, and therefore it enjoys both, left and right $\phi$-mixing with the same rate. Finally, since $p_i\in[\epsilon,1-\epsilon]$, the same holds for $g$, which automatically implies that $U_n^j=A_j(x)$ has exponentially decaying measure in $j$ as in the preceding proof. 
\end{proof} 

\begin{proof}[Proof of Proposition \ref{prop:furst}]
First of all, let us observe that the example is  $\psi$-mixing as it is a two-coordinates factor map of a product measure.  So it satisfies  condition (1) of Theorem \ref{main.theorem} concerning the mixing properties. The measure of cylinders of size $n$ decays exponentially fast, so the second condition of Theorem \ref{main.theorem} is granted as well. It only remains to check the third condition, but it holds if we are able to prove that the limit  \eqref{eq:limit} holds. This is what we prove below. 

Consider a point $y\in\{-1,+1\}^{\mathbb N}$  of prime period $m\ge1$. By \eqref{eq:sequences_limit} we have to compute the limit of
\[
\frac{\nu([y_1^{n+m}])}{\nu([y_2^{n+m}])}\frac{\nu([y_2^{n+m}])}{\nu([y_3^{n+m}])}\ldots\frac{\nu([y_m^{n+m}])}{\nu([y_{m+1}^{n+m}])}.
\]
Let us start computing (and prove it exists) the limit of
\[
\frac{\nu([1y_2^{n}])}{\nu([y_{2}^{n}])},n\ge1.
\]

Observe   that $x^+(y_{2}^\infty)$ and $x^-(y_{2}^\infty)$ are also periodic points. Let us denote by $m'$ their common prime period. Denote by $S_n$ the number of ones in 
\[
x^+(y_2^\infty)_2,x^+(y_2^\infty)_3,\ldots,x^+(y_2^\infty)_{n+2}
\]
(that is, of the $n+1$ first coordinates of $x^+(y_2^\infty)$). For technical matters, we will write $n$ as $k_n(2m'-1)+r_n$ where $k_n:=\lfloor n/(2m'-1)\rfloor$ and $r_n$ is the remaining part, strictly smaller than $2m'-1$. Observe that, since $2m'$ is a period of $x^+(y_2^\infty)$, we have $S_n=k_nS_{2m'-1}+R_n$ where $R_n:=S_n-k_nS_{2m'-1}<S_{2m'-1}$. Therefore
\[
\frac{S_n}{n+1}=\frac{k_nS_{2m'-1}+R_n}{k_n(2m'-1)+r_n+1}\rightarrow \frac{S_{2m'-1}}{2m'-1}.
\]

On the other hand, a simple calculation (see \cite{ferreira2020non}) gives that 
\[
\frac{\nu([1y_2^{n+1}])}{\nu([y_2^{n+1}])}=(1-\epsilon)\frac{\left(\frac{\epsilon}{1-\epsilon}\right)+\left(\frac{\epsilon}{1-\epsilon}\right)^{2(n+1)\left(\frac12-\frac{S_n}{n+1}\right)}}{1+\left(\frac{\epsilon}{1-\epsilon}\right)^{2(n+1)\left(\frac12-\frac{S_n}{n+1}\right)}}.
\]
We therefore have the following limits according to the values of $\epsilon$ and $\frac{S_{2m'-1}}{2m'-1}$: 
\begin{equation}\label{eq:conv_furst2}
\frac{\nu([1y_2^n])}{\nu([y_2^n])}\stackrel{n\rightarrow\infty}{\longrightarrow} \left\{\begin{array}{ccc}
		\epsilon&\text{ if }0<(\frac12-\epsilon)(\frac12-\frac{S_{2m'-1}}{2m'-1})\\
		1-\epsilon&\text{ if }0>(\frac12-\epsilon)(\frac12-\frac{S_{2m'-1}}{2m'-1})\\
	\end{array}\right.
\end{equation}
(Obviously, $\lim\frac{\nu([(-1)y_2^n])}{\nu([y_2^n])}=1-\lim\frac{\nu([1y_2^n])}{\nu([y_2^n])}$.) 
The same limiting value \eqref{eq:conv_furst2} holds for 
\[
\frac{\nu([1y_i^{n+m}])}{\nu([y_{i+1}^{n+m}])},i=3,\ldots,m.
\]
 So let $k$ be the number of $+1$'s in the period  of $y$ (which we recall is of size $m$).  According to \eqref{eq:sequences_limit} we can thus conclude, 
\begin{equation}\label{eq:limitingFurst}
p^{(2)}_m=\left\{\begin{array}{ccc}
		\epsilon^{k}(1-\epsilon)^{m-k}&\text{ if }0<(\frac12-\epsilon)(\frac12-\frac{S_{2m'-1}}{2m'-1})\\
                 (1-\epsilon)^{k}\epsilon^{m-k}&\text{ if }0>(\frac12-\epsilon)(\frac12-\frac{S_{2m'-1}}{2m'-1}).\\
\end{array}\right.
\end{equation}
\end{proof}

\begin{proof}[Proof of the first statement of Theorem \ref{theo:tempsync}] 
For simplicity, we do the proof with $m=2$, but the general case follows identically. By assumption $\hat\mu$ is $\psi$-mixing and consequently the conditions (1) and (2) of  Theorem \ref{main.theorem} are satisfied. So if we prove that $\hat\alpha_{k+1}$ exists for any $k$ and satisfies $\hat\alpha_{k+1}=p^k$ for some $p\in(0,1)$,
then we prove at once that Theorem \ref{main.theorem} holds and that the asymptotic distribution is Polya-Aeppli as stated.  
%

Write
\[
\hat\alpha_{k+1}:=\frac{\hat\mu(\bigcap_{i=0}^{k}\hat \sigma^{-i}S_n)}{\hat\mu(S_n)}=\frac{\hat\mu(S_n\cap \hat \sigma^{-1}S_n)}{\hat\mu(S_n)}\frac{\hat\mu(S_n\cap \hat \sigma^{-1}S_n\cap \hat \sigma^{-2}S_n)}{\hat\mu(S_n\cap \hat \sigma^{-1}S_n)}\ldots\frac{\hat\mu(\bigcap_{i=0}^{k}\hat \sigma^{-i}S_n)}{\hat\mu(\bigcap_{i=0}^{k-1}\hat \sigma^{-i}S_n)}
\] 
which by translation invariance writes
\begin{align*}
\frac{\hat\mu(S_n\cap \hat \sigma^{-1}S_n)}{\hat\mu(\hat \sigma^{-1}S_n)}\frac{\hat\mu(S_n\cap \hat \sigma^{-1}S_n\cap \hat \sigma^{-2}S_n)}{\hat\mu(\hat \sigma^{-1}S_n\cap \hat \sigma^{-2}S_n)}
\ldots\frac{\hat\mu(\bigcap_{i=0}^{k}\hat \sigma^{-i}S_n)}{\hat\mu(\bigcap_{i=1}^{k}\hat \sigma^{-i}S_n)}.
\end{align*}
Now,  for $j=1,\ldots,k$ and $n$ large enough
\[
\frac{\hat\mu(\bigcap_{i=0}^{j}\hat \sigma^{-i}S_n)}{\hat\mu(\bigcap_{i=1}^{j}\hat \sigma^{-i}S_n)}
=\frac{\hat\mu((x,y):x_1^{n+j}=y_1^{n+j})}{\hat\mu((x,y):x_2^{n+j}=y_2^{n+j})}
=:u_{n+j}
\]
Let us assume for now that $u_{n+1}=\frac{\hat\mu(S_n\cap \hat \sigma^{-1}S_n)}{\hat\mu(S_n)}$ converges, and let $p$ denote the limit. Then for any $j\ge1$, $u_{n+j}\rightarrow p$ and the limit defining $\hat\alpha_{k+1}$ exists and it equals $p^k$. 
In other words, provided the limit $p$ exists we always have, in the limit, a P\'olya-Aeppli distribution with parameter $t(1-p)$, as stated by the theorem.

So it only remains to prove the existence of the limit $p$. Consider the projection operator $\Pi:\Sigma\time\Sigma\rightarrow\{0,1\}^\mathbb{N}$ defined through $\Pi(x,y)=z$ where $z_i=\ind_{x_i=y_i}$.  With this we now have to check whether 
\[
\lim_n\frac{\hat\mu\circ\Pi^{-1}([1^{n+1}])}{\hat\mu\circ\Pi^{-1}([1^{n}])}=\lim_n\mathbb{E}_{\hat\mu\circ\Pi^{-1}}(\ind_{[1]}|\mathcal{F}_2^n)(1^{(\infty)})
\] 
exists. Using \cite[Proposition 5]{PPW82} we only have to prove that the measure $\hat\mu\circ\Pi^{-1}$ has a continuous and strictly positive $g$-function. By assumption, $\hat g$ is strictly positive and with summable variation.  By Theorem 1.1 of \cite{verbitskiy2011factors}, we automatically have that $\hat\mu\circ\Pi^{-1}$ has an everywhere continuous and strictly positive $g$-function. This concludes the proof of the theorem. 
\end{proof}

\begin{proof}[Proof of the second statement of Theorem \ref{theo:tempsync}]

For simplicity, we do the proof with $m=2$, but the general case follows identically. 
For that reason let $\hat\mu$ be the $\hat g$ measure on $\Sigma_B\times\Sigma_B$. As above, the first two conditions of Theorem \ref{main.theorem} are granted under our assumptions. We will show that $\hat\alpha_{2}$ exists by computing it, this will automatically grant $\hat\alpha_{k+1}=p^k$ and the third condition of Theorem \ref{main.theorem}, and conclude our proof. 

By conformality we have then for all finite words $\alpha, \beta$ that  
$$
\hat\mu(\sigma[\alpha]\times\sigma[\beta])=\int_{[\alpha]\times[\beta]}\hat{g}(x,y)^{-1}\,d\hat\mu(x,y).
$$
In particular, if we put $\hat{g}_k(x,y)=\prod_{j=0}^{k-1}\hat{g}(\sigma^j(x),\sigma^j(y))$,
then for $k$-words $\alpha',\alpha''$ and $n$-words $\beta',\beta''$ one has
$$
\hat\mu([\alpha'\beta']\times[\alpha''\beta''])
=\hat\mu([\beta']\times[\beta''])\hat{g}_k(\alpha'\beta',\alpha''\beta'')e^{\mathcal{O}(v^1_n)},
$$
where $v^1_n=\sum_{j=n}^\infty v_j$ is the tailsum of  $v_n=\var_n\,\hat{g}$ and 
$\hat{g}(\gamma',\gamma'')=\sup_{(x,y)\in[\gamma']\times[\gamma'']}\hat{g}(x,y)$.

By assumption $\hat\mu$ is $\psi$-mixing and consequently the conditions of  Theorem \ref{main.theorem} 
are satisfied if we prove that the following limit exists
\[
\hat\alpha_{k+1}=\lim_{n\to\infty}\frac{\hat\mu(\bigcap_{i=0}^{k}\hat \sigma^{-i}S_n)}{\hat\mu(S_n)}.
\]
Indeed
\begin{eqnarray*}
\hat\mu(\bigcap_{i=0}^{k}\hat \sigma^{-i}S_n)
&=&\hat\mu(S_{n+k})\\
&=&\sum_{\alpha\in\mathcal A^k}\sum_{\beta\in \mathcal A^n}\hat\mu([\alpha\beta]\times[\alpha\beta])\\
&=& \sum_{\beta\in \mathcal A^n}\hat\mu([\beta]\times[\beta])
\sum_{\alpha\in\mathcal A^k}\hat{g}_k(\alpha\beta,\alpha\beta)e^{\mathcal{O}(v^1_n)}.
\end{eqnarray*}
Since on the other hand $\hat\mu(S_n)=\sum_{|\beta|=n}\hat\mu([\beta]\times[\beta])$ we get
$$
\hat\alpha_{k+1}=\lim_{n\to\infty}\frac{ \sum_{\beta\in \mathcal A^n}\hat\mu([\beta]\times[\beta])
\sum_{\alpha\in\mathcal A^k}\hat{g}_k(\alpha\beta,\alpha\beta)e^{\mathcal{O}(v^1_n)}}
{\sum_{\beta\in \mathcal A^n}\hat\mu([\beta]\times[\beta])}.
$$
For any $n\ge1$, let us define on $\Sigma_B$ the measure 
$$
\nu_n=\frac1{Z_n}\sum_{\gamma\in\mathcal A^n}\hat\mu([\gamma]\times[\gamma])\delta_\gamma,
$$
where $\delta_\gamma$ is a point mass at an arbitrarily chosen point $x_\gamma\in [\gamma]\subset\Sigma_B$
depending only on the last symbol of $\gamma$ so that $x_{\sigma\gamma}=\sigma(x_\gamma)$
and $Z_n=\sum_{|\beta|=n}\hat\mu([\beta]\times[\beta])$ is the normalising factor.
Acting on functions $f:\Sigma_B\to \mathbb{R}$ we define the transfer operator  $\mathcal{L}$  by
$$
\mathcal{L}f(x)=\sum_{a\in\mathcal{A}}g^\Delta(ax)f(ax),
$$
where $g^\Delta:\Sigma_B\to\mathbb{R}$ is given by $g^\Delta(y)=\hat{g}(y,y)$.
Then for the action of $\mathcal{L}^k$ on $\nu_n$ we get
\begin{eqnarray*}
(\mathcal{L}^k\nu_n)(f) 
&=&\int\sum_{|\beta|=k}g^\Delta_k(\beta x)f(\beta x)\,d\nu_n(x)\\
&=&\frac1{Z_n}\sum_{|\gamma|=n}\sum_{|\beta|=k}
g^\Delta_k(\beta x_\gamma)\hat\mu([\gamma]\times[\gamma])f(\beta x_\gamma)\\
&=&\frac1{Z_n}\sum_{|\alpha|=n+k}\hat\mu([\alpha]\times[\alpha])f(\alpha  x_\alpha)\\
&=&\frac{Z_{n+k}}{Z_n}e^{\mathcal{O}(v^1_n)}\nu_{n+k}(f),
\end{eqnarray*}
where we used that by conformality 
$$
\hat\mu([\gamma]\times[\gamma])=\int_{[\beta\gamma]\times[\beta\gamma]} \hat{g}_k(x,y)^{-1}\,d\hat\mu(x,y)
$$
which implies
$\hat\mu([\gamma]\times[\gamma])g^\Delta_k(\beta x_\gamma)
=\hat\mu([\beta\gamma]\times[\beta\gamma])e^{\mathcal{O}(v^1_n)}$
as $x_{\beta\gamma}=\beta x_\gamma$.
That is, we can write 
$$
\nu_n=c_{n,k}e^{\mathcal{O}(v^1_k)}\mathcal{L}^{n-k}\nu_k,
$$
where $c_{n,k}$ is a normalising constant.

Now let $\nu$ be the unique conformal measure for $e^{-P}\mathcal{L}$,
where $P$ is the pressure of $\log g^\Delta$ (on $(\Sigma_B,\sigma)$). Evidently $e^{-P}\mathcal{L}\nu=\nu$
and there is an associated positive eigenfunction $h$ so that $e^{-P}\mathcal{L}h=h$.
For simplicity's sake we assume the normalisation $\nu(1)=\nu(h)=1$. Then
$$
e^{-\ell P}\mathcal{L}^\ell(f) = h\nu(f) +\mathcal{O}(\lambda^\ell),
$$
where $\lambda<1$ as $\mathcal{L}$ is quasi compact which is a consequence of 
exponentially decaying variation of $g^\Delta$.
Thus 
$$
e^{-\ell P}(\mathcal{L}^\ell\nu_k)(f)
=e^{-\ell P}\nu_k(\mathcal{L}^\ell(f))
=\nu_k(h)\nu(f
)+\mathcal{O}(\lambda^\ell)
$$
and consequently for every $k$ and function $f$:
\begin{eqnarray*}
\lim_{n\to\infty}\nu_n(f)
&=&\lim_{n\to\infty} c_{n,k}\mathcal{L}^{n-k}\nu_k(f)e^{\mathcal{O}(v^1_k)}\\
&=&\nu_k(h)\nu(f)e^{\mathcal{O}(v^1_k)}.
\end{eqnarray*}
In particular for the constant function $f=1$ one has 
$1=\lim_{n\to\infty}\nu_n(1)=\nu_k(h)\nu(1)e^{\mathcal{O}(v^1_k)}$ which implies that
$\nu_k(h)=e^{\mathcal{O}(v^1_k)}$. If we let $k\to\infty$ we obtain that 
$\nu_k(h)\to1$ which implies that in fact $\nu_n\to \nu$ weakly.

Finally we obtain
$$
\hat\alpha_{k+1}
=\lim_{n\to\infty}\sum_{\alpha\in\mathcal A^k}\int_{\Sigma_B}g^\Delta_k(\alpha x)\,d\nu_n(x)
=\sum_{\alpha\in\mathcal A^k}\int g^\Delta_k(\alpha x)\,d\nu(x)
=\int\mathcal{L}^k1(x)\,d\nu(x)
$$
and consequently 
$$
\hat\alpha_{k+1} 
=\nu(\mathcal{L}^k1)
=e^{kP}
$$
since $\mathcal{L}^k(1)=e^{kP}h\nu(1)+R_k$ where $R_k$ is orthogonal to $h$, that is
$\nu(R_k)=0$.
This
 implies that the limiting distribution is P\'olya-Aeppli since  $P=P(\log g^\Delta)$
is negative
which follows from the fact that the pressure of $\hat g$ is zero on the system $(\Sigma_B^2, \hat\sigma)$
and that the topological entropy of $\hat\sigma$ is positive by the $\psi$-mixing property.
\end{proof}

\noindent {\bf Acknowledgements.}
SG thanks the Centre de Physique Théorique of Marseille for hospitality during part of the elaboration of this work. SG also thanks FAPESP  
(19805/2014 and 2017/07084-6) as well as CNPq Universal (439422/2018-3)  for financial support. 
SG would like to thanks Fr\'ed\'eric Paccaut for discussions concerning the  Furstenberg \& Furstenberg example.
NH was supported by Universit\'e de Toulon and the Simons Foundation (ID 526571). The research of SV was supported by the project ``Dynamics and Information Research Institute'' within the agreement between UniCredit Bank and Scuola Normale Superiore di Pisa.

\bibliographystyle{jtbnew} 
\bibliography{NSS_BIB}
\end{document}